\newtheorem{theorem}{Theorem}
\numberwithin{theorem}{section}
\newtheorem{lemma}[theorem]{Lemma}
\newtheorem{prop}[theorem]{Proposition}
\newtheorem{corollary}[theorem]{Corollary}
\newtheorem{claim}[theorem]{Claim}
\let\c@theorem\c@figure
\newtheorem*{prop*}{Proposition}
\theoremstyle{definition}
\newtheorem{example}[theorem]{Example}
\newcommand{\mb}{\mathbb}
\newcommand{\tb}{\textbf}
\newcommand{\Z}{\mathbb{Z}}
\newcommand{\mc}{\mathcal}
\newcommand{\tn}{\textnormal}
\newcommand{\se}{\subseteq}
\newcommand{\til}{\widetilde}
\newcommand{\bs}{\backslash}
\newcommand{\ol}{\overline}
\newcommand{\lam}{\lambda}
\newcommand{\Sym}{\sf{Sym}}
\newcommand{\tcb}{\textcolor{blue}}
\def\multiset#1#2{\ensuremath{\left(\kern-.3em\left(\genfrac{}{}{0pt}{}{#1}{#2}\right)\kern-.3em\right)}}
\title{Power sum expansions for Kromatic symmetric functions using Lyndon heaps}
\author{Laura Pierson \\ \href{mailto:lcpierson73@gmail.com}{lcpierson73@gmail.com}}
\begin{document}

\maketitle

\begin{abstract}
    Crew, Pechenik, and Spirkl \cite{crew2023kromatic} defined the \emph{\tb{\tcb{Kromatic symmetric function}}} $\ol{X}_G$ as a $K$-analogue of Stanley's chromatic symmetric function $X_G$ \cite{stanley1995symmetric}, and one question they asked was how $\ol{X}_G$ expands in their $\ol{p}_\lam$ basis, which they defined as a $K$-analogue of the classic \emph{\tb{\tcb{power sum basis}}} $p_\lam.$ We gave a formula in \cite{pierson2024power} that partially answered this question but did not explain the combinatorial significance of the coefficients. Here, we give combinatorial descriptions for the $\ol{p}$-coefficients of $\ol{X}_G$ and $\omega(\ol{X}_G)$, lifting the $p$-expansion of $X_G$ in terms of \emph{\tb{\tcb{acyclic orientations}}} that was given by Bernardi and Nadeau in \cite{bernardi2020combinatorial}. We also propose an alternative $K$-analogue $\ol{p}'$ of the $p$-basis that gives slightly cleaner expansion formulas. Our expansions are based on \emph{\tb{\tcb{Lyndon heaps}}}, introduced by Lalonde \cite{lalonde1995lyndon}, which are representatives for certain equivalence classes of acyclic orientations on clan graphs of $G$. Additionally, we show that knowing $\ol{X}_G$ is equivalent to knowing the multiset of independence polynomials of induced subgraphs of $G$, which gives shorter proofs of all our results from \cite{pierson2024counting} that $\ol{X}_G$ can be used to determine the number of copies in $G$ of certain induced subgraphs. We also give power sum expansions for the \emph{\tb{\tcb{Kromatic quasisymmetric function}}} $\ol{X}_G(q)$ defined by Marberg in \cite{marberg2023kromatic} in the case where $G$ is the incomparability graph of a unit interval order.
\end{abstract}

\section{Introduction}\label{sec:intro}

Let $G$ be a vertex-weighted graph with vertex set $V$, edge set $E$, and weight function $w:V\to \Z_{>0}.$ The \emph{\tb{\tcb{chromatic symmetric function}}} is $$X_{(G,w)}(x_1,x_2,\dots) \coloneqq \sum_{\kappa\tn{ a proper coloring}}\ \prod_{v\in V}x_{\kappa(v)}^{w(v)},$$ where a \emph{\tb{\tcb{proper coloring}}} $\kappa:V\to \Z_{>0}$ assigns a positive integer color to each vertex such that adjacent vertices get different colors. $X_G$ was introduced by Stanley in \cite{stanley1995symmetric} as a symmetric function analogue for the chromatic polynomial $\chi_G(q),$ and the vertex-weighted version was introduced by Crew and Spirkl in \cite{crew2020deletion}. The \emph{\tb{\tcb{Kromatic symmetric function}}} is $$\ol{X}_{(G,w)}(x_1,x_2,\dots) \coloneqq \sum_{\kappa\tn{ a proper set coloring}}\ \prod_{v\in V} \prod_{i\in \kappa(v)}x_i^{w(v)},$$ where a \emph{\tb{\tcb{proper set coloring}}} $\kappa:V\to 2^{\Z_{>0}}\bs\{\varnothing\}$ assigns a nonempty set of colors to each vertex so adjacent vertices get nonoverlapping color sets. Our focus here will be on the unweighted case where all vertices have weight 1, so we will write simply $X_G$ and $\ol{X}_G$, but we mention the weighted versions to help motivate the \emph{\tb{\tcb{power sum symmetric functions}}} that we will be using.

$\ol{X}_G$ was introduced by Crew, Pechenik, and Spirkl in \cite{crew2023kromatic} as a $K$-analogue of $X_G$ based on the ideas of \emph{\tb{\tcb{combinatorial $K$-theory}}} \cite{buch2005combinatorial}, which involves deforming the cohomology ring of a variety by introducing an extra parameter $\beta$ (usually set to $-1$) to get a new ring structure called the $K$-ring. The authors of \cite{crew2023kromatic} defined $\ol{X}_G$ in the hopes of shedding light on possible geometric interpretations of $X_G$, especially in relation to Gasharov's Schur-positivity result for incomparability graphs of (3+1)-free posets \cite{gasharov1996incomparability}, and the Stanley--Stembridge conjecture that $X_G$ is $e$-positive for the same class of graphs. The Stanley--Stembridge conjecture has since been proven by Hikita \cite{hikita2024proof}, but its geometric meaning is still not fully understood, although Kato \cite{kato2024geometric} recently gave a new geometric interpretation for $X_G$ along with some insight into how it may be related to the Stanley--Stembridge conjecture and its graded version, the Shareshian--Wachs conjecture. 

In addition to the Schur and $e$-bases, $X_G$ also has an interesting expansion in the \emph{\tb{\tcb{power sum basis}}} $p_\lam.$ In Stanley's original paper \cite{stanley1995symmetric}, he gave a $p$-expansion for $X_G$ as a sum over subsets of the edges, $X_G = \sum_{S\se E} (-1)^{|S|}p_{\lam(S)},$ where the parts of $\lam(S)$ are the sizes of the connected components. He also gave modifications of that expansion in terms of the broken circuit complex and in terms of the M\"obius function of the poset of connected subgraphs of $G$. Of more interest to us here, however, is a newer $p$-expansion in terms of acyclic orientations due to Bernardi and Nadeau \cite{bernardi2020combinatorial}, proven using the theory of heaps:

\begin{theorem}[\cite{bernardi2020combinatorial}, Proposition 5.3]\label{thm:bernadi_nadeau}
    $(-1)^{|\lam|-\ell(\lam)}[p_\lam]X_G$ counts acyclic orientations of $G$ such that the sizes of the source components are the parts of $\lam$ in some order.
\end{theorem}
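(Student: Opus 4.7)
The natural starting point is Stanley's edge-subset expansion $X_G = \sum_{S\subseteq E}(-1)^{|S|}p_{\lam(S)}$, which gives
\[ [p_\lam]X_G \;=\; \sum_{\substack{S\subseteq E \\ \lam(S)=\lam}} (-1)^{|S|}. \]
Since $|\lam|=|V|$ and $\ell(\lam)=c(S)$ (the number of components of $(V,S)$), multiplying by $(-1)^{|\lam|-\ell(\lam)}$ converts each summand into $(-1)^{|S|+|V|-c(S)}=(-1)^{b_1(S)}$, the parity of the first Betti number of $(V,S)$. The target identity therefore becomes
\[ \sum_{\lam(S)=\lam} (-1)^{b_1(S)} \;=\; \#\{\text{acyclic orientations of } G \text{ whose source components have sizes } \lam\}. \]

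Next I would group the left-hand sum by the unordered vertex partition $\mc{V}=\{V_1,\ldots,V_k\}$ whose blocks are the components of $(V,S)$; each $V_i$ must induce a $G$-connected subset, so the sum factors as
\[ \sum_{\mc{V}} \prod_{i=1}^{k} \sigma(G|_{V_i}), \qquad \sigma(H) := \sum_{\substack{T\subseteq E(H) \\ (V(H),T)\ \text{connected}}} (-1)^{b_1(T)}. \]
The crucial local identity, which I would prove via Viennot's theory of heaps of pieces (two edges of $H$ commuting iff they share no vertex), is $\sigma(H)=T_H(1,0)$, which by a classical evaluation equals the number of acyclic orientations of $H$ with a unique source at a prescribed vertex (the count being independent of that vertex). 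The heap-theoretic argument runs by applying the Cartier--Foata inversion lemma to rewrite the connected-spanning-subgraph alternating sum as a sum over heaps of edges with a specified minimum piece, and these heaps biject (via their reading words) with the acyclic orientations of $H$ having a unique source at an endpoint of that piece.

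For the global assembly I would fix a linear order on $V$, designate $\min V_i$ as the prescribed source of each block, and orient every cross edge between distinct blocks from the block of larger minimum to the block of smaller minimum. The resulting orientation is acyclic (cross edges form a DAG on the ordered blocks, and within-block orientations are acyclic by hypothesis) and has the $V_i$ as its source components under the natural definition via the greedy ``claim your unclaimed descendants'' process iterated in increasing order of $V$. The main obstacle will be verifying that this furnishes a bijection between (a) pairs $(\mc{V},\{\alpha_i\})$ where $\alpha_i$ is an acyclic orientation of $G|_{V_i}$ with unique source $\min V_i$, and (b) acyclic orientations of $G$ whose source-component size multiset equals $\lam$; in particular, I would check (i) that the cross-edge orientation is in fact forced by the requirement that the $V_i$ be the actual source components (otherwise some block would claim a vertex of another block in the greedy process), and (ii) that the bijection is well-defined and respects the partition statistic. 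Combining these verifications with the local heap identity then completes the proof.
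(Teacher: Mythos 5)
Your proposal is correct, but it takes a genuinely different route from the one the paper offers. The paper states this result with a citation to Bernardi--Nadeau and, in the introduction, only sketches how to \emph{recover} it from the Kromatic expansion of Theorem~\ref{thm:XG_old_ps}: restricting to $\lam \vdash |V|$ forces the Lyndon-heap sizes to match the parts of $\lam$, observes that Lyndon heaps with no repeated vertices are precisely acyclic orientations whose least vertex is the unique source, and reassembles these into a global acyclic orientation by the same cross-edge rule you use. Your argument instead goes bottom-up: Stanley's edge-subset expansion, the sign conversion $(-1)^{|\lam|-\ell(\lam)+|S|} = (-1)^{b_1(S)}$, factoring the alternating sum over the induced vertex partition, the local identity $\sigma(H) = T_H(1,0)$ (Greene--Zaslavsky: the number of acyclic orientations with a unique source at any prescribed vertex of connected $H$), and then the gluing bijection. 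All of these steps check out --- in particular the cross-edge direction is indeed forced (an edge $u \to v$ with $u$ in an earlier source component would put $v$ in that component), and acyclicity of the assembled orientation follows because cross edges strictly decrease the block minimum. Your plan to derive the local identity from Cartier--Foata inversion is essentially Bernardi--Nadeau's own route, though citing the classical Tutte evaluation is also fine. What your route buys is a fully self-contained proof that never touches the Kromatic machinery; what the paper's route buys is a uniform $K$-theoretic lift in which the role of Lyndon heaps generalizing acyclic orientations is made explicit.
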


The notation $[p_\lam]X_G$ denotes the coefficient of $p_\lam$ in the $p$-expansion of $X_G$. The \emph{\tb{\tcb{source components}}} of an acyclic orientation are defined as follows. Fix a total ordering on $V$. The first source component contains the first vertex and all vertices reachable from it by directed paths. After that, each new source component consists of the first vertex not yet visited, and all vertices reachable from it and not yet visited.

A natural question about $\ol{X}_G$ is thus whether there is a $K$-lift of one of these $p$-expansion formulas. The authors of \cite{crew2023kromatic} proposed a $K$-theoretic $\ol{p}$-basis which can be defined by 
\begin{equation}
    1 + \ol{p}_k(x_1,x_2,\dots) \coloneqq \prod_{i\ge 1} (1 + x_i^k), \hspace{1cm}\ol{p}_\lam \coloneqq \ol{p}_{\lam_1}\dots\ol{p}_{\ell(\lam)}.
\end{equation}
As noted in \cite{crew2023kromatic}, we can think of $\ol{p}_k$ as the Kromatic symmetric function for a single weight $k$ vertex, and $\ol{p}_\lam$ as the Kromatic symmetric function for an edgeless graph with vertex weights $\lam_1,\dots,\lam_{\ell(\lam)}.$ One can similarly think of the classic power sum function $p_\lam$ as the chromatic symmetric function for an edgeless graph with vertex weights $\lam_1,\dots,\lam_{\ell(\lam)}.$

The authors of \cite{crew2023kromatic} computed the leading terms of the $\ol{p}$-expansions of $\ol{X}_G$ numerically for some small graphs $G$, saw that the coefficients appeared to be integers, and asked what those integers meant. In \cite{pierson2024power}, we showed that the coefficients are in fact integers and gave an explicit formula for them, but we did not give an answer to what those integers count. We answer that question here using the theory of heaps.

\subsection{Power sum expansions for \texorpdfstring{$\ol{X}_G$}{XG}}

Fix a total order on the vertex set $V$ of $G$. For a composition $\alpha = (\alpha_1,\dots,\alpha_{|V|})$ with $\alpha_1,\dots,\alpha_{|V|} \in \mb{Z}_{\ge 0},$ the \emph{\tb{\tcb{$\boldsymbol{\alpha}$-clan graph}}} $\textsf{Clan}_\alpha(G)$ is formed by blowing up the $i$th vertex of $G$ into $\alpha_i$ vertices for every $i,$ such that vertices are adjacent in $\textsf{Clan}_\alpha(G)$ if they come from either the same vertex or adjacent vertices in $G$. (This differs from the definition in \cite{crew2023kromatic} because we allow for parts of $\alpha$ to be 0.) A \emph{\tb{\tcb{heap}}} $H$ of type $\alpha$ on $G$ is an acyclic orientation on $\textsf{Clan}_\alpha$ and its \emph{\tb{\tcb{size}}} is $|H| \coloneqq |\alpha|=\alpha_1+\dots+\alpha_{|V|}$. Each copy of a vertex in a heap is called a \emph{\tb{\tcb{piece}}}, and pieces corresponding to the same vertex are considered indistinguishable (i.e. swapping two copies of the same vertex gives the same heap.) Heaps were introduced by Viennot in \cite{viennot2006heaps}, based on ideas of Cartier and Foata from \cite{cartier2006problemes}. Lalonde \cite{lalonde1995lyndon} defined \emph{\tb{\tcb{Lyndon heaps}}} as canonical representatives for certain equivalence classes of heaps under an equivalence relation analogous to cyclic shifts of words. Lyndon heaps are analogous to \emph{\tb{\tcb{Lyndon words}}}, which are the lexicographically minimal representatives of equivalence classes of aperiodic words under cyclic shifts.

Lyndon heaps are also closely related to Lie algebras. Specifically, the \emph{\tb{\tcb{free partially commutative Lie algebra}}} on $G$ consists of all combinations of vertices formed by repeatedly taking linear combinations or applying the bracket relation, subject to the relation that nonadjacent vertices commute. The subspace of this Lie algebra spanned by elements formed by repeatedly applying the bracket in a way that uses vertex $i$ $\alpha_i$ times for each $i$ (called the \emph{\tb{\tcb{$\boldsymbol{\alpha}$-grade space}}}) has a basis whose elements correspond to Lyndon heaps on $G$ of type $\alpha.$ The relationship between Lyndon heaps, Lie algebra dimensions, and coefficients of the chromatic polynomials for clan graphs was studied by Arunkumar in \cite{arunkumar2022generalised}, who also draws a connection to Kac--Moody Lie algebras.

Using heaps, we lift Theorem \ref{thm:bernadi_nadeau} to give a combinatorial interpretation for the $\ol{p}$-coefficients of $\ol{X}_G$:

\begin{theorem}\label{thm:XG_old_ps}
    $(-1)^{|\lam|-\ell(\lam)}[\ol{p}_\lam]\ol{X}_G$ counts multisets $\{L_1,\dots,L_{\ell(\lam)}\}$ of Lyndon heaps on $G$ such that each vertex is used by at least one $L_i$, $L_i$ either has size $\lam_i$ or has some even size $\lam_i/2^j$ with $j\ge 1$, and the Lyndon heaps corresponding to repeated odd parts of $\lam$ are all distinct.
\end{theorem}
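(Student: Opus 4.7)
My plan is to combine Bernardi--Nadeau's heap expansion of the ordinary chromatic symmetric function with Lalonde's factorization of heaps into Lyndon heaps, reorganizing the resulting expression so that its $\ol{p}$-expansion produces the multiset description of the theorem. First I would relate $\ol{X}_G$ to ordinary chromatic symmetric functions of clan graphs: since a proper set coloring assigning $|\kappa(v)|=\alpha_v$ colors to $v$ is, after choosing an ordering of each color set, the same as a proper ordinary coloring of $\textsf{Clan}_\alpha(G)$, one obtains
\[
\ol{X}_G \;=\; \sum_{\alpha:\,\alpha_v\ge 1}\frac{1}{\prod_v \alpha_v!}\,X_{\textsf{Clan}_\alpha(G)}.
\]
Expanding each $X_{\textsf{Clan}_\alpha(G)}$ by Theorem~\ref{thm:bernadi_nadeau}, quotienting the resulting sum over acyclic orientations by the $\prod_v S_{\alpha_v}$-action on copies, and applying orbit--stabilizer, yields
\[
\ol{X}_G \;=\; \sum_{H}\frac{(-1)^{|H|-\ell(\lam(H))}}{|\tn{Aut}(H)|}\,p_{\lam(H)},
\]
summed over heaps $H$ on $G$ that use every vertex at least once, where $\lam(H)$ is the partition of source-component sizes.

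Next I would apply Lalonde's heap analog of the Chen--Fox--Lyndon theorem: every heap $H$ factors uniquely as a multiset $\{L_1,\dots,L_k\}$ of Lyndon heaps, and $|\tn{Aut}(H)|=\prod_L m_L!$ where $m_L$ is the multiplicity of $L$. This rewrites the above as a sum over multisets of Lyndon heaps, the automorphism denominator becoming the natural multiset normalization.

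Finally I would translate from $p_{\lam(H)}$ to $\ol{p}$'s using
\[
\log(1+\ol{p}_k) \;=\; \sum_{j\ge 1}\frac{(-1)^{j+1}}{j}\,p_{jk},
\]
which comes directly from $1+\ol{p}_k = \prod_i(1+x_i^k)$. The goal is to show that, after reorganizing, the generating function factors as a product over Lyndon heaps, each $L$ contributing a local factor involving only $\ol{p}_{|L|}, \ol{p}_{2|L|}, \ol{p}_{4|L|}, \dots$: the $\log$-structure at odd sizes contributes a $(1+\text{sign}\cdot\ol{p}_{|L|})$ factor (forcing odd-size Lyndon heaps to appear at most once in the corresponding slots), while the exponential-type structure at the even doublings allows arbitrary multiplicities, matching the description in the theorem along with the sign $(-1)^{|\lam|-\ell(\lam)}$.

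The main obstacle will be this final change-of-basis step. The sign $(-1)^{|H|-\ell(\lam(H))}$ depends on the number of source components of $H$, which in general does not equal the number of Lyndon factors (for instance on $K_2$ the word $baab$ is a tournament with a single source component but two Lyndon factors), so the factors of the Lyndon decomposition do not match up with parts of $\lam$ in any naive way. Moreover, a naive inversion of the $\log$-relation above couples $p_n$ to all $\ol{p}_k$ with $k\mid n$, not only those with $n/k$ a power of $2$. Showing that the alternating-sign structure specific to $\log(1+x)$ (rather than $-\log(1-x)$) combined with the Lyndon-heap reorganization cancels the non-$2^j$ contributions, and that the source-component counts reassemble correctly, is the heart of the proof.
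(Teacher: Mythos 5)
Your proposal identifies reasonable ingredients, but it is an outline with several unresolved gaps rather than a proof, and one of its intermediate claims is incorrect. Let me go through the specific issues.

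First, the formula $\ol{X}_G = \sum_{H}\frac{(-1)^{|H|-\ell(\lam(H))}}{|\tn{Aut}(H)|}p_{\lam(H)}$ is right only in a degenerate sense: the $\prod_v S_{\alpha_v}$-action on acyclic orientations of $\textsf{Clan}_\alpha(G)$ is \emph{free}, since each clan of $\alpha_v$ copies forms a clique and any nontrivial permutation of a clique changes its acyclic orientation, so the stabilizer of every orientation is trivial and $|\tn{Aut}(H)| = 1$. Your subsequent claim ``$|\tn{Aut}(H)| = \prod_L m_L!$'' conflates this stabilizer with the multiplicities in the Lyndon factorization, which play no role here; the Lyndon factorization of a heap is unique and ordered (factors nonincreasing), so there is no residual symmetry to quotient by.

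Second, and more fundamentally, the reorganization step that you yourself flag as ``the heart of the proof'' is exactly the part that does not go through as stated. The Bernardi--Nadeau decomposition partitions a heap $H$ into \emph{source components} relative to a fixed vertex order, while the desired statement is phrased in terms of the \emph{Lyndon factorization} of $H$. These decompositions do not agree even in size distribution, as your own example $baab$ on $K_2$ shows: one source component of size $4$, but Lyndon factors $\{b, aab\}$ of sizes $1$ and $3$. So the sum $\sum_H(-1)^{|H|-\ell(\lam(H))}p_{\lam(H)}$ is grouped by source components, and there is no obvious term-by-term regrouping that turns it into a sum indexed by Lyndon heap multisets with the sign you need. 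Your hope that ``the alternating-sign structure specific to $\log(1+x)$'' will cancel the non-$2^j$ contributions is plausible heuristics, but no cancellation mechanism is exhibited.

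The paper sidesteps both issues. Its main proof (\S 3.4) never invokes Bernardi--Nadeau or source components at all: it starts from $\ol{X}_G = \sum_{W\se V}(-1)^{|V\bs W|}\prod_i I_{G|_W}(x_i)$, writes $I_{G|_W}(x) = \prod_k(1+x^k)^{a_W(k)}$ for exponents $a_W(k)$, takes logarithms and applies M\"obius inversion with the Dirichlet inverse $\hat\mu$ of $(-1)^{d+1}$ (Lemma~\ref{lem:dirichlet}), and uses $k|\mc{L}_{G|_W}(k)| = \sum_{d\mid k}\mu(k/d)|\mc{P}_{G|_W}(d)|$ to identify the $a_W(k)$ as signed sums of Lyndon heap counts. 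This is a \emph{global} computation on the exponents, not a bijection between heap decompositions, which is why the awkward interaction between source components and Lyndon factors never arises. The alternate derivation in \S 5.4 is closer to your plan in starting from $\ol{X}_G = \sum_\alpha \frac{1}{\alpha!}X_{\textsf{Clan}_\alpha(G)}$, but it crucially replaces Bernardi--Nadeau with Athanasiadis's pyramid-decomposition form of the $p$-expansion (Theorem~\ref{thm:athanasiadis}), so that the intermediate formula is indexed by \emph{all ordered lists of pyramids} covering $V$ (normalized by $z_\lam$), not by source components of a single canonical decomposition; this dissolves the size-mismatch obstruction and again feeds directly into the same M\"obius inversion. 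If you want to salvage your approach, you should switch from Bernardi--Nadeau to the pyramid-sum version of the $p$-expansion and then compute the exponents via Dirichlet convolution rather than trying to cancel terms heap by heap.
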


This verifies our statement from \cite[Theorem 1.6]{pierson2024power} about the signs of the $\ol{p}$-coefficients, and also gives a combinatorial explanation for why they are integers. We can roughly visualize the Lyndon heap coverings here and in our other results as being similar to the source component partitions in Theorem \ref{thm:bernadi_nadeau}, except that the Lyndon heaps can overlap while the source components could not, and vertices are allowed to get used multiple times within the same Lyndon heap, source components cannot have repeated vertices.

We can recover Theorem \ref{thm:bernadi_nadeau} from Theorem \ref{thm:XG_old_ps}, since for $\lam \vdash |V|,$ the $p$-coefficients of $X_G$ need to match the $\ol{p}$-coefficients of $\ol{X}_G,$ because $X_G$ consists of the lowest degree terms of $\ol{X}_G$ and $p_\lam$ consists of the lowest degree terms of $\ol{p}_\lam$. For $\lam \vdash |V|,$ the only way that all vertices get used is if each vertex is used in exactly one Lyndon heap, and the heap sizes actually match the parts $\lam_i$ of $\lam$ (since if a heap had size $\lam_i/2^j$ for $j>1,$ the total number of vertices used by all the heaps would be less than $|\lam|=|V|$, meaning not all vertices would get used). Lyndon heaps with no repeat vertices are the same as acyclic orientations where the smallest vertex is the unique source. A collection of such acyclic orientations on subsets of the vertices can be turned into an acyclic orientation on all of $G$ by ordering the source components according to their sources, then directing all remaining edges from later to earlier source components. That gives an acyclic orientation where the sizes of the source component match the parts of $\lam,$ as in Theorem \ref{thm:bernadi_nadeau}.

We can also make Theorem \ref{thm:XG_old_ps} (and our other results below) look more similar to Theorem \ref{thm:bernadi_nadeau} as follows. Heaps have a multiplicative structure given by composition, and every heap has a unique \emph{\tb{\tcb{Lyndon factorization}}} into Lyndon heaps $L_1\circ \dots \circ L_k$ with $L_1\ge \dots \ge L_k$, where the ordering on heaps is defined by identifying heaps with words in a certain way and ordering the words lexicographically. Thus, given a multiset of heaps as in Theorem \ref{thm:XG_old_ps}, we can get a corresponding single heap on $G$ of size $|\lam|$ by replicating each heap of size $\lam_i/2^j$ a total of $2^j$ times, and then composing of all the resulting heaps in reverse lexicographic order. The Lyndon factors of this heap then play the same role as the source components of the acyclic orientation in Theorem \ref{thm:bernadi_nadeau}.

We can also ask about the $\ol{p}$-expansion of $\omega(\ol{X}_G)$. The classic involution $\omega$ on the ring of symmetric functions satisfies $\omega(p_\lam) = (-1)^{|\lam|-\ell(\lam)}p_\lam,$ so Theorem \ref{thm:bernadi_nadeau} immediately gives the $p$-expansion of $\omega(X_G)$, and shows that $\omega(X_G)$ is $p$-positive. For the $\ol{p}$-basis, however, $\omega(\ol{p}_\lam) \ne (-1)^{|\lam|-\ell(\lam)}\ol{p}_\lam,$ so the $\ol{p}$-expansion of $\omega(\ol{X}_G)$ is slightly different, but $\omega(\ol{X}_G)$ still turns out to be $\ol{p}$-positive:

\begin{theorem}\label{thm:omega_XG_old_ps}
    $[\ol{p}_\lam]\omega(\ol{X}_G)$ counts multisets $\{L_1,\dots,L_{\ell(\lam)}\}$ of Lyndon heaps on $G$ such that each vertex is used by at least one $L_i$, $L_i$ has size $\lam_i/2^j$ for some $j\ge 0,$ and the Lyndon heaps corresponding to the same repeated part of $\lam$ are distinct from each other. (The same $L_i$ may occur multiple times if its repeated occurrences correspond to parts of $\lam$ of different sizes.)
\end{theorem}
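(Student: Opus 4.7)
My plan is to deduce Theorem~\ref{thm:omega_XG_old_ps} from Theorem~\ref{thm:XG_old_ps} by applying the involution $\omega$ to the $\ol{p}$-expansion of $\ol{X}_G$. The first step is to compute $\omega(\ol{p}_k)$ in the $\ol{p}$-basis. Since $\log(1+\ol{p}_k) = \sum_{n\ge 1}\frac{(-1)^{n-1}}{n}p_{kn}$, applying $\omega(p_j)=(-1)^{j-1}p_j$ and splitting on the parity of $k$ yields $1+\omega(\ol{p}_k) = \prod_i(1-x_i^k)^{-1}$ for odd $k$ and $1+\omega(\ol{p}_k)=(1+\ol{p}_k)^{-1}$ for even $k$. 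The classical identity $(1-y)^{-1}=\prod_{j\ge 0}(1+y^{2^j})$ rewrites the odd case as $1+\omega(\ol{p}_k)=\prod_{j\ge 0}(1+\ol{p}_{2^j k})$, while the even case expands as the geometric series $(1+\ol{p}_k)^{-1}=\sum_{m\ge 0}(-\ol{p}_k)^m$.

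Applying $\omega$ to the Theorem~\ref{thm:XG_old_ps} expansion and substituting these formulas gives a signed sum indexed by: a partition $\lam$; a Theorem~\ref{thm:XG_old_ps} multiset of Lyndon heaps $(L_1,\dots,L_{\ell(\lam)})$ for $\lam$; and, for each part $\lam_i$, either a finite nonempty subset $S_i\subseteq\{0,1,\dots\}$ (when $\lam_i$ is odd) contributing $\prod_{j\in S_i}\ol{p}_{2^j\lam_i}$, or an integer $m_i\ge 1$ (when $\lam_i$ is even) contributing $(-1)^{m_i}\ol{p}_{\lam_i}^{m_i}$. I then collect by the resulting partition $\mu$ and match these signed contributions to the multisets in Theorem~\ref{thm:omega_XG_old_ps}, where each Lyndon heap $M_k$ of size $s$ is assigned a part $\mu_k=2^{j_k}s$. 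The natural correspondence sends an odd-size Lyndon heap $L$ appearing in the Theorem~\ref{thm:XG_old_ps} data, together with its chosen $S_i$, to the collection of Theorem~\ref{thm:omega_XG_old_ps} pieces consisting of $(L,\mu_k=2^j|L|)$ for each $j\in S_i$; even-size Lyndon heaps are handled via the geometric-series factors.

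The main obstacle is the sign analysis. The signs $(-1)^{|\lam|-\ell(\lam)}$ and $(-1)^{\sum m_i}$ should combine, after a sign-reversing involution on the extraneous terms, to yield exactly $+1$ on the surviving contributions. I expect the involution to act on the even-$k$ data by trading a repeated Lyndon factor in the Theorem~\ref{thm:XG_old_ps} multiset (permitted there since distinctness is only enforced for odd repeats) against an increment of the geometric-series exponent $m_i$, producing opposite signs. The fixed points should be exactly the Theorem~\ref{thm:omega_XG_old_ps} multisets, and the asymmetry in how $\omega$ treats odd versus even $\ol{p}_k$ should mirror precisely the shift from ``distinctness only for odd repeats'' in Theorem~\ref{thm:XG_old_ps} to ``distinctness for same-size repeats, with free repetition across different sizes'' in Theorem~\ref{thm:omega_XG_old_ps}. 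The vertex-covering condition passes through untouched since the underlying Lyndon heaps are preserved by the correspondence.

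Verifying the precise form of this involution, and in particular checking that the newly allowed case of $L_i$ of odd size with associated $\mu_k$ even (forbidden in Theorem~\ref{thm:XG_old_ps} but permitted in Theorem~\ref{thm:omega_XG_old_ps}) appears with coefficient exactly $+1$, will be the technical heart of the argument. This surviving odd-size contribution must come from the $j\ge 1$ terms of $\prod_{j\ge 0}(1+\ol{p}_{2^j k})$ for odd $k=|L|$, while the distinctness constraint on same-size repeats should arise naturally from the fact that each factor $(1+\ol{p}_{2^j k})$ in the product contributes $\ol{p}_{2^j k}$ at most once.
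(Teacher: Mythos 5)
Your plan does not match the paper's approach. The paper proves this theorem directly, not by applying $\omega$ to a known expansion: it derives the factorization $\prod_i H_{G|_W}(x_i) = \prod_k (1+\ol{p}_k)^{b_W(k)}$ with $b_W(k)=\sum_{2^j\mid k}|\mc{L}_{G|_W}(k/2^j)|$ via logarithms and a M\"obius inversion against the Dirichlet inverse of $(-1)^{d+1}$ (Section~3.3), and also gives a bijective proof in Section~4.3 matching binary expansions of color multiplicities with pairs $(L,j)$.

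Your $\omega$-computations are correct: $\omega(1+\ol{p}_k)=1+\ol{p}'_k=\prod_{j\ge0}(1+\ol{p}_{2^jk})$ for $k$ odd, and $\omega(1+\ol{p}_k)=(1+\ol{p}_k)^{-1}$ for $k$ even, which matches what the paper derives in Section~4.4. But there are two genuine issues. First, the logical dependency is reversed: in the paper, Theorem~\ref{thm:XG_old_ps} is proved \emph{after} Theorem~\ref{thm:omega_XG_old_ps}, and its Claim~\ref{claim:a} explicitly invokes Claim~\ref{claim:b}, which belongs to the proof you are trying to produce. Deriving Theorem~\ref{thm:omega_XG_old_ps} from Theorem~\ref{thm:XG_old_ps} would therefore require an independent proof of the latter, which you do not supply. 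Second, and more substantively, the sign-reversing involution that must absorb both the outer $(-1)^{|\lam|-\ell(\lam)}$ and the interior $(-1)^{m_i}$ signs, while turning ``distinctness for repeated odd parts'' plus geometric series into ``distinctness for same-size repeats,'' is not constructed. You have correctly identified it as the technical heart, but an idea of where cancellation should happen is not a proof; one has to check, for instance, that when several equal even parts of $\lam$ carry the same heap $L$ the contributions over all ways of distributing the geometric-series exponents $m_i$ telescope to $0$ unless a single copy survives, and that this does not interfere with the independent choices of $S_i$ for odd parts feeding parts of different sizes in $\mu$. As it stands the argument has a real gap.

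Worth noting: the identity $1+\ol{p}'_k=\prod_{j\ge0}(1+\ol{p}_{2^jk})$ you wrote down already gives a much shorter and sign-free route to the target. It lets you pass directly from the paper's factorization $\prod_i H_{G|_W}(x_i)=\prod_k (1+\ol{p}'_k)^{|\mc{L}_{G|_W}(k)|}$ (Theorem~\ref{thm:omega_XG_new_ps}, which is proved first and is $\ol{p}'$-positive) to $\prod_k(1+\ol{p}_k)^{b_W(k)}$ by collecting exponents of each $\ol{p}_m$ over all $(k,j)$ with $2^jk=m$. That derivation avoids Theorem~\ref{thm:XG_old_ps} entirely, needs no involution, and respects the paper's logical ordering; it would be a cleaner way to realize the core observation behind your proposal.
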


Alternatively, we can get slightly nicer expansions by modifying our $\ol{p}$-basis. Define the $\ol{p}'$ basis by \begin{equation}\label{eqn:new_p_def}
    1+\ol{p}'_k(x_1,x_2,\dots) \coloneqq \prod_{i\ge 1}\frac1{1-x_i^k} = \prod_{i\ge 1}(1 + x_i^k + x_i^{2k} + \dots), \hspace{1cm} \ol{p}'_\lam \coloneqq \ol{p}'_{\lam_1}\dots\ol{p}'_{\lam_{\ell(\lam)}}.
\end{equation} We can think of $\ol{p}'_k$ as the generating series for colorings of a weight $k$ vertex where each color is allowed to be used any number of times as long as the vertex gets at least one color.

Now we get:

\begin{theorem}\label{thm:XG_new_ps}
    $(-1)^{|\lam|-\ell(\lam)}[\ol{p}_\lam']\ol{X}_G$ counts multisets $\{L_1,\dots,L_{\ell(\lam)}\}$ of Lyndon heaps on $G$ such that each vertex is used by at least one $L_i,$ $L_i$ can have size $\lam_i$ or $\lam_i/2$ if $\lam_i/2$ is odd, and the Lyndon heaps corresponding to repeated odd parts of $\lam$ are distinct.
\end{theorem}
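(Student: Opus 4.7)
The plan is to derive Theorem \ref{thm:XG_new_ps} from Theorem \ref{thm:XG_old_ps} via a change of basis, using the key identity
\[1 + \ol{p}'_k = \prod_{j \ge 0}(1 + \ol{p}_{k\cdot 2^j}),\]
which follows from the telescoping $\frac{1}{1-y} = \prod_{j \ge 0}(1+y^{2^j})$ applied with $y = x_i^k$ and multiplied over $i$. An equivalent local form is $1 + \ol{p}_k = (1+\ol{p}'_k)/(1+\ol{p}'_{2k})$.

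I would first reorganize the sum in Theorem \ref{thm:XG_old_ps} by fixing the underlying multiset $\mathcal{L}$ of Lyndon heaps covering $V(G)$ and letting the inner sum range over part-size assignments and heap multiplicities. Since the covering condition depends only on $\mathcal{L}$, the inner sum factors as a product over distinct Lyndon heaps $L$. Each even-size $L$ of size $s$ contributes $\prod_{j\ge 0}(1+\ol{p}_{s\cdot 2^j})^{-1} = (1+\ol{p}'_s)^{-1}$: its allowed parts $\lam_i \in \{s, 2s, 4s, \dots\}$ repeat freely, and the per-part sign $(-1)^{\lam_i - 1} = -1$ is absorbed into the geometric series $\sum_{n \ge 0}(-\ol{p}_{s\cdot 2^j})^n$. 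Each odd-size $L$ of size $s$ contributes $1+\ol{p}_s$, since only $\lam_i = s$ is allowed and it appears at most once by odd-part distinctness.

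Next I would apply $1 + \ol{p}_s = (1+\ol{p}'_s)/(1+\ol{p}'_{2s})$ to each odd-size factor. Combinatorially, the $(1+\ol{p}'_s)$ piece represents pairing $L$ with $\lam_i = s$ at most once, while the $(1+\ol{p}'_{2s})^{-1} = \sum_{m \ge 0}(-\ol{p}'_{2s})^m$ piece represents pairing $L$ with $\lam_i = 2s$ arbitrarily many times. This exactly reproduces the enumeration in Theorem \ref{thm:XG_new_ps}: a Lyndon heap of odd size $s$ pairs with $\lam_i \in \{s, 2s\}$, and a Lyndon heap of even size $s$ pairs only with $\lam_i = s$. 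The odd-part distinctness condition transfers directly, because the new option $\lam_i = 2s$ is an even part and hence imposes no distinctness constraint.

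The main obstacle will be bookkeeping the per-part signs $(-1)^{\lam_i-1}$ and verifying that they distribute into the per-heap factors exactly so that the identity above applies coefficient-by-coefficient. I expect this to be routine once the parity case-split is made explicit, with the identity $1 + \ol{p}_k = (1+\ol{p}'_k)/(1+\ol{p}'_{2k})$ doing the essential combinatorial work of converting ``even $L$ can expand into $\lam_i = s\cdot 2^j$'' into ``odd $L$ can expand into $\lam_i \in \{s, 2s\}$.''
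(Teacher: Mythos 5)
Your approach is correct but genuinely different from the paper's. The paper proves Theorem~\ref{thm:XG_new_ps} independently and directly: starting from the identity $\ol{X}_G = \sum_{W\se V}(-1)^{|V\bs W|}\prod_i I_{G|_W}(x_i)$, it finds exponents $c_W(k)$ with $\prod_k(1-x_i^k)^{-c_W(k)} = I_{G|_W}(x_i)$ by taking logarithms, using $\ln I_{G|_W}(-x) = -P_{G|_W}(x)$, and applying classical M\"obius inversion to relate $c_W(k)$ to Lyndon heap counts via the identity $\sum_{d\mid n}d\,|\mc{L}_{G|_W}(d)| = |\mc{P}_{G|_W}(n)|$. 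It does not derive Theorem~\ref{thm:XG_new_ps} from Theorem~\ref{thm:XG_old_ps} (in fact the paper proves Theorem~\ref{thm:XG_new_ps} first). Your change-of-basis route via the telescoping identity $1+\ol{p}'_k = \prod_{j\ge 0}(1+\ol{p}_{k\cdot 2^j})$, equivalently $1+\ol{p}_k = (1+\ol{p}'_k)/(1+\ol{p}'_{2k})$, is a nice alternative: it makes the structural relationship between the two theorems transparent (each odd-size heap's $\{s,2s\}$ option in Theorem~\ref{thm:XG_new_ps} is exactly the local expansion of $1+\ol{p}_s$, while each even-size heap's $\{s,2s,4s,\dots\}$ option in Theorem~\ref{thm:XG_old_ps} collapses telescopically to a single $\ol{p}'_s$ factor), and it sidesteps the separate M\"obius inversion computation. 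One presentational wrinkle: once you fix the set $\mc{L}$ of distinct Lyndon heaps covering $V(G)$, the per-heap generating functions should enforce ``used at least once,'' so they are $\ol{p}_s$ and $(1+\ol{p}'_s)^{-1}-1$ rather than $1+\ol{p}_s$ and $(1+\ol{p}'_s)^{-1}$ as you wrote; this does not affect the conclusion, since both sides of your identity shift by the same constant, but it should be stated. A cleaner way to sidestep this entirely is to work at the level of the factorization $\ol{X}_G = \sum_W(-1)^{|V\bs W|}\prod_k(1+\ol{p}_k)^{a_W(k)}$ already established for Theorem~\ref{thm:XG_old_ps} (where the covering constraint is absorbed into the alternating sum over $W$) and verify via your telescoping identity that $\prod_k(1+\ol{p}_k)^{a_W(k)} = \prod_k(1+\ol{p}'_k)^{c_W(k)}$ term by term, after which the combinatorial reading of the $\ol{p}'$-coefficients proceeds as in the paper.
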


We get a cleaner expansion for $\omega(\ol{X}_G)$, which is $\ol{p}'$-positive:

\begin{theorem}\label{thm:omega_XG_new_ps}
    $[\ol{p}_\lam']\omega(\ol{X}_G)$ counts sets $\{L_1,\dots,L_{\ell(\lam)}\}$ of distinct Lyndon heaps on $G$ such that each vertex is used by at least one $L_i,$ and $L_i$ has size $\lam_i.$ 
\end{theorem}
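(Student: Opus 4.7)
The plan is to prove the generating function identity
\[\omega(\ol{X}_G)\;=\;\sum_{S}\,\prod_{L\in S}\ol{p}'_{|L|},\qquad(\dagger)\]
where $S$ ranges over \emph{sets} of distinct Lyndon heaps on $G$ whose combined vertex supports equal $V$. Grouping by the partition $\lambda$ whose parts form the multiset $\{|L|:L\in S\}$ then gives Theorem~\ref{thm:omega_XG_new_ps}. Two standard identities drive the proof: the independence-polynomial expression for $\ol{X}_G$ obtained by reorganizing set colorings by color class, and the Cartier--Foata/Lyndon factorization identity for heaps.

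First, I reorganize $\ol{X}_G$ by color. A proper set coloring of $G$ is the data of a sequence of independent sets $(I_1,I_2,\dots)$ in $G$ whose union is $V$, weighted by $\prod_i x_i^{|I_i|}$. Dropping the covering condition factors as $\tilde X_G:=\prod_i I(G;x_i)$, where $I(G';t):=\sum_{I\text{ indep in }G'}t^{|I|}$ is the independence polynomial. Möbius inversion over $V'\subseteq V$ yields $\ol{X}_G=\sum_{V'}(-1)^{|V\setminus V'|}\prod_i I(G[V'];x_i)$.

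Next, I establish the key lemma: for every graph $G'$,
\[\omega\Bigl(\prod_i I(G';x_i)\Bigr)\;=\;\prod_{L\text{ Lyndon on }G'}\bigl(1+\ol{p}'_{|L|}\bigr).\qquad(\star)\]
Take logs: $\log\prod_i I(G';x_i)=\sum_k c_kp_k$ with $c_k=[t^k]\log I(G';t)$. Since $\omega(p_k)=(-1)^{k-1}p_k$, re-exponentiating gives $\omega(\prod_i I(G';x_i))=\prod_i 1/I(G';-x_i)$. Now apply the heap identity of Viennot and Lalonde for the trace monoid on $V(G')$ (with adjacency as the non-commutation relation), which combines Cartier--Foata inversion with unique Lyndon decomposition:
\[\sum_{H\text{ heap on }G'}z^H\;=\;\frac1{I(G';-z)}\;=\;\prod_{L\text{ Lyndon on }G'}\frac1{1-z^L},\]
with $z^L=\prod_v z_v^{\alpha_v(L)}$. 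Specializing $z_v=t$ gives $I(G';-t)=\prod_L(1-t^{|L|})$; setting $t=x_i$ and taking $\prod_i$ yields $\prod_i 1/I(G';-x_i)=\prod_L(1+\ol{p}'_{|L|})$, which proves $(\star)$.

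Finally, substituting $(\star)$ into the $\omega$-image of the Step~1 formula gives
\[\omega(\ol{X}_G)=\sum_{V'\subseteq V}(-1)^{|V\setminus V'|}\prod_{L\text{ Lyndon on }G[V']}\bigl(1+\ol{p}'_{|L|}\bigr).\]
Expanding each factor as a choice of including $L$ in a set $S$ and swapping sums, the inner sum $\sum_{V'\supseteq U(S)}(-1)^{|V\setminus V'|}$ (where $U(S)$ is the union of the vertex supports of $L\in S$) vanishes unless $U(S)=V$, in which case it equals $1$. This yields $(\dagger)$. The one non-routine ingredient is the heap identity $I(G';-z)=\prod_L(1-z^L)$; the main care is to verify that the paper's conventions (acyclic orientations of clan graphs as trace-monoid representatives, with commutation cliques being exactly independent sets of $G'$) line up with the single- and multi-variable forms I use, and that Lyndon heaps indexed by partitions correctly collect into the partition $\lambda$ at the end. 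The Möbius cancellation and logarithmic manipulation are bookkeeping.
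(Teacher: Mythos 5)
Your proof is correct and follows essentially the same route as the paper: start from the M\"obius-inverted expression $\ol{X}_G=\sum_{W\subseteq V}(-1)^{|V\setminus W|}\prod_i I_{G|_W}(x_i)$, apply $\omega$ term-by-term via the ``plug in $-x_i$ and take reciprocals'' lemma, factor each $\prod_i H_{G|_W}(x_i)$ as $\prod_k(1+\ol{p}'_k)^{|\mc{L}_{G|_W}(k)|}$, and let the alternating sum kill sets of Lyndon heaps not covering all of $V$. The only cosmetic difference is that you invoke the multivariate heap identity $1/I_{G'}(-z)=\prod_L 1/(1-z^L)$ directly from Viennot--Lalonde, whereas the paper re-derives the exponent $|\mc{L}_{G|_W}(k)|$ from the pyramid-counting relation $\sum_{k\mid n}k|\mc{L}(k)|=|\mc{P}(n)|$ via logarithms and M\"obius inversion in \S 3.1, and then separately gives the direct Lyndon-factorization argument you use in \S 4.1.
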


Equivalently, $[\ol{p}_\lam']\omega(\ol{X}_G)$ counts heaps on $G$ such that the factors in its Lyndon factorization are all distinct, and their sizes are the parts of $\lam$ in some order.

\subsection{Counting induced subgraphs using \texorpdfstring{$\ol{X}_G$}{XG}}

There exist nonisomorphic graphs with the same chromatic symmetric function \cite{orellana2014graphs,stanley1995symmetric}. However, an extensively studied but still open question about $X_G$ is whether it is always different for different trees \cite{aliste2024chromatic, crew2022note, heil2018algorithm, martin2008distinguishing, wang2024class}. $\ol{X}_G$ contains more information than $X_G$ and is known to distinguish some graphs that $X_G$ cannot \cite{crew2023kromatic}, but it is unknown whether it distinguishes all graphs or even all trees. In \cite{pierson2024counting}, we conjectured that $\ol{X}_G$ does distinguish all graphs and gave evidence toward that conjecture by showing that the number of copies in $G$ of certain induced subgraphs can be recovered from $\ol{X}_G$, by setting up large systems of linear equations based on the $\ol{\til{m}}$-expansion of $\ol{X}_G$ from \cite{crew2023kromatic}. As a corollary of our proofs here, we show the following:

\begin{corollary}\label{cor:subgraphs}
    Knowing the Kromatic symmetric function $\ol{X}_G$ of a graph is equivalent to knowing the multiset of independence polynomials of induced subgraphs of $G$.
\end{corollary}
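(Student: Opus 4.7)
The plan is to relate $\ol{X}_G$ to the independence polynomials of induced subgraphs via an inclusion-exclusion identity, and then to recover the multiset by a linear-independence argument on the product plethysms $P \mapsto \prod_i P(x_i)$. First I would rewrite a proper set coloring $\kappa$ of $G$ in terms of the independent sets $S_i := \{v \in V : i \in \kappa(v)\}$, so that $\ol{X}_G$ is the generating function over sequences $(S_1, S_2, \dots)$ of independent sets in $G$ weighted by $\prod_i x_i^{|S_i|}$, subject to $\bigcup_i S_i = V$. Since independent sets of $G$ contained in $U$ are exactly the independent sets of $G[U]$, relaxing the covering constraint to $\bigcup_i S_i \se U$ yields the generating function $\prod_{i \ge 1} I_{G[U]}(x_i)$, and Möbius inversion on $2^V$ then produces
\[
\ol{X}_G \;=\; \sum_{U \se V} (-1)^{|V|-|U|}\, \prod_{i \ge 1} I_{G[U]}(x_i).
\]

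The direction ``multiset $\Rightarrow \ol{X}_G$'' is immediate from this formula, since $|U| = [x^1] I_{G[U]}$ and $|V|$ can be read either as the cardinality of the multiset or as the minimum total degree of a monomial in $\ol{X}_G$. For the converse, I would regroup by polynomial, writing $\ol{X}_G = \sum_Q n_Q \prod_{i \ge 1} Q(x_i)$ with $n_Q := (-1)^{|V|-[x^1]Q}\,\#\{U \se V : I_{G[U]} = Q\}$, and then invoke
\[
\prod_{i \ge 1} Q(x_i) \;=\; \exp\!\left( \sum_{k \ge 1} \gamma_k(Q)\, p_k \right), \qquad \sum_{k \ge 1} \gamma_k(Q)\, x^k \;=\; \log Q(x),
\]
valid for any polynomial $Q$ with $Q(0) = 1$. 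Distinct such $Q$ have distinct logarithms, hence distinct sequences $(\gamma_k(Q))_k$, so $\ol{X}_G$ becomes a finite $\Z$-linear combination of exponentials of distinct linear forms in the algebraically independent generators $p_1, p_2, \dots$ of $\Sym \otimes \Q$. A Vandermonde-style linear-independence argument then pins down each $n_Q$ uniquely from $\ol{X}_G$; dividing out by the explicit sign $(-1)^{|V|-[x^1]Q}$ finally recovers the unsigned multiplicities $\#\{U : I_{G[U]} = Q\}$, which reconstruct the multiset.

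The principal obstacle is the linear-independence step: for finitely many distinct vectors $\gamma^{(1)}, \dots, \gamma^{(r)} \in \Q^{\N}$, I need the formal series $\exp\!\left(\sum_k \gamma_k^{(s)} p_k\right) \in \Q[[p_1, p_2, \dots]]$ to be linearly independent over $\Q$. I expect this to reduce cleanly by specializing $p_k \mapsto \lambda_k$ and appealing to the standard linear independence of the exponential characters $\lambda \mapsto e^{\langle \gamma^{(s)}, \lambda\rangle}$, or equivalently by extracting monomial coefficients $\prod_k p_k^{n_k}$ and solving a Vandermonde system in a coordinate that separates the $\gamma^{(s)}$.
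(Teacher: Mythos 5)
Your proof is correct and takes a genuinely different route from the paper. You start from the same identity $\ol{X}_G = \sum_{U\se V}(-1)^{|V\setminus U|}\prod_i I_{G[U]}(x_i)$ (which is the paper's equation (\ref{eqn:XG})), but then argue injectivity of the map (multiset of $I_{G[U]}$'s) $\mapsto \ol{X}_G$ via the plethystic exponential $\prod_i Q(x_i) = \exp\bigl(\sum_k \gamma_k(Q)\,p_k\bigr)$ with $\log Q(x) = \sum_k \gamma_k(Q)x^k$, reducing the problem to linear independence of exponentials of distinct linear forms in the algebraically independent $p_k$. Your observation that the sign $(-1)^{|V|-|U|}$ depends only on $Q$ through $[x^1]Q = |U|$ is the right way to avoid having to know $U$ itself. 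The paper instead applies $\omega$, invokes its already-computed factorization $\prod_i H_{G|_W}(x_i) = \prod_k(1+\ol{p}_k)^{b_W(k)}$ with nonnegative integer exponents $b_W(k)$, and runs a finite-dimensional triangularity argument: the monomials $\ol{p}''_\lam := \prod_k(1+\ol{p}_k)^{i_k(\lam)}$ form a basis of a truncated subspace by an upper-triangular change of basis from $\ol{p}_\lam$, so the exponent sequences $(b_W(k))_k$ (and hence $H_{G|_W}$, hence $I_{G|_W}$) are read off directly. Your approach buys conceptual economy — no $\omega$-twist, no $\ol{p}$-basis, no finite-dimensional truncation — at the cost of needing the linear-independence-of-exponentials lemma, which you correctly flag as the main thing to nail down; it does go through by the truncation-plus-Vandermonde reduction you sketch. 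The paper's approach is more pedestrian but entirely elementary and re-uses machinery already built for Theorem \ref{thm:omega_XG_old_ps}; it is also somewhat more explicit about how to actually extract the multiset from $\ol{X}_G$ computationally. Both establish the same underlying fact, namely that the elements $\prod_i Q(x_i)$ (equivalently, after $\omega$, the $\ol{p}''_\lam$) indexed by distinct monic $Q$ are linearly independent.
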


In particular, for any $H$ such that no nonisomorphic graph has the same independence polynomial as $H$, the number of induced copies of $H$ in $G$ can be determined from $\ol{X}_G$. This includes all the induced subgraphs we showed could be counted in \cite{pierson2024counting} (seven graphs on 4 vertices, eleven graphs on 5 vertices, and all graphs that are a union of a star and some isolated vertices). It also includes cliques, cliques minus an edge, paths of length 2 and all odd lengths (based on \cite{beaton2018independence}), and \emph{\tb{\tcb{threshold graphs}}} (based on \cite{stevanovic1997clique}), which are graphs with no induced $P_4$, $C_4$, or $\ol{C_4}$, or equivalently, graphs formed by repeatedly adding an isolated vertex or a vertex connected to all previous vertices. For $T$ a tree, \cite{levit2008roots} implies that $\ol{X}_T$ also counts induced subgraphs isomorphic to each \emph{\tb{\tcb{well-covered spider}}}, i.e. induced subgraphs with at most one vertex of degree at least three and all maximal independent sets the same size.

\subsection{Quasisymmetric power sum expansions}

Again, fix a total ordering on $V$. Shareshian and Wachs \cite{shareshian2016chromatic} defined the \emph{\tb{\tcb{chromatic quasisymmetric function}}} as $$X_G(q) \coloneqq \sum_{\kappa\tn{ a proper coloring}} q^{\tn{asc}(\kappa)}\prod_{v\in V}x_{\kappa(v)},$$ where $\tn{asc}(\kappa)$ counts \emph{\tb{\tcb{ascents}}} of $\kappa,$ i.e. pairs of vertices $u,v\in V$ with $u < v$ and $\kappa(u) < \kappa(v).$ $X_G(q)$ is not symmetric in general, but it is in the case of \emph{\tb{\tcb{incomparability graphs of unit interval orders}}}, which are a subset of the graphs for which the Stanley--Stembridge conjecture applies, and also are precisely the graphs for which Kato's geometric realization of chromatic quasisymmetric functions in \cite{kato2024geometric} applies. For this unit interval case, Athanasiadis \cite{athanasiadis2014power} gave the following $p$-expansion formula for $X_G(q)$:

\begin{theorem}[{Athanasiadis \cite[Corollary 7]{athanasiadis2014power}}]\label{thm:athanasiadis}
    If $G$ is the incomparability graph of a unit interval order, $$\omega(X_G(q)) = \sum_{\lam \vdash |V|} \frac{p_\lam}{z_\lam} \sum_{\pi\in \Pi_\lam}\  \sum_{o\in \mc{AO}^*(G,\pi)}q^{\tn{asc}(o)}.$$
\end{theorem}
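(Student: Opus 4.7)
The plan is to derive the expansion through the intermediate fundamental quasisymmetric basis. First, I would bundle the proper colorings contributing to $X_G(q)$ by the acyclic orientation they induce: each proper coloring $\kappa$ of $G$ determines an orientation $o_\kappa$ by directing each edge from the smaller color to the larger, with ties broken by the chosen total order on $V$. For unit interval graphs this bundling produces a fundamental expansion of Shareshian--Wachs type
\[
X_G(q) = \sum_{o \in \mc{AO}(G)} q^{\tn{asc}(o)}\, F_{\tn{Des}(o)},
\]
where $F_\alpha$ is Gessel's fundamental quasisymmetric function and $\tn{Des}(o)$ is the descent composition of $o$ relative to the order on $V$. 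This step uses only the $P$-partition machinery and the fact that for unit interval $G$, $\tn{asc}(\kappa)$ depends only on $o_\kappa$.

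Next, applying the involution $\omega$ and using $\omega(F_\alpha) = F_{\alpha^c}$ for the complementary composition gives
\[
\omega(X_G(q)) = \sum_{o \in \mc{AO}(G)} q^{\tn{asc}(o)}\, F_{\tn{Des}(o)^c}.
\]
To convert this into a $p_\lam/z_\lam$ expansion, I would invoke the set-partition refinement identity $p_\lam/z_\lam = \sum_{\pi \in \Pi_\lam} F_{\alpha(\pi)}$ (for suitable compositions $\alpha(\pi)$ determined by the blocks of $\pi$ read in a fixed order), extract coefficients, and reorganize the double sum so that $\mc{AO}^*(G,\pi)$ appears as the collection of orientations whose descent composition is compatible with $\pi$ in exactly the sense Athanasiadis specifies.

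The main technical obstacle I anticipate is the final combinatorial matching: proving that the pairs $(o,\pi)$ surviving the coefficient extraction are precisely those with $o \in \mc{AO}^*(G,\pi)$, with the correct $q^{\tn{asc}(o)}$ weight. The unit interval hypothesis is indispensable in two ways here --- it guarantees $X_G(q)$ is symmetric so the fundamental expansion assembles into power sums at all, and it forces the source-component structure of an acyclic orientation to interact cleanly with the ambient vertex order. I expect to need a sign-reversing involution on the "bad" pairs, where $\pi$ fails to refine the source-component partition of $o$, in order to collapse the enumeration down to $\mc{AO}^*(G,\pi)$. Once this combinatorial bookkeeping is complete, the identity claimed in the theorem falls out. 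An alternative route, closer in spirit to the heap-based proofs of Theorems 1.2--1.5 in this paper, would replace acyclic orientations by Lyndon heap factorizations and track the ascent statistic through the Cartier--Foata commutation classes; the main obstacle there would be showing that ascents are constant on heap equivalence classes for unit interval $G$.
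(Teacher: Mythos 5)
The statement you are trying to prove is not actually proved in this paper: it is quoted from Athanasiadis \cite[Corollary 7]{athanasiadis2014power}. The only original content the paper adds is the parenthetical remark explaining how to pass from Athanasiadis's sink formulation to the source formulation used here, by reversing all edge directions (so $q^k \mapsto q^{|E|-k}$) and invoking the palindromicity of the $p_\lam/z_\lam$ coefficients from \cite{shareshian2016chromatic}. Your proposal, by contrast, is an attempt to re-derive the full theorem from scratch, and its general architecture --- Shareshian--Wachs fundamental expansion, apply $\omega$, convert fundamentals to power sums --- does track Athanasiadis's actual proof in the cited paper. So you are not reproducing what this paper does; you are sketching the proof from the reference it defers to.

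That said, the sketch as written has a concrete gap at the first step. You assert
$X_G(q) = \sum_{o \in \mc{AO}(G)} q^{\tn{asc}(o)} F_{\tn{Des}(o)}$,
but an acyclic orientation does not carry a single descent composition. Each orientation $o$ induces a poset $P_o$ on $V$, and by $P$-partition theory its contribution is a sum of fundamentals over the linear extensions of $P_o$:
$X_G(q) = \sum_{o \in \mc{AO}(G)} q^{\tn{asc}(o)} \sum_{\sigma \in \mathcal{L}(P_o)} F_{\tn{Des}(\sigma)}$.
(Equivalently, the sum is indexed by permutations $\sigma \in \mathfrak{S}_{|V|}$ with a $G$-inversion statistic, as in Shareshian--Wachs Theorem 3.1; the bijection between permutations and pairs $(o, \sigma)$ with $\sigma$ a linear extension of $P_o$ regroups this into your form.) Omitting the inner sum makes the subsequent coefficient extraction fall apart, since that sum over linear extensions is exactly the object you must pair against the set-partition identity for $p_\lam/z_\lam$ when you carry out the matching. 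You also do not address the sink versus source convention at all; Athanasiadis's Corollary 7 uses sinks, and if you followed your plan literally you would need to either prove the source version directly or include the direction-reversal and palindromicity argument that the paper sketches. Neither issue is fatal to the overall strategy, but as written the key identity is wrong, and the ``technical obstacle'' you flag at the end is real and is where the bulk of Athanasiadis's work actually lives.
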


In this statement, $\Pi_\lam$ is the set of \emph{\tb{\tcb{block decompositions}}} of $V$ of type $\lam,$ meaning partitions of $V = B_1 \sqcup \dots \sqcup B_{\ell(\lam)}$ with $|B_i| = \lam_i,$ and $\mc{AO}^*(G,\pi)$ is the set of acyclic orientations of $G$ such that the restriction to each $B_i$ has a unique source and for $i<j$, all edges between $B_i$ and $B_j$ are directed towards $B_j.$ (Athanasiadis actually stated this using sinks instead of sources, but we use sources here for consistency with \cite{bernardi2020combinatorial} and our other results here. It can be shown to hold for sources as well by switching the directions of all edges in each acyclic orientation, which replaces each $q^k$ term with a $q^{|E|-k}$ term since each ascent becomes a non-ascent and vice versa, and then by using the fact from \cite{shareshian2016chromatic} that the coefficient of each $p_\lam/z_\lam$ is a palindromic polynomial in $q.$)

Marberg \cite{marberg2023kromatic} defined two different quasisymmetric analogues for $\ol{X}_G$. We will focus here on his second version, $$\ol{X}_G(q) \coloneqq \sum_{\kappa\tn{ a proper set coloring}} q^{\tn{asc}(\kappa)} \prod_{v\in V}\prod_{i\in \kappa(v)} x_i,$$ where $\tn{asc}(\kappa)$ counts quadruples $(u,v,i,j)$ with $u,v\in V,$ $u<v,$ $i\in \kappa(u), j\in \kappa(v)$, and $i<j.$ Marberg shows that $\ol{X}_G(q)$, like $X_G(q)$, is symmetric when $G$ is the incomparability graph of a unit interval order. This means that in those cases it must have an expansion in the $\ol{p},$ $\omega(\ol{p})$, $\ol{p}',$ and $\omega(\ol{p}')$ bases, and those expansions must specialize to Theorems \ref{thm:XG_old_ps}, \ref{thm:omega_XG_old_ps}, \ref{thm:XG_new_ps}, and \ref{thm:omega_XG_new_ps} when $q=1,$ since $\ol{X}_G(1) = \ol{X}_G.$ Thus, a natural question is what those expansions are. They are not especially simple, but we can still give explicit formulas for them, which we derive in \S \ref{sec:quasi} by finding transition formulas between the ordinary $p$-basis and the $K$-theoretic $p$-bases, and then combining those transition formulas with Theorem \ref{thm:athanasiadis}. Specializing those expansions to $q=1$ gives us an alternative proof for Theorems \ref{thm:XG_old_ps}, \ref{thm:omega_XG_old_ps}, \ref{thm:XG_new_ps}, and \ref{thm:omega_XG_new_ps}.

\bigskip

Another question one might ask is whether there is some sort of quasisymmetric power sum expansion for the other Kromatic quasisymmetric function $\ol{L}_G(q)$ that Marberg defined in \cite{marberg2023kromatic}. Unlike $\ol{X}_G(q)$, $\ol{L}_G(q)$ is symmetric in very few cases, which means we cannot expect it to have expansions in our $\ol{p},$ $\omega(\ol{p})$, $\ol{p}',$ or $\omega(\ol{p}')$ bases, since they are all symmetric. However, one might hope that it has an expansion in some quasisymmetric $K$-analogue $\ol{\Psi}_\alpha$ of the quasisymmetric power sum basis $\Psi_\alpha$ studied by Alexandersson and Sulzgruber in \cite{alexandersson2021p}, ideally lifting the expansion formula from \cite{alexandersson2021p} for $X_G(q)$ in the $\Psi_\alpha$ basis. From our attempts to find such a quasisymmetric power sum expansion for $\ol{L}_G(q),$ however, we think that the $\psi_\alpha$-expansion of $X_G(q)$ from \cite{alexandersson2021p} may not have such a $K$-analogue.

\subsection{Organization}

The remainder of this paper is organized as follows:
\begin{itemize}
    \item In \S \ref{sec:background}, we introduce relevant background on graphs, symmetric functions, and heaps.
    \item In \S \ref{sec:dirichlet}, we derive Theorems \ref{thm:XG_old_ps}, \ref{thm:omega_XG_old_ps}, \ref{thm:XG_new_ps}, and \ref{thm:omega_XG_new_ps} using M\"obius inversion and properties of heap generating series. We give the proofs in the reverse order of the statements above (i.e. Theorem \ref{thm:omega_XG_new_ps}, then \ref{thm:XG_new_ps}, then \ref{thm:omega_XG_old_ps}, then \ref{thm:XG_old_ps}) so that the easier proofs are first. Then we prove Corollary \ref{cor:subgraphs} in \S \ref{sec:subgraphs}.
    \item In \S\ref{sec:direct}, we give simpler bijective explanations for the key factorizations that are used in the proofs of our expansion formulas in \S \ref{sec:dirichlet}. Again, we give the proofs in the order of Theorem \ref{thm:omega_XG_new_ps}, then \ref{thm:XG_new_ps}, then \ref{thm:omega_XG_old_ps}, then \ref{thm:XG_old_ps} so that the simpler proofs are first.
    \item Finally, in \S\ref{sec:quasi}, we derive quasisymmetric power sum expansion formulas for $\ol{X}_G(q)$ in the case where $G$ is the incomparability graph of a unit interval order, and we show how they specialize to Theorems \ref{thm:XG_old_ps}, \ref{thm:omega_XG_old_ps}, \ref{thm:XG_new_ps}, and \ref{thm:omega_XG_new_ps} at $q=1.$
\end{itemize} 

\section{Background}\label{sec:background}

\subsection{Graphs}

A \emph{\tb{\tcb{graph}}} $G = (V,E)$ is a set $V$ of vertices and a set $E$ of \emph{\tb{\tcb{edges}}}, where each edge is an unordered pair of distinct vertices. We will assume our graphs are unweighted and have no multiple edges. Two vertices are \emph{\tb{\tcb{adjacent}}} if there is an edge between them, and an \emph{\tb{\tcb{independent set}}} is a subset of $V$ containing no two adjacent vertices. For a subset $W\se V,$ the \emph{\tb{\tcb{induced subgraph}}} $G|_W$ is the graph with vertex set $W$ and edges $\{vw:v,w\in W\tn{ and }vw \in E\}.$ Two graphs $G = (V,E)$ and $G' = (V',E')$ are \emph{\tb{\tcb{isomorphic}}} if there is a bijection $\phi:V\to V'$ such that $vw\in E$ if and only if $\phi(v)\phi(w)\in E'.$

An \emph{\tb{\tcb{acyclic orientation}}} assigns a direction $v\to w$ to each edge $vw$ such that there are no directed cycles $v_1\to v_2\to \dots \to v_k \to v_1$ where each $v_i\to v_{i+1}$ is a directed edge. A \emph{\tb{\tcb{source}}} of an acyclic orientation is a vertex with no edges directed towards it.

A \emph{\tb{\tcb{partially ordered set (poset)}}} $P$ is a set together with an ordering relation $\le$ applying to some ordered pairs of elements such that $\le$ is reflexive, antisymmetric and transitive: that is, $v \le v$ for all $v$, the only way that $v\le w$ and $w \le v$ is if $v=w,$ and if both $u \le v$ and $v\le w$ then $u\le w.$ We write $v<w$ if $v\le w$ and $v\ne w.$ Two elements $v,w\in P$ are \emph{\tb{\tcb{incomparable}}} if $v\not \le w$ and $w\not \le v.$ The \emph{\tb{\tcb{incomparability graph}}} $G$ of $P$ is the graph with vertex set $P$ and edges $vw \in E$ if and only if $v$ and $w$ are incomparable in $P$. For $a,b\in \Z_{>0},$ we say that $P$ is \emph{\tb{\tcb{$\boldsymbol{(a+b)}$-free}}} if there are no chains $v_1< \dots < v_a$ and $w_1 < \dots < w_b$ in $P$ with every $v_i$ incomparable to every $w_j.$ A \emph{\tb{\tcb{unit interval order}}} is a poset that is both $(2+2)$-free and $(3+1)$-free. Equivalently, $P$ is a unit interval order if and only if there is a map $\phi:P\to \mathbb{R}$ such that $v < w$ in $P$ if and only if $\phi(v) + 1 < \phi(w)$, so $v$ and $w$ are incomparable if and only if $\phi(v)$ and $\phi(w)$ are within 1 unit of each other.

\subsection{Symmetric functions}

A \emph{\tb{\tcb{symmetric function}}} is a power series in a countable set of variables $x_1,x_2,\dots$ that is invariant under any permutation of the variables. The symmetric functions form a ring $\Sym$, and bases for this ring are generally indexed by partitions, where a \emph{\tb{\tcb{partition}}} $\lam$ is a list $(\lam_1,\dots,\lam_{\ell(\lam)})$ of positive integers with $\lam_1\ge \dots \ge \lam_{\ell(\lam)}.$ The $\lam_i$'s are the \emph{\tb{\tcb{parts}}} of $\lam,$ $\ell(\lam)$ is the \emph{\tb{\tcb{length}}} or number of parts, and $|\lam| \coloneqq \lam_1+\dots+\lam_{\ell(\lam)}$ is the \emph{\tb{\tcb{size}}}. We write $i_k(\lam)$ to denote the \emph{\tb{\tcb{multiplicity}}} of $k$ as a part of $\lam,$ i.e. the number of parts of $\lam$ that are equal to $k,$ and we define $$z_\lam \coloneqq \prod_{k\ge 1} k^{i_k(\lam)}\cdot i_k(\lam)!.$$ The main basis for $\Sym$ that we will be interested in here is the \emph{\tb{\tcb{power sum symmetric functions}}} $$p_n(x_2,x_2,\dots) \coloneqq x_1^n+x_2^n+\dots,\hspace{1cm}p_\lam \coloneqq p_{\lam_1}\dots p_{\lam_{\ell(\lam)}}.$$ We will also use two other bases in some of our proofs. The \emph{\tb{\tcb{elementary symmetric functions}}} are $$e_n(x_1,x_2,\dots) \coloneqq \sum_{i_1< \dots <i_n} x_{i_1}\dots x_{i_n},\hspace{1cm}e_\lam \coloneqq e_{\lam_1}\dots e_{\lam_{\ell(\lam)}},$$ so $e_n$ is the sum of all products of $n$ distinct variables. The \emph{\tb{\tcb{homogeneous symmetric functions}}} are $$h_n(x_1,x_2,\dots) \coloneqq \sum_{i_1\le \dots \le i_n} x_{i_1}\dots x_{i_n},\hspace{1cm}h_\lam \coloneqq h_{\lam_1}\dots h_{\lam_{\ell(\lam)}},$$ so $h_n$ is the sum of all products of $n$ variables with repetition allowed. The \emph{\tb{\tcb{involution $\boldsymbol{\omega}$}}} on $\Sym$ is a ring homomorphism satisfying $\omega(p_\lam) = (-1)^{|\lam|-\ell(\lam)}p_\lam$ and $\omega(h_\lam) = e_\lam.$

A \emph{\tb{\tcb{quasisymmetric function}}} is a power series in $x_1,x_2,\dots$ such that for any fixed composition $\alpha = (\alpha_1,\dots,\alpha_n),$ all monomials of the form $x_{i_1}^{\alpha_1}\dots x_{i_n}^{\alpha_n}$ have the same coefficient as long as $i_1 < \dots < i_n$. All symmetric functions are quasisymmetric, and much of the combinatorics of symmetric functions can be extended to quasisymmetric functions, but we will not go into that here since the quasisymmetric functions we will be interested in will actually be symmetric.

The chromatic symmetric function $X_G,$ the Kromatic symmetric function $\ol{X}_G$, the Kromatic quasisymmetric function $\ol{X}_G(q),$ and our two $K$-analogues $\ol{p}_\lam$ and $\ol{p}'_\lam$ for the $p$-basis were already defined in \S \ref{sec:intro}, so we do not repeat their definitions here.

\subsection{Heaps}

We defined a \emph{\tb{\tcb{heap}}} $H$ in \S \ref{sec:intro} as an acyclic orientation of an $\alpha$-clan graph of $G$. The copies of each vertex are called \emph{\tb{\tcb{pieces}}} and the \emph{\tb{\tcb{size}}} $|H|$ is the number of pieces. We write $\mc{H}_G$ for the set of all heaps on $G$, and $\mc{H}_G(n)$ for the set of heaps of size $n$ on $G$.

A heap of type $\alpha$ can also be thought of as the equivalence class of all words with alphabet $V$ where vertex $i$ shows up $\alpha_i$ times for every $i$ and $v$ comes before $w$ in the word whenever $v\to w$ is a directed edge in the acyclic orientation. The equivalence relation is thus that nonadjacent vertices can always be moved past each other in the word. These equivalence classes of words over all choices of $\alpha$ form a \emph{\tb{\tcb{free partially commutative monoid}}} where the operation is composition of words, i.e. $H_1\circ H_2$ is the equivalence class of words that can be written as a word for $H_1$ followed by a word for $H_2.$ The monoid is partially commutative because two letters commute if and only if the corresponding vertices are nonadjacent in $G$. A heap is \emph{\tb{\tcb{connected}}} if the corresponding $\alpha$-clan graph is connected. A heap is \emph{\tb{\tcb{periodic}}} with period $k$ if it can be written as the $k$-fold composition $H^k = H\circ \dots \circ H$ of a heap with itself for any $k>1,$ and the heap is \emph{\tb{\tcb{aperiodic}}} otherwise.

A \emph{\tb{\tcb{pyramid}}} is a heap where the corresponding acyclic orientation has a unique source. We write $\mc{P}_G$ for the set of pyramids on $G$, and $\mc{P}_G(n)$ for the set of pyramids of size $n$ on $G$.

A \emph{\tb{\tcb{rotation}}} operation on pyramids can be defined as follows. For any piece $p\in H,$ the \emph{\tb{\tcb{pyramid determined by $\boldsymbol{p}$}}} in $H$ is the set of vertices reachable from $p$ by a directed path. For any $p\in H,$ $H$ can be written in the form $U\circ V$ where $V$ is the pyramid of $p.$ Lalonde \cite{lalonde1995lyndon} defined $A_p(H) \coloneqq V\circ U$ as the heap formed by interchanging $U$ and $V$. The new heap $U\circ V$ may no longer be a pyramid, but he shows that if this operation is repeated with the same $p,$ then $A_p^n(H)$ eventually stabilizes to a fixed pyramid $A_p^\infty(H),$ which is the rotation of the original pyramid that has $p$ as the new source. Furthermore, if a pyramid starts with $p_1$ as the source and is rotated to have $p_2$ as the source, and then rotated again to have $p_3$ as the source, the result is the same as if the original pyramid was directly rotated to have $p_3$ as the source. Thus, this rotation operation defines an \emph{\tb{\tcb{equivalence relation on pyramids}}}.

All pyramids are connected since there must be a path from the source to all other vertices. Lalonde \cite{lalonde1995lyndon} shows that within each equivalence class, either the pyramids for different pieces are all distinct (including for different pieces corresponding to the same vertex) and aperiodic, or else all pyramids in the equivalence class are periodic with the same period. This fact will be important for us later.

Given an ordering on the set of vertices, the \emph{\tb{\tcb{standard word}}} $\tn{St}(H)$ associated to a heap is the lexicographically maximal word in its equivalence class. There is then a total ordering on heaps given by $H_1 \ge H_2$ if $\tn{St}(H_1)\ge \tn{St}(H_2).$ Within each equivalence class of aperiodic pyramids defined above, the \emph{\tb{\tcb{Lyndon heap}}} is the minimal pyramid in the equivalence class under this ordering. We write $\mc{L}_G$ for the set of Lyndon heaps on $G$, and $\mc{L}_G(n)$ for the set of Lyndon heaps of size $n$ on $G$.

We give an example now to help illustrate these definitions, and in particular how to find the equivalence class and the Lyndon heap for a given pyramid:

\begin{example}
    Let $G = P_3$ be the 3-vertex path, with vertices labeled as 1, 2, and 3 from left to right. Suppose we start with the following pyramid, which has two pieces coming from vertex 1, and the piece coming from vertex 2 as the unique source:
    \begin{center}
        \includegraphics[width=3.5cm]{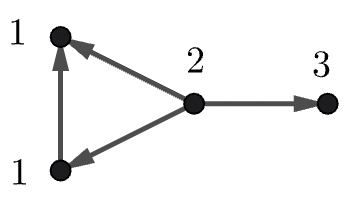}
    \end{center}
    The possible words associated to this pyramid are 2113, 2131, and 2311, since the 2 must come before the other pieces, but the 3 commutes with the 1's since they are nonadjacent vertices. The standard word is $\boxed{2311}$, since it is lexicographically maximal among the words for this pyramid. Given any word $w$, write $[w]$ for the heap corresponding to the equivalence class of $w,$ so our current heap is $[2311].$

    The equivalence class of rotations of $[2311]$ will consist of 4 pyramids, one with each of the 4 pieces as the source. To find these 4 rotations of $[2311]$, we will first rotate so that 3 becomes the source. To do so, we move 3 and all vertices reachable from it (of which there are none) to the end of the word, giving the word 2113. Then we take the 3 and everything following it (in this case, nothing), and move that to the start of the word,  giving the new word $A_3([2311])=A_3([2113]) = [3211]$. The corresponding heap is shown below:
    \begin{center}
        \includegraphics[width=3.5cm]{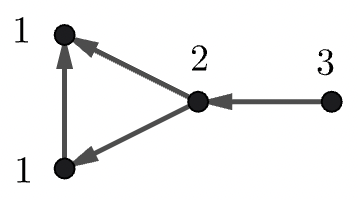}
    \end{center}
    This heap is already a pyramid with 3 as the source, which means that applying $A_3$ to it again will keep it the same, so in fact $A_3([2311])=A_3^2([2311])=\dots=A_3^\infty([2311])=[3211].$ Thus, $[3211]$ is the pyramid in the equivalence class that has 3 as the source. Note also that $\boxed{3211}$ is the only possible word associated to this pyramid, hence it is the standard word.

    Next, we need to move each of the 1's to the base to find the other two pyramids in the equivalence class. To help distinguish the two 1's, we will sometimes write $1_b$ for the bottom 1 and $1_t$ for the top 1 (although the actual words will just use the same letter 1 twice, and the heap would be considered the same if we swapped $1_b$ with $1_t$). First we will rotate so that $1_b$ becomes the source. To apply $A_{1_b}$ to $[3211]=[321_b1_t],$ we move the $1_b1_t$ part to the front, giving $A_{1_b}([321_b1_t])=[1_b1_t32]=[1132].$ This gives the following heap:
    \begin{center}
        \includegraphics[width=3.5cm]{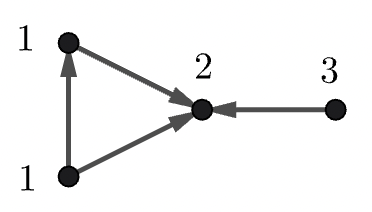}
    \end{center}
    This heap is not a pyramid, because $1_b$ and 3 are both sources. Its associated words are 1132, 1312, and 3112, so its standard word is 3112. Since it is not yet a pyramid, we apply $A_{1_b}$ a second time to try to it make it so that $1_b$ is the unique source. First, we again move $1_b$ and all pieces reachable from it (in this case, $1_b1_t2$), to the end of the word, and then we move that portion to the start. This gives $A_{1_b}([31_b1_t2])=[1_b1_t23]=[1123].$ We get the heap shown below:
    \begin{center}
        \includegraphics[width=3.5cm]{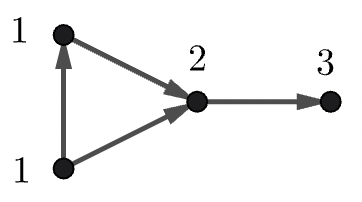}
    \end{center}
    This is now a pyramid with $1_b$ as the unique source, so it is the pyramid in the equivalence class that has $1_b$ as the source. There, is only one possible word, 1123, so $\boxed{1123}$ is the standard word for this pyramid.

    Finally, we rotate so that $1_t$ becomes the source. To apply $A_{1_t}$ to $[1_b1_t23]$, we move the $1_t23$ portion of the word $1_b1_t23$ to the start, giving the word $1_t231_b$, so $A_{1_t}([1_b1_t23])=[1_t231_b]$. The heap is shown below:
    \begin{center}
        \includegraphics[width=3.5cm]{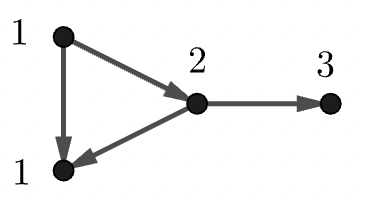}
    \end{center}
    This heap is a pyramid with $1_t$ as the unique source, so we do not need to apply $A_{1_t}$ again. The possible words for this pyramid are 1213 and 1231, so $\boxed{1231}$ is the standard word.

    Finally, to determine which of the 4 heaps in the equivalence class is the Lyndon heap, we compare the 4 standard words 2311, 3211, 1123, and 1231. Since 1123 is lexicographically minimal among these standard words, $\boxed{[1123]}$ is the Lyndon heap. 
    
    This example shows that having a piece from the smallest vertex as the source is a necessary but not sufficient condition for a heap to be a Lyndon heap, since both $[1123]$ and $[1231]$ have one of the 1's as the source, but only $[1123]$ os a Lyndon heap. Note also that the 4 rotations are not just cyclic shifts of the associated standard words, since 3211 is not a cyclic shift of the other standard words.
\end{example}

Next, we give an example of how some the $\ol{p}$-coefficients for a specific graph can be interpreted in terms of Lyndon heaps:

\begin{example}
    In \cite{pierson2024power}, we listed the first few terms in the $\ol{p}$-expansion for the Kromatic symmetric function $\ol{X}_{K_2}$ of the 2-vertex clique $K_2$, consisting of 2 vertices with an edge between them: 
    \begin{align*}
        \ol{X}_{K_2} &= (- \ol{p}_2 + \ol{p}_{11}) + (2\ol{p}_3 - 2\ol{p}_{21}) + (-4\ol{p}_4 + 4 \ol{p}_{31} + \ol{p}_{22} - \ol{p}_{211}) \\
        &\ \ \ + (6\ol{p}_5 - 8 \ol{p}_{41} - 2\ol{p}_{32} + 2 \ol{p}_{311} + 2\ol{p}_{221}) + \cdots.
    \end{align*}
    We can now interpret these terms in terms of Lyndon heaps. Label the two vertices as 1 and 2. First we list the possible Lyndon heaps of each size from 1 to 5:
    \begin{enumerate}
        \item $[1]$ and $[2]$ are the 2 possible size 1 Lyndon heaps. 
        \item $[12]$ is the only size 2 Lyndon heap. $[11]$ and $[22]$ are not Lyndon heaps since they are periodic, and $[21]$ is also not a Lyndon heap since it is a rotation of $[12]$ and $[12]$ has a lexicographically smaller standard word. 
        \item The 2 possible size 3 Lyndon heaps are $[112]$ and $[122],$ since $[111]$ and $[222]$ are periodic, and all other size 3 pyramids are rotations of either $[112]$ or $[122].$ 
        \item The 3 possible size 4 Lyndon heaps are $[1112],$ $[1122]$, and $[1222].$ Note that $[1212]$ is not allowed since it is periodic.
        \item The 6 possible size 5 Lyndon heaps are $[11112],$ $[11122],$ $[11222],$ $[11212],$ $[12122],$ and $[12222].$
    \end{enumerate}
    For $K_2$ (and in fact, for all cliques), Lyndon heaps are actually the same as Lyndon words, i.e. aperiodic words that are lexicographically minimal among their rotations. This is because for heaps on cliques, all vertices are adjacent and so none of them can move past each other in the words, meaning there is actually only one possible word associated to each heap.
    
    From Theorem \ref{thm:XG_old_ps}, we can associate each $\ol{p}$-term in $\ol{X}_{K_2}$ to a collection of Lyndon heap coverings of $K_2$ where the heap sizes match the parts, such that for even parts, the heap sizes may be divided by a power of 2 as long as they stay even, and for odd parts, no Lyndon heaps may be repeated (while repeated even Lyndon heaps are allowed). We can ignore the signs, since we know that the $\ol{p}_\lam$ term has sign $(-1)^{|\lam|-\ell(\lam)}$. The Lyndon heap coverings associated to each of the $\ol{p}$-terms above are shown in the table below:
    $$\begin{array}{|cc|cc|}
    \hline
        -\ol{p}_2 & \{[12]\} & \ol{p}_{11} & \{[1],[2]\} \\ \hline 2\ol{p}_3 & \{[112]\},\{[122]\} & -2\ol{p}_{21} & \{[12],[1]\},\{[12],[2]\} \\
        \hline
        -4\ol{p}_4 & \{[1112]\},\{[1122]\},\{[1222]\},\{[12]\} & 4\ol{p}_{31} & \{[112],[1]\},\{[112],[2]\},\{[122],[1]\},\{[122],[2]\} \\ \hline
        \ol{p}_{22} & \{[12],[12]\} & -\ol{p}_{211} & \{[12],[1],[2]\} \\ \hline
        6\ol{p}_5 & \{[11112]\},\{[11122]\},\{[11212]\}, & -8\ol{p}_{41} & \{[1112],[1]\},\{[1112],[2]\},\{[1122],[1]\},\{[1122],[2]\}, \\
        & \{[11222]\},\{[12122]\},\{[12222]\} && \{[1222],[1]\},\{[1222],[2]\},\{[12],[1]\},\{[12],[2]\} \\ \hline
        -2\ol{p}_{32} & \{[112],[12]\},\{[122],[12]\} & 2\ol{p}_{311} & \{[112],[1],[2]\},\{[122],[1],[2]\} \\ \hline
        2\ol{p}_{221} & \{[12],[12],[1]\},\{[12],[12],[2]\} && \cdots \\ \hline
    \end{array}$$
    Note that in cases where the partition has a part of size 4, we can use a Lyndon heap of size 2 instead of a Lyndon heap of size 4, since $4/2^1=2$ is even. Note also that we see some instances where the size 2 heap $[12]$ is repeated, while the size 1 heaps $[1]$ and $[2]$ never get repeated, since even-sized heaps are allowed to be repeated while odd-sized heaps are not.
\end{example}

We will now introduce several heap generating series, which will be used throughout our proofs. A \emph{\tb{\tcb{trivial heap}}} is a heap where all vertices in the corresponding clan graph are nonadjacent, or equivalently, all pieces commute. Note that a trivial heap must use each vertex of $G$ at most once, since multiple copies of the same vertex are automatically connected in the clan graph. Trivial heaps are equivalent to independent sets in $G$. We write $\mc{T}_G$ for the set of trivial heaps on $G$ and $\mc{T}_G(n)$ for set of trivial heaps of size $n$ on $G$.

If the variables $\mathbf{x} = (x_1,\dots,x_{|V|})$ correspond to the vertices of $G$ and $H$ is a heap of type $\alpha,$ write $\mathbf{x}^{H} \coloneqq x_1^{\alpha_1}\dots x_{|V|}^{\alpha_{|V|}}.$ Then one can define power series $$H_G(t) \coloneqq \sum_{H\in \mc{H}_G}t^{|H|}\mathbf{x}^{H},\hspace{1cm}I_G(t)\coloneqq \sum_{T\in\mc{T}_G}t^{|T|}\mathbf{x}^{T},\hspace{1cm}P_G(t) \coloneqq \sum_{P\in \mc{P}_G}\frac{t^{|P|}\mathbf{x}^{P}}{|P|}$$ as the generating series for all heaps, trivial heaps, and pyramids, respectively. We use the notation $I_G(t)$ for the generating series for trivial heaps because it is the same as the \emph{\tb{\tcb{independence polynomial}}} whose coefficients count independent sets of each size.

When restricted to $|V|$ variables, the classic power series $$H(t) \coloneqq \sum_{n\ge 0} t^n h_n,\hspace{1cm}E(t) \coloneqq \sum_{n\ge 0}t^n e_n,\hspace{1cm}P(t) \coloneqq \sum_{n\ge 1} \frac{t^np_n}{n}$$ from symmetric function theory for the homogeneous, elementary, and power sum symmetric functions are the special case of $H_G(t)$, $I_G(t)$, and $P_G(t)$ where $G$ has no edges. This is because if $G$ is edgeless, then heaps are arbitrary multisets of $n$ vertices (corresponding to $h_n$), trivial heaps are sets of $n$ vertices with no repeats (corresponding to $e_n$), and pyramids are sets of the same vertex $n$ times, since otherwise there would be multiple sources (corresponding to $p_n$). The classic identities satisfied by $H(t)$, $E(t)$, and $P(t)$ are also satisfied by these new power series, namely, 
\begin{equation}\label{eqn:heap_series}
    H_G(t) = \frac1{I_G(-t)},\hspace{1cm}P_G(t) = \ln (H_G(t)) = -\ln(-I_G(t)),
\end{equation}
and these identities will be very important for our proofs. We will also use the identities 
\begin{equation}\label{eqn:H_E_factors}
    H(t) = \prod_{i\ge 1}\frac1{1-tx_i} = \prod_{i\ge 1}(1+tx_i+t^2x_i^2+\dots),\hspace{1cm}E(t) = \prod_{i\ge 1} (1+tx_i),
\end{equation}
which imply the special case $H(t) = 1/E(-t)$ of $H_G(t) = 1/I_G(-t).$ Additionally, we will use the fact that 
\begin{equation}\label{eqn:H_E_omega}
    \omega(H(t)) = \sum_{n\ge 0} t^n\omega(h_n) = \sum_{n\ge 0}t^n e_n = E(t).
\end{equation}
In what follows, we will not need variables to track vertices and so we will set those variables to 1, giving $$H_G(t) = \sum_{n\ge 0} |\mc{H}_G(n)|\cdot t^n,\hspace{1cm} I_G(t)=\sum_{n\ge 0}|\mc{T}_G(n)|\cdot t^n,\hspace{1cm}P_G(t) = \sum_{n\ge 1}\frac{|\mc{P}_G(n)|\cdot t^n}{n}.$$ We introduced the vertex variables just to help draw the connection with the power series $H(t),$ $E(t)$, and $P(t)$ for symmetric functions.

\section{Proofs of the expansion formulas}\label{sec:dirichlet}

Our proofs will start from our equation in \cite{pierson2024power}:
\begin{equation}\label{eqn:XG}                      
    \ol{X}_G(x_1,x_2,\dots) = \sum_{W \se V} (-1)^{|V\bs W|} \prod_{i\ge 1}I_{G|_W} (x_i).
\end{equation}

\begin{proof}[Proof of (\ref{eqn:XG})]
    $I_{G}(x_i)$ encodes ways to choose an independent set of vertices to assign color $x_i$ to. Multiplying over all colors corresponds to assigning every color to an independent set of vertices. The problem, however, is that some vertices might not get any colors. Taking an alternating sum over subsets $W\se V$ fixes that problem by canceling all terms for colorings where not all vertices get colored. To see this, note that colorings involving all vertices do not get canceled because they only show up for $W=V.$ For a coloring where $x_i$ is the first color missing, we can use a sign-reversing involution to pair up and cancel all terms corresponding to that coloring, because each subset $W$ containing all used vertices but not $x_i$ can be paired with the subset $W \cup \{x_i\},$ which still contains all used vertices.
\end{proof}

Our other key fact is that applying $\omega$ to both sides gives \begin{equation}\label{eqn:omega_XG}
    \omega(\ol{X}_G)(x_1,x_2,\dots) = \sum_{W\se V} (-1)^{|V\bs W|} \prod_{i\ge 1} H_{G|_W}(x_i),
\end{equation} 
which follows from the fact that $H_{G|_W}(x) = 1/I_{G|_W}(-x)$ and the following lemma from \cite{bernardi2020combinatorial} (which we will also use in \S \ref{sec:direct} to compute the $\omega(\ol{p})$ and $\omega(\ol{p}')$ bases):

\begin{lemma}[\cite{bernardi2020combinatorial}]\label{lem:omega}
    For any polynomial $F(x)$ with constant term 1, $$\omega\left(\prod_{i\ge 1}F(x_i)\right) = \prod_{i\ge 1}\frac1{F(-x_i)}.$$
\end{lemma}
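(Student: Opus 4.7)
My plan is to reduce the identity to the classical generating-series relation $\omega(E(t)) = H(t)$ (which is the companion of (\ref{eqn:H_E_omega}) obtained by applying $\omega$ to both sides) by factoring $F$ into linear factors. Since $F(x)$ is a polynomial with constant term $1$, over an algebraic closure of the coefficient field I can write $F(x) = \prod_{j=1}^{d}(1 + \alpha_j x)$, where $d = \deg F$ and the $\alpha_j$ are determined by the roots of $F$.

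Substituting $x = x_i$ and multiplying over $i$ gives $\prod_{i\ge 1} F(x_i) = \prod_{j=1}^{d} E(\alpha_j)$, with $E(t) = \prod_i(1 + tx_i)$. Then I apply $\omega$ factor by factor (it is a ring homomorphism), using $\omega(E(\alpha_j)) = H(\alpha_j) = \prod_i(1-\alpha_j x_i)^{-1}$, and interchange the order of the two products:
\begin{equation*}
\omega\left(\prod_{i\ge 1} F(x_i)\right) = \prod_{j=1}^{d} H(\alpha_j) = \prod_{i\ge 1}\frac{1}{\prod_{j=1}^{d}(1-\alpha_j x_i)} = \prod_{i\ge 1}\frac{1}{F(-x_i)},
\end{equation*}
where the last equality is the factorization of $F(-x_i)$.

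The main subtlety I expect is that the $\alpha_j$ may lie in a proper algebraic extension of the base field, so one must verify that $\omega$ and the identity $\omega(E(t)) = H(t)$ extend consistently after adjoining these scalars; this is harmless because the final answer, being symmetric in the $\alpha_j$, is a polynomial in the coefficients of $F$, but it still requires a brief justification. An alternative that avoids extending scalars is to take formal logarithms of both sides: writing $\log F(x) = \sum_{k\ge 1} c_k x^k$, one gets $\log\prod_i F(x_i) = \sum_k c_k p_k$, and applying $\omega(p_k) = (-1)^{k-1} p_k$ produces $\sum_k c_k (-1)^{k-1} p_k$, which matches $-\log\prod_i F(-x_i) = -\sum_k c_k(-1)^k p_k$; exponentiating yields the lemma. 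The tradeoff is that this version requires working in a completion of $\Sym$ rather than in $\Sym$ itself.
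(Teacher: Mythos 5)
Your main argument is essentially the same as the paper's proof: factor $F$ into linear factors over an algebraic closure, apply the relation $\omega(E(\alpha_j)) = H(\alpha_j)$ to each linear factor, and recombine the products over $i$ (you even have the $E$/$H$ labels matching (\ref{eqn:H_E_factors}), whereas the paper's displayed chain swaps them, though its conclusion is the same since $\omega$ interchanges $H$ and $E$). The scalar-extension caveat you raise and the alternative proof via $\log\prod_i F(x_i) = \sum_k c_k p_k$ and $\omega(p_k)=(-1)^{k-1}p_k$ are both sound, but the paper simply works over the closure without comment and does not include the logarithmic variant.
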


\begin{proof}
    Working over the algebraic closure of the underlying field, we can find a factorization $$F(x) = (1+t_1x)\dots(1+t_kx).$$ By (\ref{eqn:H_E_factors}) and (\ref{eqn:H_E_omega}), for each $t_j$ we have $$\omega\left(\prod_{i\ge 1}(1+t_jx_i)\right) = \omega(H(t_j)) = E(t_j) = \prod_{i\ge 1}\frac1{1-t_jx_i}.$$ Multiplying over $t_j,$ we get $$\omega\left(\prod_{i\ge 1}F(x_i)\right) = \prod_{j=1}^k \omega\left(\prod_{i\ge 1}(1+t_jx_i)\right) = \prod_{j=1}^k \prod_{i\ge 1}\frac1{1-t_jx_i} = \prod_{i\ge 1}\frac1{(1-t_1x_i)\dots(1-t_kx_i)} = \prod_{i\ge 1} \frac1{F(-x_i)},$$ as claimed.
\end{proof}

The idea of all our proofs will be to factor the $\prod_{i\ge 1}I_{G|_W}(x_i)$'s or $\prod_{i\ge 1}H_{G|_W}(x_i)$'s into a product of factors of the form $(1+\ol{p}_k)$ or $(1+\ol{p}'_k)$. We will get four formulas
\begin{align*}
    \ol{X}_G = \sum_{W\se V}(-1)^{|V\bs W|}\prod_{k \ge 1}(1+\ol{p}_k)^{a_W(k)}, \\
    \omega(\ol{X}_G) = \sum_{W\se V}(-1)^{|V\bs W|}\prod_{k \ge 1}(1+\ol{p}_k)^{b_W(k)}, \\
    \ol{X}_G = \sum_{W\se V}(-1)^{|V\bs W|}\prod_{k \ge 1}(1+\ol{p}'_k)^{c_W(k)}, \\
    \omega(\ol{X}_G) = \sum_{W\se V}(-1)^{|V\bs W|}\prod_{k \ge 1}(1+\ol{p}'_k)^{d_W(k)},
\end{align*}
and then we will interpret the resulting coefficients combinatorially. (The first of the above formulas is the same one we gave in \cite{pierson2024power}.) As in \S\ref{sec:direct}, we will prove our results in the opposite order from the statements in \S \ref{sec:intro} and in the above list, as the later ones are easier to prove. In \S \ref{sec:subgraphs} we will use one of these expansion formulas to prove Corollary \ref{cor:subgraphs}.

\subsection{Proof of Theorem \ref{thm:omega_XG_new_ps}: \texorpdfstring{$\ol{p}'$}{p'}-expansion of \texorpdfstring{$\omega(\ol{X}_G)$}{omega(XG)}}\label{sec:omega_XG_new_ps}

Since $\omega(\ol{X}_G)$ is a sum of products of $H_{G|_W}$'s by (\ref{eqn:omega_XG}), we will find a factorization for each $W$ of the form $$\prod_{k\ge 1}(1 + \ol{p}'_k)^{d_W(k)} = \prod_{i\ge 1} H_{G|_W}(x_i).$$ Since $1+\ol{p}'_k = \prod_{i\ge 1}1/(1 - x_i^k),$ we can factor both sides over the $x_i$ variables and just pull out a single $x_i,$ so the problem reduces to finding a factorization of the form
\begin{equation}\label{eqn:H_new_p_factors}
    \prod_{k\ge 1} \frac1{(1-x_i^k)^{d_W(k)}} = H_{G|_W}(x_i),
\end{equation} 
where the unknown part is the exponents $d_W(k).$

\begin{claim}\label{claim:d}
    $d_W(k) = |\mc{L}_{G|_W}(k)|$ counts Lyndon heaps of size $k$ on $G|_W.$
\end{claim}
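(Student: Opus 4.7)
The plan is to take logarithms of both sides of the proposed factorization
$$\prod_{k\ge 1} \frac{1}{(1-x_i^k)^{d_W(k)}} = H_{G|_W}(x_i)$$
and then match coefficients, using the heap identity $P_{G|_W}(t) = \ln H_{G|_W}(t)$ from (\ref{eqn:heap_series}). Substituting this on the right and expanding $-\ln(1-x_i^k) = \sum_{m\ge 1}x_i^{mk}/m$ on the left yields
$$\sum_{k\ge 1} d_W(k) \sum_{m\ge 1}\frac{x_i^{mk}}{m} = \sum_{n\ge 1} \frac{|\mc{P}_{G|_W}(n)|}{n}x_i^n.$$
Extracting the coefficient of $x_i^n$ (writing $n=mk$) and multiplying by $n$ gives the equivalent claim
$$|\mc{P}_{G|_W}(n)| \;=\; \sum_{k\mid n} k\cdot d_W(k).$$
So it suffices to prove the combinatorial identity $|\mc{P}_{G|_W}(n)| = \sum_{k\mid n} k \cdot |\mc{L}_{G|_W}(k)|$, as this uniquely determines $d_W(k)$ by M\"obius inversion on the divisor poset.

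To establish this identity, I would partition the set $\mc{P}_{G|_W}(n)$ according to the rotation-equivalence classes described in the Background section. By Lalonde's structure theorem, every pyramid $P$ of size $n$ is either aperiodic, in which case its equivalence class contains exactly $n$ distinct pyramids (one rotation per piece) and corresponds to a unique Lyndon heap of size $k=n$; or else $P = Q^{n/k}$ for some aperiodic pyramid $Q$ of size $k$ with $k\mid n$ and $k<n$, in which case the equivalence class of $P$ consists of $\{Q'^{n/k}: Q'\text{ a rotation of }Q\}$ and thus has exactly $k$ elements, corresponding to the unique Lyndon heap representative of $Q$'s equivalence class. Summing the class sizes over all equivalence classes, indexed by the primitive size $k\mid n$, yields precisely $\sum_{k\mid n} k\cdot |\mc{L}_{G|_W}(k)|$.

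The main obstacle will be pinning down the bijection in the periodic case, specifically checking that if $Q$ is an aperiodic pyramid of size $k$ then the equivalence class of $Q^{n/k}$ has size exactly $k$ (not $n$), and that distinct Lyndon heaps of size $k$ give disjoint equivalence classes of periodic pyramids of size $n$. Both facts follow from Lalonde's result that rotations of aperiodic pyramids are all distinct and commute appropriately with the $r$-fold power operation, but making this precise is the technical crux. Once the enumeration identity is in hand, the generating-function manipulation above immediately gives the desired factorization and hence the claim.
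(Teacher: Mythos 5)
Your proof is correct and takes essentially the same approach as the paper: both take logarithms to reduce the claim to the divisor-sum identity $\sum_{k\mid n} k\cdot d_W(k) = |\mc{P}_{G|_W}(n)|$, establish the parallel identity $\sum_{k\mid n} k\cdot |\mc{L}_{G|_W}(k)| = |\mc{P}_{G|_W}(n)|$ by invoking Lalonde's structure theorem for pyramids, and then conclude by uniqueness of the solution to the divisor-sum recurrence (you appeal to M\"obius inversion; the paper runs an equivalent induction on $n$). Your partition of $\mc{P}_{G|_W}(n)$ by rotation-equivalence classes with the ``exactly $k$ elements per class'' count is the same enumeration the paper performs, just organized by class rather than by the map from Lyndon heaps to pyramids.
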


\begin{proof}
We take logarithms on both sides of (\ref{eqn:H_new_p_factors}) and apply (\ref{eqn:heap_series}) to get $$-\sum_{k\ge 1}d_W(k)\cdot \ln(1 - x_i^k) = P_{G|_W}(x_i).$$ Expanding the left side using the power series for log and the right side using the definition gives $$\sum_{k\ge 1} \sum_{d\ge 1}d_W(k)\frac{(x_i^k)^d}{d} = \sum_{n\ge 0}\frac{|\mc{P}_{G|_W}(n)|}{n}x_i^n.$$ Taking coefficients of $x_i^n$ on both sides (so $d = n/k$) gives $$\sum_{k\mid n} \frac{d_W(k)}{n/k} = \frac{|\mc{P}_{G|_W}(n)|}{n},$$ and multiplying by $n$ gives $$\sum_{k\mid n} k\cdot d_W(k) = |\mc{P}_{G|_W}(n)|.$$ Now, it is known from \cite{lalonde1995lyndon} that each pyramid on $n$ vertices can be uniquely written as a rotation of a single Lyndon heap or as a power $H^{d}$ of rotation $H$ of some Lyndon heap of size $k = n/d.$ Every Lyndon heap of size $k$ has $k$ rotations, so there are $k\cdot|\mc{L}_{G|_W}(k)|$ different pyramids of size $n$ that come from Lyndon heaps of each size $k\mid n.$ It follows that 
\begin{equation}\label{eqn:L_to_P}
    \sum_{k\mid n} k\cdot|\mc{L}_{G|_W}(k)| = |\mc{P}_{G|_W}(n)|.
\end{equation} But then Claim \ref{claim:d} follows by induction on $n,$ because if we assume $d_W(k) = |\mc{L}_{G|_W}(k)|$ for all $k<n,$ then $$n\cdot d_W(n) = |\mc{P}_{G|_W}(n)| - \sum_{k\mid n, k<n} k\cdot |\mc{L}_{G|_W}(k)| = n\cdot|\mc{L}_{G|_W}(n)|.$$ Thus by induction, $d_W(k) = |\mc{L}_{G|_W}(k)|$ for all $k,$ as claimed.
\end{proof}

Now multiplying over the $x_i$ variables and taking the alternating sum over subsets $W$, we get $$\omega(\ol{X}_G) = \sum_{W \se V} (-1)^{|V\bs W|}\prod_{k\ge 1}(1+\ol{p}_k')^{|\mc{L}_{G|_W}(k)|}.$$ Next, we note that within each $\prod_{k\ge 1}(1+\ol{p}_k')^{|\mc{L}_{G|_W}(k)|}$ term, each $\ol{p}'_\lam$ has coefficient $\prod_{k\ge 1} \binom{|\mc{L}_{G|_W}(k)|}{i_k(\lam)}$, where $i_k(\lam)$ is the number of times $k$ occurs as a part in $\lam.$ This product is precisely the number of ways to choose a Lyndon heap on $G|_W$ of size $\lam_i$ for each $i=1,\dots,\ell(\lam),$ such that the Lyndon heaps chosen for repeated parts of $\lam$ are distinct and unordered. Then, as in (\ref{eqn:XG}), taking the alternating sum over subsets $W \se V$ cancels all Lyndon heaps that do not use all the vertices, so we are left with the coefficient of $\ol{p}'_\lam$ counting sets of distinct Lyndon heaps of sizes $\lam_1,\dots,\lam_{\ell(\lam)}$ that cover all vertices in $V$, as claimed in Theorem \ref{thm:omega_XG_new_ps}. \qed

\subsection{Proof of Theorem \ref{thm:XG_new_ps}: \texorpdfstring{$\ol{p}'$}{p'}-expansion of \texorpdfstring{$\ol{X}_G$}{XG}} \label{sec:XG_new_ps}

In this case we are interested in $\ol{X}_G$ and not $\omega(\ol{X}_G)$, and $\ol{X}_G$ is a sum of products of $I_{G|_W}$'s by (\ref{eqn:XG}). We will thus find a factorization of the form $$\prod_{k\ge 1}(1+\ol{p}'_k)^{c_W(k)} = \prod_{i\ge 1}I_{G|_W}(x_i).$$ Factoring over the $x_i$ variables as before, this reduces to finding the exponents $c_W(k)$ such that \begin{equation}\label{eqn:I_new_p_factors}
    \prod_{k\ge 1} \frac{1}{(1 - x_i^k)^{c_W(k)}}  = I_{G|_W}(x_i).
\end{equation}

\begin{claim}\label{claim:c}
    $c_W(k) = \begin{cases}
        (-1)^{k+1}|\mc{L}_{G|_W}(k)| &\tn{if }k\not\equiv 2\pmod4, \\
        -|\mc{L}_{G|_W}(k)|-|\mc{L}_{G|_W}(k/2)| &\tn{if }k\equiv 2\pmod4.
    \end{cases}$
\end{claim}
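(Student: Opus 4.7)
The plan is to follow the template of the proof of Claim \ref{claim:d}, with $I_{G|_W}$ in place of $H_{G|_W}$. The one new ingredient is that the identity $\ln I_G(x) = -P_G(-x)$ (which follows from (\ref{eqn:heap_series})) introduces a sign $(-1)^{n+1}$ in the resulting divisor identity, and that sign is precisely what forces the three-way case split in the statement.

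First I would take logarithms of both sides of (\ref{eqn:I_new_p_factors}), expand each $-\ln(1 - x_i^k)$ as $\sum_{d \ge 1} x_i^{kd}/d$, and expand $\ln I_{G|_W}(x_i) = -P_{G|_W}(-x_i) = \sum_{n \ge 1} (-1)^{n+1}|\mc{P}_{G|_W}(n)| x_i^n / n$. Extracting the coefficient of $x_i^n$ on both sides and multiplying through by $n$ yields
\[
\sum_{k \mid n} k \cdot c_W(k) = (-1)^{n+1}|\mc{P}_{G|_W}(n)|,
\]
and substituting the Lyndon--pyramid relation (\ref{eqn:L_to_P}) on the right gives
\[
\sum_{k \mid n} k \cdot c_W(k) = (-1)^{n+1}\sum_{k \mid n} k \cdot |\mc{L}_{G|_W}(k)|.
\]

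From here I would determine $c_W(n)$ by induction on $n$, exactly parallel to the induction at the end of the proof of Claim \ref{claim:d}. The triangular system
\[
n \cdot c_W(n) = (-1)^{n+1}\sum_{k \mid n} k \cdot |\mc{L}_{G|_W}(k)| - \sum_{k \mid n,\, k < n} k \cdot c_W(k)
\]
I would solve case by case according to $n \pmod 4$. For $n$ odd, every divisor of $n$ is odd, the inductive hypothesis gives $c_W(k) = |\mc{L}_{G|_W}(k)|$ for every $k \mid n$ with $k < n$, and $(-1)^{n+1} = 1$, so the two sums cancel and leave $c_W(n) = |\mc{L}_{G|_W}(n)|$. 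For $n \equiv 2 \pmod 4$, writing $n = 2m$ with $m$ odd, the divisors of $n$ split as the divisors of $m$ together with their doubles; a direct computation using the induction hypothesis on the divisors smaller than $n$ shows that the sign flip on the right-hand side forces the formula $c_W(n) = -|\mc{L}_{G|_W}(n)| - |\mc{L}_{G|_W}(n/2)|$, with the extra $-|\mc{L}_{G|_W}(n/2)|$ term absorbing the discrepancy between $(-1)^{n+1}\sum_{d \mid m} d\,|\mc{L}_{G|_W}(d)|$ on the right and the odd-divisor contribution $\sum_{d \mid m} d\,c_W(d)$ already on the left. Finally, for $n \equiv 0 \pmod 4$, the corrections already built into $c_W(k)$ at the divisors $k \equiv 2 \pmod 4$ of $n$ cancel the doubled odd-divisor terms on both sides, leaving $c_W(n) = -|\mc{L}_{G|_W}(n)|$.

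The main obstacle is the bookkeeping in the $n \equiv 2 \pmod 4$ case, where one must carefully track how the sign $(-1)^{n+1} = -1$ on the right interacts with the already-determined values $c_W(d) = |\mc{L}_{G|_W}(d)|$ at the odd divisors $d$ of $n/2$; this is what produces the correction term $-|\mc{L}_{G|_W}(n/2)|$. Once this case is pinned down, the $n$ odd case follows immediately from the induction hypothesis and the $n \equiv 0 \pmod 4$ case reduces to a routine cancellation check, so no further difficulty remains.
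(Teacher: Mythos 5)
Your derivation of the divisor-sum identity
\[
\sum_{k \mid n} k \cdot c_W(k) = (-1)^{n+1}\lvert\mc{P}_{G|_W}(n)\rvert
\]
matches the paper exactly, but from there the two proofs diverge. The paper resolves this triangular system by M\"obius inversion, obtaining the closed form
\[
k\cdot c_W(k) = \sum_{d\mid k}\mu\!\left(\tfrac{k}{d}\right)(-1)^{d+1}\lvert\mc{P}_{G|_W}(d)\rvert,
\]
and then manipulates the signs by splitting into even and odd divisors, using $\mu(k/d)=0$ when $4\mid k/d$ for the $4\mid k$ case, and $\mu(k/d) = -\mu(k/(2d))$ for the $k\equiv 2 \pmod 4$ case. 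You instead solve the system by strong induction on $n$, plugging in the already-determined values of $c_W(k)$ for $k<n$ and case-splitting on $n\bmod 4$; this is the same strategy the paper uses to prove Claim \ref{claim:d}, so your route is arguably the more uniform of the two. Both are correct: I verified that the induction closes in all three cases (including $n\equiv 0 \pmod 4$, where the description is a bit terse --- for $n=2^a m$ with $m$ odd and $a\ge 2$, the odd divisors contribute $\sum_{d\mid m}dL(d)$, each $2d$ with $d\mid m$ contributes $-2dL(2d)-2dL(d)$, and the rest contribute $-2^idL(2^id)$, which telescopes to exactly $-nL(n)$ after comparing with the right side). The one thing the M\"obius-inversion route buys that yours does not is the explicit formula labelled $(\ref{eqn:c_mobius})$, which the paper later reuses verbatim in \S\ref{sec:new_ps_q} to relate the quasisymmetric expansion back to Theorem \ref{thm:XG_new_ps}; with your proof in place, that later step would need a small rewrite.
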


\begin{proof}
    Like in \S \ref{sec:omega_XG_new_ps}, we take logarithms on both sides of (\ref{eqn:I_new_p_factors}) and apply (\ref{eqn:heap_series}) to get $$-\sum_{k\ge 1} c_W(k)\cdot\ln(1 - x_i^k) = -P_{G|_W}(-x_i),$$ which is the same as in \S \ref{sec:omega_XG_new_ps} except with some negative signs on the right. Expanding the power series on both sides gives $$\sum_{k\ge 1} \sum_{d\ge 1}c_W(k) \frac{(x_i^k)^d}{d} = \sum_{n\ge 0}(-1)^{n+1}\frac{|\mc{P}_{G|_W}(n)|}{n}x_i^n.$$ Taking the coefficient of $x^n$ on both sides and then multiplying by $n$ gives $$\sum_{k\mid n}k\cdot c_W(k) = (-1)^{n+1}|\mc{P}_{G|_W}(n)|.$$ Applying M\"obius inversion gives \begin{equation}\label{eqn:c_mobius}
        k\cdot c_W(k) = \sum_{d\mid k}\mu\left(\frac{k}{d}\right)\cdot(-1)^{d+1}\cdot|\mc{P}_{G|_W}(d)|.
    \end{equation}
    We can also apply M\"obius inversion to equation (\ref{eqn:L_to_P}) from \S \ref{sec:omega_XG_new_ps} to get \begin{equation}\label{eqn:L_mobius}
        k\cdot |\mc{L}_{G|_W}(k)| = \sum_{d\mid k} \mu\left(\frac{k}{d}\right)\cdot|\mc{P}_{G|_W}(d)|.
    \end{equation} For $k$ odd, (\ref{eqn:c_mobius}) and (\ref{eqn:L_mobius}) are the same since every $d\mid k$ is odd and thus $(-1)^{d+1}=1$ for all $d\mid k,$ so $c_W(k) = |\mc{L}_{G|_W}(k)|.$ For $k$ even, we get 
    \begin{align*}
        k\cdot c_W(k) &= -\sum_{d\mid k\tn{ even}}\mu\left(\frac{k}{d}\right)\cdot|\mc{P}_{G|_W}(d)|+\sum_{d\mid k\tn{ odd}}\mu\left(\frac{k}{d}\right)\cdot|\mc{P}_{G|_W}(d)| \\
        &= -\sum_{d\mid k}\mu\left(\frac{k}{d}\right)\cdot|\mc{P}_{G|_W}(d)| + 2\sum_{d\mid k\tn{ odd}}\mu\left(\frac{k}{d}\right)\cdot|\mc{P}_{G|_W}(d)| \\
        &= -k\cdot|\mc{L}_{G|_W}(k)|+2\sum_{d\mid k\tn{ odd}}\mu\left(\frac{k}{d}\right)\cdot|\mc{P}_{G|_W}(d)|.
    \end{align*}
    If $4\mid k,$ the second term goes away because whenever $d$ is odd, $k/d$ is a multiple of 4 and so $\mu(k/d) = 0.$ Thus, $c_W(k) = -|\mc{L}_{G|_W}(k)|$ when $4\mid k.$ Finally, if $k\equiv 2\pmod 4,$ the odd divisors of $k$ are precisely the divisors of $k/2,$ and for each such odd divisor $d,$ $\mu(k/d) = -\mu(k/(2d)),$ since $k/d$ has an extra factor of 2 compared to $k/(2d)$. Plugging that in and then applying (\ref{eqn:L_mobius}) with $k/2$ in place of $k$ gives $$2\sum_{d\mid k\tn{ odd}}\mu\left(\frac{k}{d}\right)\cdot|\mc{P}_{G|_W}(d)| = -2\sum_{d\mid (k/2)}\mu\left(\frac{k/2}d\right)\cdot|\mc{P}_{G|_W}(d)| = -2\cdot \frac k2 \cdot |\mc{L}_{G|_W}(k/2)| = -k\cdot|\mc{L}_{G|_W}(k/2)|.$$ Plugging that back in, we get that for $k\equiv 2\pmod4,$ $$k\cdot c_W(k) = -k\cdot|\mc{L}_{G|_W}(k)| - k\cdot|\mc{L}_{G|_W}(k/2)|,$$ and dividing both sides by $k$ finishes the proof of Claim \ref{claim:c}.
\end{proof} 

Now to finish proving Theorem \ref{thm:XG_new_ps}, we plug our formula for $c_W(k)$ into (\ref{eqn:I_new_p_factors}), multiply over colors, and take the alternating sum over subsets $W\se V$ to get $$\ol{X}_G = \sum_{W\se V}(-1)^{|V\bs W|}\prod_{k\not\equiv 2 \ (\tn{mod }4)}(1+\ol{p}'_k)^{(-1)^{k+1}|\mc{L}_{G|_W}(k)|}\prod_{k\equiv 2\ (\tn{mod }4)}(1+\ol{p}'_k)^{-|\mc{L}_{G|_W}(k)|-|\mc{L}_{G|_W}(k/2)|}.$$ The coefficient $[\ol{p}'_\lam]\ol{X}_G$ is thus $$\sum_{W\se V}(-1)^{|V\bs W|}\prod_{k\tn{ odd}}\binom{|\mc{L}_{G|_W}(k)|}{i_k(\lam)}\prod_{4\mid k}\binom{-|\mc{L}_{G|_W}(k)|}{i_k(\lam)}\prod_{k\equiv 2 \ (\tn{mod }4)}\binom{-|\mc{L}_{G|_W}(k)|-|\mc{L}_{G|_W}(k/2)|}{i_k(\lam)},$$ where again, $i_k(\lam)$ is the multiplicity of $k$ as a part of $\lam.$ For a negative binomial coefficient, we have $$\binom{-n}{r} = \frac{-n(-n-1)\dots(-n-r+1)}{r!} = (-1)^r\binom{n+r-1}{r} = (-1)^r\multiset{n}{r},$$ where $\multiset{n}{r} = \binom{n+r-1}{r}$ is the number of ways to choose $r$ things out of $n$ with repetition allowed. The $(-1)^r$ means we flip the sign for every even part of $\lam,$ which is equivalent to multiplying by $(-1)^{\lam_i-1}$ for each part of $\lam,$ since $(-1)^{\lam_i-1}$ is 1 for odd parts and $-1$ for even parts. Thus, in total, the sign of the $\ol{p}'_\lam$ coefficient for a fixed $W$ (ignoring the $(-1)^{|V\bs W|}$) is always $(-1)^{\lam_1-1+\dots+\lam_{\ell(\lam)}-1} = (-1)^{|\lam|-\ell(\lam)}$, so our overall coefficient $[\ol{p}_\lam']\ol{X}_G$ becomes $$(-1)^{|\lam|-\ell(\lam)}\sum_{W\se V}(-1)^{|V\bs W|} \prod_{k\tn{ odd}}\binom{|\mc{L}_{G|_W}(k)|}{i_k(\lam)}\prod_{4\mid k}\multiset{|\mc{L}_{G|_W}(k)|}{i_k(\lam)}\prod_{k\equiv 2 \ (\tn{mod }4)}\multiset{|\mc{L}_{G|_W}(k)|+|\mc{L}_{G|_W}(k/2)|}{i_k(\lam)}.$$ The part after the $(-1)^{|V\bs W|}$ counts ways to choose a Lyndon heap of size $\lam_i$ for each part $\lam_i$ (except that if $\lam_i\equiv2\pmod4$ the Lyndon heap may also have size $\lam_i/2$), such that the Lyndon heaps for repeated odd parts must be distinct while the ones for repeated even heaps may be the same. The alternating sum over $W\se V$ eliminates multisets of Lyndon heaps where not all vertices are used, so the coefficient $(-1)^{|\lam|-\ell(\lam)}[p'_\lam]\ol{X}_G$ is as claimed in Theorem \ref{thm:XG_new_ps}. \qed

\subsection{Proof of Theorem \ref{thm:omega_XG_old_ps}: \texorpdfstring{$\ol{p}$}{p}-expansion of \texorpdfstring{$\omega(\ol{X}_G)$}{omega(XG)}}\label{sec:omega_XG_old_ps}

Like in \S\ref{sec:omega_XG_new_ps}, we will find a factorization of the form $$\prod_{k\ge 1}(1+\ol{p}_k)^{b_W(k)} = \prod_{i\ge 1} H_{G|_W}(x_i).$$ Since $1+\ol{p}_k = \prod_{i\ge 1}(1+x_i^k),$ when factoring over $x_i$'s this becomes $$\prod_{k\ge 1}(1+x_i^k)^{b_W(k)} = H_{G|_W}(x_i).$$

\begin{claim}\label{claim:b}
    $\displaystyle{b_W(k) = \sum_{2^j\mid k}|\mc{L}_{G|_W}(k/2^j)|}.$
\end{claim}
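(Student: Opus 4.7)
The plan is to reduce Claim \ref{claim:b} to Claim \ref{claim:d} using the elementary identity $1+y = (1-y^2)/(1-y)$, rather than redoing the logarithmic calculation from scratch. Concretely, I would rewrite
\[
\prod_{k \ge 1}(1+x_i^k)^{b_W(k)} \;=\; \prod_{k \ge 1}\frac{(1-x_i^{2k})^{b_W(k)}}{(1-x_i^k)^{b_W(k)}},
\]
then reindex the numerator by $m=2k$ to combine exponents over each $(1-x_i^m)$. This gives
\[
\prod_{k\ge 1}(1+x_i^k)^{b_W(k)} \;=\; \prod_{m\ge 1}(1-x_i^m)^{-b_W(m)+\varepsilon_m\,b_W(m/2)},
\]
where $\varepsilon_m=1$ if $m$ is even and $\varepsilon_m=0$ otherwise.

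Next, I would invoke Claim \ref{claim:d}, which already gives the factorization $H_{G|_W}(x_i)=\prod_{k\ge 1}(1-x_i^k)^{-|\mc{L}_{G|_W}(k)|}$. Equating exponents on each $(1-x_i^m)$ yields the recurrence
\[
b_W(m) \;-\; \varepsilon_m\, b_W(m/2) \;=\; |\mc{L}_{G|_W}(m)|.
\]

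Finally, I would verify by telescoping that $b_W(k):=\sum_{2^j\mid k}|\mc{L}_{G|_W}(k/2^j)|$ satisfies this recurrence: the sums for $b_W(k)$ and $b_W(k/2)$ differ by exactly the $j=0$ term $|\mc{L}_{G|_W}(k)|$, and for $k$ odd the sum collapses to the single term $|\mc{L}_{G|_W}(k)|$, matching the recurrence with $\varepsilon_k=0$. Uniqueness of the solution (induction on $k$, since $b_W(m/2)$ is determined by smaller values) then pins down $b_W(k)$.

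The main obstacle is really just careful bookkeeping in the reindexing step, making sure that the contribution $b_W(m/2)$ from the numerator is correctly paired against the contribution $b_W(m)$ from the denominator at each $(1-x_i^m)$ factor. As an alternative, if one wanted to avoid this reindexing, one could take logarithms as in the proof of Claim \ref{claim:d}, use $\ln(1+y)=\sum_{d\ge 1}(-1)^{d+1}y^d/d$, and extract coefficients of $x_i^n$ to obtain $\sum_{k\mid n}k\, b_W(k)(-1)^{n/k+1}=|\mc{P}_{G|_W}(n)|$; combining with $\sum_{k\mid n}k\,|\mc{L}_{G|_W}(k)|=|\mc{P}_{G|_W}(n)|$ from \eqref{eqn:L_to_P} and solving by M\"obius inversion (with the sign $(-1)^{n/k+1}$) again yields the claimed formula, but the factorization approach above is cleaner.
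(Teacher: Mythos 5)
Your proposal is correct, and it is a genuinely different and arguably cleaner route than the paper's. The paper proves Claim~\ref{claim:b} by taking logarithms of the defining identity $\prod_{k\ge 1}(1+x_i^k)^{b_W(k)} = H_{G|_W}(x_i)$, extracting the coefficient of $x_i^n$ to get $\sum_{k\mid n}(-1)^{n/k+1}k\,b_W(k) = |\mc{P}_{G|_W}(n)|$, then computing the Dirichlet inverse of $(-1)^{d+1}$ (Lemma~\ref{lem:dirichlet}), applying M\"obius inversion, and finishing with an induction on $k$. Your approach sidesteps the Dirichlet machinery entirely: the identity $1+y=(1-y^2)/(1-y)$ converts the $\ol{p}$-factorization into a $\ol{p}'$-factorization, the reindexing $m=2k$ gives exponents $\varepsilon_m b_W(m/2)-b_W(m)$ on each $(1-x_i^m)$, and comparing against the already-established Claim~\ref{claim:d} (i.e.\ $H_{G|_W}(x_i) = \prod_{m}(1-x_i^m)^{-|\mc{L}_{G|_W}(m)|}$) yields the two-term recurrence $b_W(m)-\varepsilon_m b_W(m/2)=|\mc{L}_{G|_W}(m)|$, which is solved immediately by telescoping. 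This trades the Dirichlet/M\"obius inversion computation for one reindexing and one use of the previously proven Claim~\ref{claim:d}; the paper's route is more self-contained at the level of this one claim but relies on Lemma~\ref{lem:dirichlet}, which it also needs later for Claim~\ref{claim:a}. One minor point you should make explicit is that the exponents in a factorization $\prod_m (1-x^m)^{c_m}$ (or $\prod_m(1+x^m)^{c_m}$) of a formal power series with constant term $1$ are uniquely determined, so that both the definition of $b_W$ and the step of equating exponents are justified; this is standard (it follows, e.g., from the logarithm being a bijection on such series), but it deserves a sentence.
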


\begin{proof}
    As before, we start by taking logarithms on both sides, giving $$\sum_{k\ge 1} b_W(k)\cdot \ln(1+x_i^k) = I_{G|_W}(x_i).$$ Expanding both sides as power series yields $$\sum_{k\ge 1}\sum_{d\ge 1} b_W(k)\cdot\frac{(-1)^{d+1}(x_i^k)^d}{d} = \sum_{n\ge 1} \frac{|\mc{P}_{G|_W}(n)|}{n}x^n.$$ Taking coefficients of $x^n$ on both sides and multiplying by $n$ gives 
    \begin{equation}\label{eqn:b_P}
        \sum_{k\mid n} (-1)^{n/k+1}\cdot k\cdot b_W(k) = |\mc{P}_{G|_W}(n)|.
    \end{equation}
    The left side is the Dirichlet convolution of the sequences $(-1)^{d+1}$ and $k\cdot b_W(k),$ so to apply M\"obius inversion, we need to find the Dirichlet inverse of $(-1)^{d+1}$:

    \begin{lemma}\label{lem:dirichlet}
        The Dirichlet inverse of $(-1)^{d+1}$ is $\hat{\mu}(n) = \begin{cases}
            \mu(n) & \tn{if }n\tn{ is odd}, \\
            2^{j-1}\mu(n/2^j) &\tn{if }n\tn{ is even with }n/2^j\tn{ odd}.
        \end{cases}$
    \end{lemma}

    \begin{proof}
        We need to show that $(1,-1,1,\dots)*(\hat{\mu}(1),\hat{\mu}(2),\hat{\mu}(3),\dots) = (1,0,0,\dots),$ where the $*$ is Dirichlet convolution. That is, we need to show that \begin{equation}\label{eqn:dirichlet}
            \sum_{d\mid n}(-1)^{n/d+1}\cdot \hat{\mu}(d) = \begin{cases}
            1 & \tn{if }n=1, \\
            0 &\tn{else}.
        \end{cases}
        \end{equation}
        This holds for $n=1.$ For $n>1$ odd, $(-1)^{n/d+1}=1$ all $d\mid n,$ so the left side of (\ref{eqn:dirichlet}) becomes $$\sum_{d\mid n} \mu(d) = 0,$$ as needed. For $n$ even with $n/2^j$ odd, each $d\mid n$ can be uniquely written as $d = 2^id'$ for some $i\le j$ and $d'\mid (n/2^j).$ Then $(-1)^{n/d+1}=1$ if and only if $i=j$ since that is the only case where $n/d$ is odd. Thus, if we gather the terms on the left side of (\ref{eqn:dirichlet}) based on the value of $d',$ we get
        \begin{align*}
            \sum_{d\mid n} (-1)^{n/d+1}\cdot \hat{\mu}(d) &= \sum_{d=2^jd'}\hat{\mu}(d) - \sum_{i=1}^{j-1} \ \sum_{d = 2^id'}\hat{\mu}(d) - \sum_{d = d'}\hat{\mu}(d) \\
            &= \sum_{d'\mid (n/2^j)}\left(2^{j-1}\mu(d')-\sum_{i=1}^{j-1}2^{i-1}\mu(d')-\mu(d')\right) \\
            &=\sum_{d'\mid (n/2^j)}\mu(d')(2^{j-1}-2^{j-2}-2^{j-3}-\dots-2-1-1) = 0.
        \end{align*}
        This proves Lemma \ref{lem:dirichlet}.
    \end{proof}

    Returning to Claim \ref{claim:b}, applying M\"obius inversion to (\ref{eqn:b_P}) gives 
    \begin{equation}\label{eqn:b_mobius}
        k\cdot b_W(k) = \sum_{k/d\tn{ odd}}\mu\left(\frac kd\right)\cdot|\mc{P}_{G|_W}(d)|+\sum_{k/(2^jd)\tn{ odd}, \ j\ge 1}2^{j-1}\mu\left(\frac k{2^jd}\right)\cdot |\mc{P}_{G|_W}(d)|. 
    \end{equation}
    From (\ref{eqn:L_mobius}), we can write 
    \begin{align*}
        k\cdot |\mc{L}_{G|_W}(k)| &= \sum_{d\mid k}\mu\left(\frac{k}{d}\right)\cdot|\mc{P}_{G|_W}(d)| \\
        &= \sum_{k/d\tn{ odd}} \mu\left(\frac kd\right)\cdot |\mc{P}_{G|_W}(d)| - \sum_{k/(2d)\tn{ odd}} \mu\left(\frac k{2d}\right)\cdot|\mc{P}_{G|_W}(d)|.
    \end{align*}
    The terms where $k/(2d)$ is even go away because if $k/(2d)$ is even, then $\mu(k/d)=0$ since $4\mid k/d.$ On the other hand, if $k/(2d)$ is odd, then $\mu(k/d) = -\mu(k/(2d))$ since $k/d$ has an extra factor of 2 compared to $k/(2d).$ Plugging that into (\ref{eqn:b_mobius}) gives 
    \begin{align*}
        k\cdot b_W(k) &= k\cdot|\mc{L}_{G|_W}(k)| + 2\sum_{k/(2d)\tn{ odd}}\mu\left(\frac{k}{2d}\right)\cdot|\mc{P}_{G|_W}(d)| + \sum_{k/(2^jd)\tn{ odd}, \ j\ge 2} 2^{j-1}\mu\left(\frac{k}{2^jd}\right)\cdot |\mc{P}_{G|_W}(d)|\\
        &= k\cdot|\mc{L}_{G|_W}(k)|\\
        &+2\left(\sum_{(k/2)/d\tn{ odd}}\mu\left(\frac{k/2}{d}\right)\cdot|\mc{P}_{G|_W}(d)|+\sum_{(k/2)/(2^{j'}d)\tn{ odd}, \ j'\ge1}2^{j'-1}\mu\left(\frac{k/2}{2^{j'}d}\right)\cdot|\mc{P}_{G|_W}(d)|\right),
    \end{align*}
    where we made the substitution $j'=j-1.$ Now we can use induction on $k.$ Claim \ref{claim:b} holds for $k$ odd because the last line above is 0. For $k$ even, if we assume Claim \ref{claim:b} holds for $k/2,$ then the last line above is precisely $2\cdot k/2\cdot b_W(k/2)$ by the inductive hypothesis, so 
    \begin{align*}
        k\cdot b_W(k) &= k\cdot|\mc{L}_{G|_W}(k)| + 2\cdot\frac k2\cdot b_W\left(\frac k2\right) \\
        &= k\cdot|\mc{L}_{G|_W}(k)| + k\sum_{2^j\mid (k/2)}\left|\mc{L}_{G|_W}\left(\frac{k/2}{2^j}\right)\right| \\
        &= k\cdot\sum_{2^j\mid k}\left|\mc{L}_{G|_W}\left(\frac{k}{2^j}\right)\right|,
    \end{align*}
    as claimed.
\end{proof}

Now to finish the proof of Theorem \ref{thm:omega_XG_old_ps}, we have $$\omega(\ol{X}_G) = \sum_{W\se V}(-1)^{|V\bs W|}\prod_{k\ge 1}(1+\ol{p}_k)^{\sum_{2^j\mid k}|\mc{L}_{G|_W}(k/2^j)|}.$$ For a particular $W$, the coefficient of $\ol{p}_\lam$ in the part after the $(-1)^{|V\bs W|}$ is thus $$\prod_{k\ge 1}\binom{\sum_{2^j\mid k}|\mc{L}_{G|_W}(k/2^j)|}{i_k(\lam)}.$$ This counts ways to choose for each $k\ge 1$ a total of $i_k(\lam)$ distinct Lyndon heaps with size $k/2^j$ for some $j\ge 0.$ Then taking the alternating sum over $W\se V$ eliminates sets of heaps where not all vertices are used, proving Theorem \ref{thm:omega_XG_old_ps}. \qed

\subsection{Proof of Theorem \ref{thm:XG_old_ps}: \texorpdfstring{$\ol{p}$}{p}-expansion of \texorpdfstring{$\ol{X}_G$}{XG}}\label{sec:XG_old_ps}

Starting as in our previous proofs, we will find a factorization of the form $$\prod_{k\ge 1}(1+\ol{p}_k)^{a_W(k)} = \prod_{i\ge 1}I_{G|_W}(x_i),$$ which reduces to finding a factorization of the form $$\prod_{k\ge 1}(1+x_i^k)^{a_W(k)} = I_{G|_W}(x_i).$$

\begin{claim}\label{claim:a}
    $a_W(k) = \begin{cases}
        |\mc{L}_{G|_W}(k)| & \tn{if }k\tn{ is odd}, \\
        -\displaystyle{\sum_{k/2^j\tn{ even}}}|\mc{L}_{G|_W}(k/2^j) & \tn{if }k\tn{ is even}.
    \end{cases}$
\end{claim}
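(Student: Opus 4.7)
The strategy parallels that of Claims \ref{claim:d}, \ref{claim:c}, and \ref{claim:b}: take logarithms, expand both sides in $x_i$, match coefficients of $x^n$, and invert the resulting Dirichlet-convolution equation using the appropriate arithmetic inverse. Concretely, take $\log$ of $\prod_k(1+x^k)^{a_W(k)} = I_{G|_W}(x)$. By (\ref{eqn:heap_series}) we have $\ln(I_{G|_W}(x)) = -P_{G|_W}(-x)$; expanding both sides as power series and equating coefficients of $x^n$ (then multiplying by $n$) yields
$$\sum_{k\mid n} (-1)^{n/k+1}\cdot k\cdot a_W(k) = (-1)^{n+1}\cdot|\mc{P}_{G|_W}(n)|.$$
This has the same shape as (\ref{eqn:b_P}) except for the extra $(-1)^{n+1}$ on the right, which comes from the $-x_i$ inside $P_{G|_W}$.

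Next I invert using the Dirichlet inverse $\hat{\mu}$ of $(-1)^{d+1}$ provided by Lemma \ref{lem:dirichlet}, obtaining
$$k\cdot a_W(k) = \sum_{d\mid k} \hat{\mu}(k/d)\cdot(-1)^{d+1}\cdot|\mc{P}_{G|_W}(d)|.$$
For $k$ odd, every divisor $d$ is odd, so $(-1)^{d+1}=1$ and $\hat{\mu}(k/d) = \mu(k/d)$, and the right side collapses to $k\cdot|\mc{L}_{G|_W}(k)|$ by (\ref{eqn:L_mobius}). This handles the odd case of the claim immediately.

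For $k$ even, write $k = 2^sm$ with $m$ odd and $s\ge 1$, and index divisors as $d = 2^id'$ with $0\le i\le s$ and $d'\mid m$. The values of $(-1)^{d+1}$ and $\hat{\mu}(k/d)$ split cleanly by $i$: $(-1)^{d+1}=1$ only when $i=0$, while $\hat{\mu}(k/d) = 2^{s-i-1}\mu(m/d')$ for $i<s$ and $\hat{\mu}(k/d) = \mu(m/d')$ for $i=s$. Introducing the auxiliary sums $S_i := \sum_{d'\mid m}\mu(m/d')\cdot|\mc{P}_{G|_W}(2^id')|$, I can evaluate each $S_i$ by applying (\ref{eqn:L_mobius}) with $2^im$ in place of $k$: the only nonzero contributions to $|\mc{L}_{G|_W}(2^im)|\cdot 2^im$ come from divisors containing exactly $2^{i-1}$ or $2^i$ as their 2-part, giving the recurrence $S_i = 2^im\cdot|\mc{L}_{G|_W}(2^im)| + S_{i-1}$ (with $S_0 = m\cdot|\mc{L}_{G|_W}(m)|$), and hence $S_i = m\sum_{j=0}^i 2^j\cdot|\mc{L}_{G|_W}(2^jm)|$.

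The main obstacle is then the bookkeeping: substituting this formula for $S_i$ back into $k\cdot a_W(k)$ and collecting the coefficient of each $|\mc{L}_{G|_W}(2^jm)|$. Using the geometric identity $\sum_{i=j}^{s-1}2^{s-1-i} = 2^{s-j}-1$, the contributions from the $i=0$, $1\le i\le s-1$, and $i=s$ cases must conspire so that the coefficient of $|\mc{L}_{G|_W}(m)|$ telescopes to $0$, while the coefficient of $|\mc{L}_{G|_W}(2^jm)|$ for each $1\le j\le s$ simplifies to $-2^s$. Dividing through by $k = 2^sm$ yields $a_W(k) = -\sum_{j=1}^s|\mc{L}_{G|_W}(2^jm)|$, and reindexing $i := s-j$ rewrites this as $-\sum_{k/2^i\text{ even}}|\mc{L}_{G|_W}(k/2^i)|$, which is the claimed formula.
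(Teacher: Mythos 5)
Your proof is correct, and you arrive at the same Dirichlet-inversion setup as the paper (same application of Lemma \ref{lem:dirichlet} to invert $(-1)^{d+1}$, same handling of the odd case). But your treatment of the even case is a genuinely different route. The paper observes that the inverted formula for $k\cdot a_W(k)$ is term-by-term the negative of the formula (\ref{eqn:b_mobius}) for $k\cdot b_W(k)$ \emph{except} on the terms where $d$ is odd, so it writes $k\cdot a_W(k) = -k\cdot b_W(k) + 2\sum_{d\text{ odd}}(\cdots)$, evaluates the correction via (\ref{eqn:L_mobius}) with $k/2^{j'}$ in place of $k$, and then just plugs in the already-proven Claim \ref{claim:b}. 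This is short precisely because it leans on the earlier claim. Your approach is self-contained: you stratify divisors by 2-adic valuation, introduce the auxiliary sums $S_i$, derive the telescoping recurrence $S_i - S_{i-1} = 2^im|\mc{L}_{G|_W}(2^im)|$ from (\ref{eqn:L_mobius}), and then do the coefficient bookkeeping (the geometric-sum cancellation for $L_0$ and the collapse to $-2^s$ for $j\ge 1$, which I checked and both work out). What your version buys is independence from Claim \ref{claim:b}; what the paper's version buys is brevity and makes the structural parallel between $a_W$ and $b_W$ explicit. Either is a valid proof of the claim.
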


\begin{proof}
    As in our previous proofs, we take logarithms to get $$\sum_{k\ge 1}a_W(k)\cdot\ln(1+x_i^k) = -P_{G|_W}(-x_i).$$ Expanding both sides gives $$\sum_{k\ge 1}\sum_{d\ge 1}a_W(k)\cdot\frac{(-1)^{d+1}(x_i^k)^d}{d} = \sum_{n\ge 1}(-1)^{n+1}\frac{|\mc{P}_{G|_W}(n)|}{n}x_i^n.$$ Taking coefficients of $x^n$ and multiplying by $n$ gives $$\sum_{k\mid n} (-1)^{n/k+1}\cdot k\cdot a_W(k) = (-1)^{n+1}|\mc{P}_{G|_W}(n)|.$$ Now we apply M\"obius inversion based on the Dirichlet inverse of $(-1)^{d+1}$ computed in Lemma \ref{lem:dirichlet} to get $$k\cdot a_W(k) = \sum_{k/d\tn{ odd}} \mu\left(\frac kd\right) (-1)^{d+1}|\mc{P}_{G|_W}(d)|+\sum_{k/(2^jd)\tn{ odd}, \ j\ge 1} 2^{j-1}\mu\left(\frac k{2^jd}\right)(-1)^{d+1}|\mc{P}_{G|_W}(d)|.$$ For $k$ odd, we are done because the second term goes away and $(-1)^{d+1}=1$ for all $d\mid k,$ so the first term equals $k\cdot |\mc{L}_{G|_W}(k)|$ by (\ref{eqn:L_mobius}).
    
    For $k$ even, $(-1)^{d+1}=-1$ unless $d$ is odd. If $2^{j'}$ is the largest power of 2 dividing $k$, then the terms above are exactly the negatives of the terms from (\ref{eqn:b_P}), except that the terms with $k/(2^{j'} d)$ odd have the wrong signs. Thus, we can subtract those terms and then add them back twice to exactly get $-k\cdot b_W(k)$. Then we can plug in the formula for $b_W(k)$ from \ref{claim:b}. Doing that gives 
    \begin{align*}
        k\cdot a_W(k) &= -k\cdot b_W(k) + 2\sum_{k/(2^jd)\tn{ odd}}2^{j-1}\mu\left(\frac k{2^jd}\right)(-1)^{d+1}|\mc{P}_{G|_W}(d)|\\
        &=-k\sum_{2^j\mid k}\left|\mc{L}_{G|_W}\left(\frac{k}{2^j}\right)\right| + 2^{j'}\sum_{d\mid (k/2^{j'})}\mu\left(\frac {k/2^{j'}}{d}\right)|\mc{P}_{G|_W}(d)|.
    \end{align*}
    But then we can apply (\ref{eqn:L_mobius}) to the last term with $k/2^{j'}$ in place of $k$ to get $$k\cdot a_W(k) = -k\sum_{2^j\mid k}\left|\mc{L}_{G|_W}\left(\frac{k}{2^j}\right)\right| + 2^{j'}\cdot \frac k{2^{j'}}\left|\mc{L}_{G|_W}\left(\frac k{2^{j'}}\right)\right| = -k\sum_{2^j\mid k, \ j\ne j'}\left|\mc{L}_{G|_W}\left(\frac{k}{2^j}\right)\right|,$$ and Claim \ref{claim:a} follows.
\end{proof}

To finish the proof of Theorem \ref{thm:XG_old_ps}, Claim \ref{claim:d} gives $$\ol{X}_G = \sum_{W\se V}(-1)^{|V\bs W|}\prod_{k\tn{ odd}}(1+ \ol{p}_k)^{|\mc{L}_{G|_W}(k)|}\prod_{k\tn{ even}}(1+\ol{p}_k)^{-\sum_{k/2^j\tn{ even}}|\mc{L}_{G|_W}(k/2^j)|}.$$ Thus, the coefficient of $\ol{p}_\lam$ in the part after the $(-1)^{|V\bs W|}$ is $$\prod_{k\tn{ odd}}\binom{|\mc{L}_{G|_W}(k)|}{i_k(\lam)}\prod_{k\tn{ even}}\binom{-\sum_{k/2^j\tn{ even}}|\mc{L}_{G|_W}(k/2^j)|}{i_k(\lam)}.$$ Replacing the negative binomial coefficient with a multichoose as in \S \ref{sec:XG_new_ps} pulls out a negative for each even part of $\lam,$ making the overall sign $(-1)^{|\lam|-\ell(\lam)}$ by the same reasoning as in \S \ref{sec:XG_new_ps}, so this becomes $$(-1)^{|\lam|-\ell(\lam)}\prod_{k\tn{ odd}}\binom{|\mc{L}_{G|_W}(k)|}{i_k(\lam)}\prod_{k\tn{ even}}\multiset{\sum_{k/2^j\tn{ even}}|\mc{L}_{G|_W}(k/2^j)|}{i_k(\lam)}.$$ The products after the $(-1)^{|\lam|-\ell(\lam)}$ count ways to choose a Lyndon heap of size $\lam_i$ for each odd part $\lam_i,$ and a Lyndon heap of some even size $\lam_i/2^j$ for each even part of $\lam,$ such that the Lyndon heaps for repeated odd parts are all distinct (since for odd parts we have a choose and not a multichoose). Then as always, the alternating sum over $W\se V$ pulls out the multisets of Lyndon heaps using all vertices of $G$, completing the proof of Theorem \ref{thm:XG_old_ps}. \qed

\subsection{Proof of Corollary \ref{cor:subgraphs}: \texorpdfstring{$\ol{X}_G$}{XG} and independence polynomials}\label{sec:subgraphs}

We want to prove that knowing $\ol{X}_G$ is equivalent to knowing the multiset of independence polynomials of induced subgraphs of $G$. First assume we are given the Kromatic symmetric function $\ol{X}_G$ of a graph. Then we can apply $\omega$ to it and compute the $\ol{p}$-expansion, which, as proven in \S\ref{sec:omega_XG_old_ps}, can be written in the form $$\omega(\ol{X}_G) = \sum_{W\se V}(-1)^{|V\bs W|} \prod_{k\ge 1}(1+\ol{p}_k)^{b_W(k)}$$ where the exponents $b_W(k)$ are nonnegative integers. For positive integers $K$ and $M$, write $\textsf{Sym}(K,M)$ for the finite dimensional subspace of $\Sym$ of dimension $M^K$ spanned by the basis vectors $\ol{p}_\lam = \ol{p}_1^{i_1(\lam)}\dots \ol{p}_K^{i_K(\lam)}$ where $0\le m_1(\lam),\dots,m_K(\lam) \le M,$ and let $\pi_{K,M}$ be the projection of $\Sym$ onto $\textsf{Sym}(K,M).$ If we choose $M \ge \max\{b_W(k):k\le K,W\se V\}$, then the projection of $\omega(\ol{X}_G)$ onto $\textsf{Sym}(K,M)$ is $$\pi_{K,M}(\omega(\ol{X}_G)) = \sum_{W\se V}(-1)^{|V\bs W|}\prod_{k=1}^K (1+ \ol{p}_k)^{b_W(k)}.$$ We claim now that the vectors $\ol{p}''_\lam \coloneqq (1+\ol{p}_1)^{i_1(\lam)}\dots(1+\ol{p}_K)^{i_K(\lam)}$ with $0\le m_1(\lam),\dots,m_K(\lam) \le M$ also form a basis for $\textsf{Sym}(K,M).$ First note that there are the right number of them, $M^K,$ and that they are all elements of $\textsf{Sym}(K,M).$ Next, fix any ordering on partitions such that $\lam < \mu$ whenever $|\lam| < |\mu|$. Then $\ol{p}''$ is a sum of the form $\ol{p}''_\lam = \ol{p}_\lam + \sum_{\mu < \lam} a_\mu \ol{p}_\mu,$ meaning the transition matrix from $\ol{p}_\lam$'s to $\ol{p}_\lam''$'s is triangular with 1's on the diagonal, so it is invertible, meaning the $\ol{p}_\lam''$'s form a basis. This means that we can compute the $\ol{p}_\lam''$ expansion of $\pi_{K,M}(\omega(\ol{X}_G)),$ which tells us all the exponents $b_W(k)$ for $1\le k \le K.$ By choosing $M$ and $K$ sufficiently large, we can similarly recover as many of the $b_W(k)$ exponents as desired, so the entire set of $b_W(k)$ exponents is uniquely determined by $\ol{X}_G$. But then knowing those exponents gives us the expansion $$\omega(\ol{X}_G) = \sum_{W\se V} (-1)^{|V\bs W|}\prod_{i\ge 1}H_{G|_W}(x_i),$$ since the $b_W(k)$'s were chosen to satisfy $\prod_{k\ge 1}(1+\ol{p}_k)^{b_W(k)} = \prod_{i\ge 1}H_{G|_W}(x_i).$ But now we can apply $\omega$ to each of these $H_{G|_W}(x_i)$ terms to get $$\ol{X}_G = \sum_{W\se V}(-1)^{|V\bs W|}\prod_{i\ge 1}I_{G|_W}(x_i),$$ which tells us the multiset of polynomials $I_{G|_W}(t)$ over all $W \se V,$ as claimed.

Conversely, given the multiset of independence polynomials $I_{G|_W}(t)$ over all induced subgraphs, we can simply multiply over the $x_i$ variables and then take the alternating sum over subsets $W\se V$ to recover $\ol{X}_G$. Thus, $\ol{X}_G$ contains exactly the same set of information about $G$ as the multiset of independence polynomials of induced subgraphs, as claimed in Corollary \ref{cor:subgraphs}. \qed

\section{Bijective proofs of the heap series factorizations}\label{sec:direct}

In this section we will give more direct bijective explanations for our heap series factorizations derived in \S \ref{sec:dirichlet},
\begin{align*}
    \prod_{i\ge 1} I_G(x_i) &= \prod_{k\tn{ odd}}(1+\ol{p}_k)^{|\mc{L}_G(k)|} \prod_{k\tn{ even}}(1+\ol{p}_k)^{-\sum_{k/2^j\tn{ even}}|\mc{L}_G(k/2^j)|}, \\
    \prod_{i\ge 1} H_G(x_i) &= \prod_{k \ge 1}(1+\ol{p}_k)^{\sum_{2^j\mid k}|\mc{L}_G(k/2^j)|}, \\
    \prod_{i\ge 1}I_G(x_i) &= \prod_{k\tn{ odd}}(1+\ol{p}'_k)^{|\mc{L}_G(k)|}\prod_{4\mid k}(1+\ol{p}_k')^{-|\mc{L}_G(k)|} \prod_{k\equiv 2\tn{ (mod 4)}}(1+\ol{p}'_k)^{-|\mc{L}_G(k)|-|\mc{L}_G(k/2)|}, \\
    \prod_{i\ge 1}H_G(x_i) &= \prod_{k\ge 1}(1+\ol{p}'_k)^{|\mc{L}_G(k)|},
\end{align*}
which we used to prove Theorems \ref{thm:XG_old_ps}, \ref{thm:omega_XG_old_ps}, \ref{thm:XG_new_ps}, and \ref{thm:omega_XG_new_ps}, respectively. As in \S \ref{sec:dirichlet}, we will give the proofs in reverse order so that the easier ones come first.

\subsection{Bijective proof of the factorization for Theorem \ref{thm:omega_XG_new_ps}: \texorpdfstring{$\ol{p}'$}{p'}-expansion of \texorpdfstring{$\omega(\ol{X}_G)$}{omega(XG)}}\label{sec:omega_XG_new_ps_2}

We consider first the simplest of our factorizations, 
\begin{equation}\label{eqn:HG_d}
    \prod_{i\ge 1}H_G(x_i) = \prod_{k\ge 1}(1+\ol{p}'_k)^{|\mc{L}_G(k)|},
\end{equation}
which we used to prove Theorem \ref{thm:omega_XG_new_ps}. The left side of (\ref{eqn:HG_d}) is the generating series for ways to assign a heap to each color. Since $$(1+\ol{p}_k')^{|\mc{L}_G(k)|} = \sum_{i\ge 0} \binom{|\mc{L}_G(k)|}{i}(\ol{p}'_k)^i,$$ the right side of (\ref{eqn:HG_d}) is the generating series for ways to choose a \emph{set} of distinct Lyndon heaps and then assign to each of them a nonempty \emph{multiset} of colors, since $\binom{|\mc{L}_G(k)|}{i}$ counts ways to choose $i$ distinct Lyndon heaps of a given size $k,$ and each $\ol{p}'_k$ factor is the generating series for all ways to give a nonempty set of colors to a given vertex set of size $k$ (in this case a size $k$ Lyndon heap).

To go from terms on the left side of (\ref{eqn:HG_d}) to terms on the right, recall from \S \ref{sec:intro} that each color heap can be uniquely decomposed into Lyndon factors (possibly repeated) $L_1\circ \dots \circ L_k$ with $L_1 \ge \dots \ge L_k.$ Then we can find the set of distinct Lyndon heaps that show up in at least one color set. Each of these Lyndon heaps has a multiset of colors assigned to it according to how many times it appears as a factor in each color heap, giving us the corresponding term on the right of (\ref{eqn:HG_d}). For the other direction, the original heap factorization can be recovered from this set of distinct Lyndon heaps and their associated multisets of colors, because we can reorganize the Lyndon heaps by color and then compose them in reverse lexicographic order within each color to find the color heaps. \qed

\subsection{Bijective proof of the factorization for Theorem \ref{thm:XG_new_ps}: \texorpdfstring{$\ol{p}'$}{p'}-expansion of \texorpdfstring{$\omega(\ol{X}_G)$}{omega(XG)}}\label{sec:XG_new_ps_2}

Next we consider the factorization 
\begin{equation}\label{eqn:IG_c}
    \prod_{i\ge 1}I_G(x_i) = \prod_{k\tn{ odd}}(1+\ol{p}'_k)^{|\mc{L}_G(k)|}\prod_{4\mid k}(1+\ol{p}_k')^{-|\mc{L}_G(k)|} \prod_{k\equiv 2\tn{ (mod 4)}}(1+\ol{p}'_k)^{-|\mc{L}_G(k)|-|\mc{L}_G(k/2)|} 
\end{equation}
that we used to prove Theorem \ref{thm:XG_new_ps}. We will first apply $\omega$ to both sides to make everything positive, which requires computing the $\omega(\ol{p}')$-basis. Since $1+\ol{p}'_k$ has leading coefficient 1, by Lemma \ref{lem:omega}, we can apply $\omega$ to it by plugging in $-x_i$ in place of $x_i$ for every $i$ and then taking the reciprocal, giving $$\omega(1+\ol{p}'_k) = \omega\left(\prod_{i\ge 1}\frac1{1-x_i^k}\right) = \prod_{i\ge 1}(1-(-x_i)^k) = \begin{cases}
    \prod_{i\ge 1}(1+x_i^k) & \tn{if }k\tn{ is odd}, \\
    \prod_{i\ge 1}\dfrac1{1+x_i^k + x_i^{2k} + \dots} & \tn{if }k\tn{ is even},
\end{cases}$$ which implies that $$\omega(1+\ol{p}_k') = \begin{cases}
    1+\ol{p}_k & \tn{if }k\tn{ is odd}, \\
    \dfrac1{1+\ol{p}'_k} & \tn{if }k\tn{ is even}.
\end{cases}$$ Thus, applying $\omega$ to both sides of (\ref{eqn:IG_c}) gives
\begin{equation}\label{eqn:HG_c}
    \prod_{i\ge 1}H_G(x_i) = \prod_{k\tn{ odd}}(1+\ol{p}_k)^{|\mc{L}_G(k)|}\prod_{4\mid k}(1+\ol{p}'_k)^{|\mc{L}_G(k)|}\prod_{k\equiv2\tn{ (mod 4)}}(1+\ol{p}'_k)^{|\mc{L}_G(k)|+|\mc{L}_G(k/2)|}.
\end{equation}

As explained in \S \ref{sec:omega_XG_new_ps_2}, the left side of (\ref{eqn:HG_c}) can be thought of as the generating series for ways to choose a set of distinct Lyndon heaps and assign a multiset of colors to each. On the right side of (\ref{eqn:HG_c}):
\begin{itemize}
    \item For each odd $k$ we are choosing a set of distinct Lyndon heaps of size $k$ and a \emph{set} of distinct colors for each Lyndon heap, since $\ol{p}_k$ enumerates ways to assign a nonempty \emph{set} of colors to a size $k$ heap.
    \item For each $4\mid k$ we are choosing a set of distinct Lyndon heaps of size $k$ and a \emph{multiset} of colors for each Lyndon heap, since $\ol{p}'_k$ enumerates ways to assign a nonempty \emph{multiset} of colors to a size $k$ heap.
    \item For each $k\equiv 2\pmod4$ we are choosing a set of distinct Lyndon heaps each of size either $k$ or $k/2$, and then assigning a \emph{multiset} of colors to each, such that each Lyndon heap of size $k/2$ gets each color in its multiset twice, since a $\ol{p}'_k$ term means we need to choose a multiset of colors and use each color $k$ times, which can be done by assigning it to each of the $k/2$ vertices twice. Equivalently, we can require that each Lyndon heap of size $k/2$ gets a multiset of colors where every color has even multiplicity. Note that since $k\equiv 2\pmod4,$ the heaps of size $k/2$ coming from this case are all odd in size.
\end{itemize}  
To go from a term on the right side of (\ref{eqn:HG_d}) to a term on the left, we need to group together repeated occurrences of the same Lyndon heap and take the union of their corresponding color sets. Each even sized Lyndon heap already shows up only once in a right hand side term, and it can already get any multiset of colors, so we really only need to consider the odd sized Lyndon heaps. Each odd Lyndon heap can show up at most once in the $\prod_{k\tn{ odd}}(1+\ol{p}_k)^{|\mc{L}_G(k)|}$ part, in which case it gets a \emph{set} of distinct colors, but it can also show up one additional time in the $\prod_{k\equiv2\tn{ (mod 4)}}(1+\ol{p}'_k)^{|\mc{L}_G(k)|+|\mc{L}_G(k/2)|}$ part, in which case it gets a \emph{multiset} of colors where every color has even multiplicity. If a Lyndon heap $L$ occurs in both places, we can thus say that the full color set assigned to it in the corresponding left hand side term will be the union of the two multisets. (When we take a union of multisets, for repeated elements we will find the multiplicity in the union by adding the multiplicities in the starting multisets.)

To go from a term on the left side of (\ref{eqn:HG_d}) to a term on the right side, we can again start by finding the corresponding set of distinct Lyndon heaps and their corresponding multisets of colors. That already tells us our set of distinct even sized Lyndon heaps and their associated multisets of colors. For each odd sized Lyndon heap $L$, we can break off from its multiset $S(L)$ of colors the \emph{set} of distinct colors that show up in the multiset an odd number of times, giving the corresponding factor from the $\prod_{k\tn{ odd}}(1+\ol{p}_k)^{|\mc{L}_G(k)|}$ part on the right. In the remainder of $S(L),$ each color shows up an even number of times, so we get the corresponding factor from the $\prod_{k\equiv2\tn{ (mod 4)}}(1+\ol{p}'_k)^{|\mc{L}_G(k)|+|\mc{L}_G(k/2)|}$ part. This uniquely tells us which term on the right side of (\ref{eqn:HG_d}) corresponds to a given term on the left side, so we have a bijection. \qed

\subsection{Bijective proof of the factorization for Theorem \ref{thm:omega_XG_old_ps}: \texorpdfstring{$\ol{p}$}{p}-expansion of \texorpdfstring{$\omega(\ol{X}_G)$}{omega(XG)}}\label{sec:omega_XG_old_ps_2}

Next we consider the factorization 
\begin{equation}\label{eqn:HG_b}
    \prod_{i\ge 1} H_G(x_i) = \prod_{k \ge 1}(1+\ol{p}_k)^{\sum_{2^j\mid k}|\mc{L}_G(k/2^j)|}
\end{equation} 
that we used to prove Theorem \ref{thm:omega_XG_old_ps}.

As discussed in \S \ref{sec:omega_XG_new_ps_2}, the left side of (\ref{eqn:HG_b}) can be thought of as the generating series for ways to choose a set of distinct Lyndon heaps then assign a nonempty multiset $S(L)$ of colors to each Lyndon heap $L$. The right side of (\ref{eqn:HG_b}) can be thought of as the generating series for ways to choose a set of distinct pairs $(L,j)$ with $L$ a Lyndon heap and $j$ a nonnegative integer, and then assign to each pair a multiset $S(L,j)$ of colors where every color has multiplicity exactly $2^j.$ Then, we can get an injection from terms on the right to terms on the left by merging together the color sets for all repeated copies of a Lyndon heap $L$, thus assigning to each distinct Lyndon heap $L$ the multiset union $S(L)\coloneqq\bigcup_{j\ge 0} S(L,j)$ of all its colors multisets, where again we add multiplicities when taking a union of multisets.

To show that this injection is in fact a bijection, we can describe the inverse map from terms on the left side of (\ref{eqn:HG_b}) to terms on the right. By \S \ref{sec:omega_XG_new_ps_2}, each term on the left corresponds to a set of distinct Lyndon heaps $L$ with a nonempty multiset $S(L)$ of colors assigned to each. For each distinct color $c$ occurring in $S(L),$ its multiplicity $m$ in $S(L)$ has a unique binary expansion, so $c$ needs to appear in the multisets $S(L,j)$ for precisely the values of $j$ such that $2^j$ shows up in the binary expansion of $m$. That lets us recover the unique term on the right side of (\ref{eqn:HG_b}) corresponding to each term on the left, as needed. \qed

\subsection{Bijective proof of the factorization for Theorem \ref{thm:XG_old_ps}: \texorpdfstring{$\ol{p}$}{p}-expansion of \texorpdfstring{$\ol{X}_G$}{XG}}\label{sec:XG_old_ps_2}

Finally, we give a bijective proof for the factorization 
\begin{equation}\label{eqn:IG_a}
    \prod_{i\ge 1} I_G(x_i) = \prod_{k\tn{ odd}}(1+\ol{p}_k)^{|\mc{L}_G(k)|} \prod_{k\tn{ even}}(1+\ol{p}_k)^{-\sum_{k/2^j\tn{ even}}|\mc{L}_G(k/2^j)|} 
\end{equation}
that we used to prove Theorem \ref{thm:XG_old_ps} (and hence to answer the original question from \cite{crew2023kromatic}). Like in \S \ref{sec:omega_XG_new_ps_2}, we will first make everything positive by applying $\omega$ to both sides, which requires determining how $\omega$ acts on the $\ol{p}$-basis. Since $1+\ol{p}_k$ has leading coefficient 1, by Lemma \ref{lem:omega} we can apply $\omega$ to it by plugging in $-x_i$ in place of each $x_i$ and then taking the reciprocal: $$\omega(1+\ol{p}_k) = \omega\left(\prod_{i\ge 1}(1+x_i^k)\right) = \prod_{i\ge 1}\frac1{1+(-x_i)^k} = \begin{cases}
    \prod_{i\ge 1}(1+x_i^k + x_i^{2k} + \dots) & \tn{if }k\tn{ is odd}, \\
    \prod_{i\ge 1}\dfrac1{1+x_i^k} & \tn{if }k\tn{ is even}.
\end{cases}$$
Equivalently, we can say that $$\omega(1+\ol{p}_k) = \begin{cases}
    1+\ol{p}'_k & \tn{if }k\tn{ is odd}, \\
    \dfrac1{1+\ol{p}_k} & \tn{if }k\tn{ is even}.
\end{cases}$$ Thus, applying $\omega$ to both sides, (\ref{eqn:IG_a}) becomes 
\begin{equation}\label{eqn:HG_a}
    \prod_{i\ge 1} H_G(x_i) = \prod_{k\tn{ odd}} (1+\ol{p}'_k)^{|\mc{L}_G(k)|} \prod_{k\tn{ even}}(1+\ol{p}_k)^{\sum_{k/2^j\tn{ even}}|\mc{L}_G(k/2^j)|}.
\end{equation}
We can now match up terms on the left side of (\ref{eqn:HG_a}) with terms on the right side as follows. 

We can again use \S \ref{sec:omega_XG_new_ps_2} to think of the left side as the generating series for ways to choose a set of distinct Lyndon heaps $L$ and a multiset $S(L)$ of colors for each. On the right side:
\begin{itemize}
    \item For each odd $k$, we need to choose a set of distinct Lyndon heaps of size $k$ and then assign a nonempty \emph{multiset} of colors to each of them.
    \item For each even $k,$ we need to choose a set of distinct Lyndon heaps each of some even size $k/2^j$, and then assign a \emph{set} of distinct colors to each Lyndon heap, such that a heap of size $k/2^j$ receives each color in its set $2^j$ times in order to make the exponent on each color used be $k$.
\end{itemize}
To go from terms on the right to terms on the left, note that in a term on the right side of (\ref{eqn:HG_a}), each odd sized Lyndon heap already only shows up once, but an even sized Lyndon heap $L$ may show up multiple times for different values of $j$, and for each such $j$ it will get a multiset of colors $S(L,j)$ where each color occurs exactly $2^j$ times. Thus, to recover the corresponding term on the left given a term on the right, we can assign to each distinct even heap showing up on the right side the union $S(L) \coloneqq \bigcup_{j\ge 0} S(L,j)$ of all multisets of colors assigned to it, which gives us a set of distinct Lyndon heaps and a multiset of colors for each.

Finally, to go from terms on the left side of (\ref{eqn:HG_a}) to terms on the right, we start with a set distinct Lyndon heaps and their associated multisets of colors. For each odd Lyndon heap, we are done since the right side also has each odd heap showing up only once but getting an arbitrary multiset of colors. For an even Lyndon heap $L$, we need to split its multiset $S(L)$ of colors into multisets $S(L,j)$ where each color in $S(L,j)$ has multiplicity $2^j.$ Like in \S \ref{sec:XG_old_ps_2}, this can always be done in exactly one way because for each color $c$, its multiplicity $m$ in $S(L)$ can uniquely be written in binary, and then $c$ needs to be a part of $S(L,j)$ for precisely the values of $j$ such that $2^j$ appears in the binary expansion of $m.$ Thus, we have a bijection between terms on the left and right sides of (\ref{eqn:HG_a}). \qed

\section{Quasisymmetric power sum expansions}\label{sec:quasi}

In this section, we will compute transition formulas from the classic $p$-basis to the $K$-analogues $\omega(\ol{p}'),$ $\ol{p}',$ $\omega(\ol{p}),$ and $\ol{p},$ and combine them with Theorem \ref{thm:athanasiadis} to compute the expansion of $\ol{X}_G(q)$ in each $K$-theoretic $p$-basis.

As noted in \cite{marberg2023kromatic}, we can write
\begin{equation}\label{eqn:XG_q}
    \ol{X}_G(q) = \sum_\alpha \frac1{[\alpha]_q!}X_{\textsf{Clan}_\alpha(G)}(q),
\end{equation}
where the sum is over all compositions $\alpha$ of length $|V|$ with all parts nonzero, and $$[\alpha]_q!\coloneqq \prod_{i=1}^{\ell(\alpha)}[\alpha_i]_q! = \prod_{i=1}^{\ell(\alpha)} \prod_{j=1}^{\alpha_i}(1+q+\dots+q^{j-1})$$ is the $q$-analogue of $\alpha! \coloneqq \alpha_1!\dots\alpha_{\ell(\alpha)}!,$ specializing to $\alpha!$ at $q=1.$ The reason for (\ref{eqn:XG_q}) is that a proper set coloring of $G$ is the same as a proper coloring of one of its $\alpha$-clan graphs, except that all repeat copies of the same vertex are considered identical, so if we assign $\alpha_i$ colors to them, it does not matter which one is assigned each color, and also based on Marberg's definition from \S \ref{sec:intro}, no ascents between duplicate copies of a vertex should count toward the total number of ascents. Hence, we need to divide by $[\alpha]_q!$ because each $[\alpha_i]_q!$ factor is the generating series in $q$ for the possible numbers of ascents within a clique of size $\alpha_i$ over the $\alpha_i!$ different ways that a given set of colors could be assigned to the vertices of the clique.

Combining (\ref{eqn:XG_q}) with Theorem \ref{thm:athanasiadis} gives 
\begin{equation}\label{eqn:XG_q_2}
    \ol{X}_G(q) = \sum_\lam \frac{p_\lam}{z_\lam}\sum_{|\alpha|=|\lam|}\frac1{[\alpha]_q!}\sum_{\pi \in \Pi_\lam(V^\alpha)} \ \sum_{o\in \mc{AO}^*(\textsf{Clan}_\alpha(G),\pi)}q^{\tn{asc}(o)},
\end{equation} 
where $V^\alpha$ is the vertex set of $\textsf{Clan}_\alpha(G).$ We can equivalently say that $o$ is a $G$-heap $H$ of type $\alpha$ and $\pi$ is a factorization $H = P_1\circ \dots \circ P_{\ell(\lam)}$ of $H$ into a list $P = (P_1,\dots,P_{\ell(\lam)})$ of pyramids with $|P_i|=\lam_i.$ Write $\mc{P}_G^{\tn{all }V}(\lam)$ for the set of all such pyramid lists $P$ that cover all vertices of $G$. Also, write $\tn{asc}(P)$ for the number of ascents of the corresponding heap $H$ such that ascents between duplicate copies of the same vertex do not count. Then we can re-frame (\ref{eqn:XG_q_2}) as
\begin{equation}\label{eqn:XG_q_final}
    \ol{X}_G(q) = \sum_\lam \frac{p_\lam}{z_\lam} \sum_{P\in \mc{P}^{\tn{all }V}_G(\lam)} q^{\tn{asc}(P)}.
\end{equation}

In the rest of this section, we will follow the same ordering as in \S \ref{sec:dirichlet} and \S \ref{sec:direct}, giving the $\omega(\ol{p}')$-expansion of $\ol{X}_G(q)$ in \S \ref{sec:omega_new_ps_q}, the $\ol{p}'$-expansion in \S \ref{sec:new_ps_q}, the $\omega(\ol{p})$-expansion in \S \ref{sec:omega_old_ps_q}, and finally the $\ol{p}$-expansion in \S \ref{sec:old_ps_q}.

\subsection{\texorpdfstring{$\ol{p}'$}{p'}-expansion for \texorpdfstring{$\omega(\ol{X}_G(q))$}{omega(XG(q))}}\label{sec:omega_new_ps_q}

We will first find the transition formula from the $p$ to $\ol{p}'$ basis. By (\ref{eqn:H_E_factors}), 
\begin{equation}\label{eqn:p_H}
    1+\ol{p}'_k = \prod_{i\ge 1}(1+x_i^k + x_i^{2k} + \dots) = \sum_{n\ge 1} h_n[p_k] = H(1)[p_k], 
\end{equation}
where $H(t)\coloneqq\sum_{t\ge 0}h_nt^n$ is the generating series for the homogeneous symmetric functions introduced in \S \ref{sec:background}, and the square brackets indicate \emph{\tb{\tcb{plethystic evaluation}}} $$f[p_k] \coloneqq f(x_1^k, x_2^k,\dots),$$ where we plug in the monomials $x_i^k$ of $p_k=x_1^k + x_2^k + \dots$ in place of the usual variables $x_i.$ Since we saw in \S \ref{sec:background} that $\ln(H(t)) = P(t)$, taking logarithms on both sides of (\ref{eqn:p_H}) gives $$\ln(1+\ol{p}_k') = P(1)[p_k] = \sum_{n\ge 1}\frac{p_n[p_k]}{n} = \sum_{n\ge 1}\frac{p_{nk}}{n}.$$ Now we use M\"obius inversion to isolate $p_k$, giving 
\begin{equation}\label{eqn:p_transition_1}
    p_k = \sum_{n\ge 1} \frac{p_{nk}}{n}\sum_{d\mid n}\mu(d) = \sum_{d\ge 1}\frac{\mu(d)}{d}\sum_{j\ge 1}\frac{p_{jdk}}{j} = \sum_{d\ge 1}\frac{\mu(d)}{d}\ln(1+\ol{p}_{dk}'),
\end{equation}
where we made the change of variables $j = n/d.$ We can expand the logarithm in (\ref{eqn:p_transition_1}) to get $$p_k = \sum_{d\ge 1}\frac{\mu(d)}{d}\sum_{n\ge 1}\frac{(-1)^{n+1}}{n}(\ol{p}'_{dk})^n.$$ Multiplying over the parts of a partition $\lam$ gives
\begin{equation}\label{eqn:p_coeff_1}
    p_\lam = \sum_{d,n}\prod_{i=1}^{\ell(\lam)}\frac{\mu(d_i)(-1)^{n_i+1}}{d_i n_i} \ol{p}'_{(d\lam)^n},
\end{equation} 
where $d=(d_1,\dots,d_{\ell(\lam)})$ and $n=(n_1,\dots,n_{\ell(\lam)})$ are now compositions of length $\ell(\lam)$ with all parts nonzero, and $(d\lam)^n$ is the partition where the $i$th part $\lam_i$ of $\lam$ is multiplied by $d_i$ and repeated $n_i$ times, and then the parts are rearranged in nonincreasing order.

Plugging (\ref{eqn:p_transition_1}) and (\ref{eqn:p_coeff_1}) into (\ref{eqn:XG_q_final}) gives

\begin{prop}\label{prop:XG_q_1}
If $G$ is the incomparability graph of a unit interval order, $$\omega(\ol{X}_G(q)) = \sum_\lam \frac1{z_\lam}\sum_{P \in \mc{P}^{\tn{all }V}_G(\lam)} q^{\tn{asc}(P)}\prod_{i=1}^{\ell(\lam)}\sum_{d\ge 1}\frac{\mu(d)}{d}\ln(1+\ol{p}'_{d\lam_i}),$$ and the $\ol{p}'$-coefficients for $\omega(\ol{X}_G(q))$ are $$\displaystyle{[\ol{p}'_\lam]\omega(\ol{X}_G(q)) = \sum_{(d\lam')^n=\lam}\frac1{z_{\lam'}}\sum_{P \in \mc{P}^{\tn{all }V}_G(\lam')} q^{\tn{asc}(P)}\prod_{i=1}^{\ell(\lam')}\frac{\mu(d_i)(-1)^{n_i+1}}{d_i n_i}}.$$
\end{prop}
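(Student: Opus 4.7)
The plan is to derive both identities in Proposition \ref{prop:XG_q_1} by directly substituting the two $p$-to-$\ol{p}'$ transition formulas already established in this section into the power sum expansion (\ref{eqn:XG_q_final}) coming from Athanasiadis's Theorem \ref{thm:athanasiadis}. No new combinatorial machinery should be needed; the proof is essentially formal.

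For the first claim, I would start from (\ref{eqn:XG_q_final}), factor $p_\lam = \prod_{i=1}^{\ell(\lam)} p_{\lam_i}$, and replace each $p_{\lam_i}$ using (\ref{eqn:p_transition_1}), which expresses $p_k$ as $\sum_{d\ge 1}\frac{\mu(d)}{d}\ln(1+\ol{p}'_{dk})$. Substituting this product directly into (\ref{eqn:XG_q_final}) produces exactly the product-of-logarithms expression stated in the proposition.

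For the second claim, which gives an explicit formula for the coefficient of $\ol{p}'_\lam$, I would instead use the fully expanded transition formula (\ref{eqn:p_coeff_1}), which writes $p_{\lam'}$ as a sum indexed by pairs of compositions $(d,n)$ of length $\ell(\lam')$ with all parts nonzero, whose summands are scalar multiples of $\ol{p}'_{(d\lam')^n}$. After renaming the outer summation variable in (\ref{eqn:XG_q_final}) to $\lam'$ and substituting, $\omega(\ol{X}_G(q))$ becomes a triple sum over $\lam'$, $d$, and $n$, and the $\ol{p}'_\lam$ coefficient is extracted simply by restricting attention to triples with $(d\lam')^n = \lam$, yielding the stated formula.

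I do not expect a substantive obstacle here, since all the nontrivial content has already been developed upstream: (\ref{eqn:XG_q_final}) encodes the Athanasiadis power sum expansion of $\omega(\ol{X}_G(q))$, while (\ref{eqn:p_transition_1}) and (\ref{eqn:p_coeff_1}) encode the change of basis from $p$ to $\ol{p}'$ via M\"obius inversion on divisor sums. The one point requiring some care is bookkeeping: one must distinguish the source partition $\lam'$ indexing the pyramid list $P \in \mc{P}^{\tn{all }V}_G(\lam')$ from the target partition $\lam$ indexing the output basis element $\ol{p}'_\lam$, and keep in mind that the same $\lam$ can arise from many different $(\lam', d, n)$ triples, which is precisely why the final coefficient formula is itself a sum.
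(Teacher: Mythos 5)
Your proposal is correct and is essentially the paper's own proof: the paper literally states ``Plugging (\ref{eqn:p_transition_1}) and (\ref{eqn:p_coeff_1}) into (\ref{eqn:XG_q_final}) gives'' Proposition~\ref{prop:XG_q_1}, and your writeup just makes that substitution explicit, including the crucial bookkeeping point that the target partition $\lam$ indexing $\ol{p}'_\lam$ must be kept separate from the source partitions $\lam'$ indexing the pyramid lists. The only further remark worth making is that (\ref{eqn:XG_q_final}) as printed in the paper has $\ol{X}_G(q)$ on the left, whereas Athanasiadis's Theorem~\ref{thm:athanasiadis} is a formula for $\omega(X_G(q))$, so (\ref{eqn:XG_q_2}) and (\ref{eqn:XG_q_final}) really give expansions of $\omega(\ol{X}_G(q))$ -- a typo in the paper that you implicitly (and correctly) account for when you describe (\ref{eqn:XG_q_final}) as ``the Athanasiadis power sum expansion of $\omega(\ol{X}_G(q))$.''
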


Thus, unlike $\omega(\ol{X}_G)$ and $\omega(X_G(q))$, $\omega(\ol{X}_G(q))$ is in general not $\ol{p}'$-positive, since the polynomial in $q$ given in Proposition \ref{prop:XG_q_1} does not necessarily have all positive coefficients.

We can see how Proposition \ref{prop:XG_q_1} reduces to Theorem \ref{thm:omega_XG_new_ps} when $q=1$ as follows. When $q=1,$ Proposition \ref{prop:XG_q_1} becomes 
\begin{equation}\label{eqn:XG_from_q_1}
    \omega(\ol{X}_G) = \sum_\lam \frac{|\mc{P}^{\tn{all }V}_G(\lam)|}{z_\lam}\prod_{i=1}^{\ell(\lam)}\sum_{d\ge 1}\frac{\mu(d)}{d}\ln(1+\ol{p}'_{d\lam_i}).
\end{equation}
Write $\mc{P}_G(\lam)$ for the set of all lists $(P_1,\dots,P_{\ell(\lam)})$ of pyramids with $|P_i|=\lam_i,$ without the restriction that every vertex be used in at least one pyramid. Then we can rewrite (\ref{eqn:XG_from_q_1}) in terms of the $\mc{P}_G(\lam)$'s by using the same trick as before of taking an alternating sum over subsets of $V$:
$$\omega(\ol{X}_G) = \sum_{W\se V}(-1)^{|V\bs W|}\sum_\lam \frac{|\mc{P}_{G|_W}(\lam)|}{z_\lam}\prod_{i=1}^{\ell(\lam)}\sum_{d\ge 1}\frac{\mu(d)}{d}\ln(1+\ol{p}'_{d\lam_i}).$$ Now we can factor the part corresponding to a given $\lam$ and $W$, because $|\mc{P}_G(\lam)| = \prod_{i=1}^{\ell(\lam)}|\mc{P}_G(\lam_i)|,$ since the pyramids in the list can now be chosen independently of each other. Doing so gives $$\omega(\ol{X}_G) = \sum_{W\se V}(-1)^{|V\bs W|}\sum_\lam \frac1{z_\lam}\prod_{i=1}^{\ell(\lam)}|\mc{P}_{G|_W}(\lam_i)|\sum_{d\ge 1}\frac{\mu(d)}{d}\ln(1+\ol{p}'_{d\lam_i}).$$ Dividing by the factorials $i_k(\lam)!$ in $1/z_\lam$ is equivalent to saying that the pyramids corresponding to repeated parts of $\lam$ should be treated as unordered, meaning we can treat the whole list of pyramids as unordered. The remaining $1/\lam_i$ terms in $1/z_\lam$ can be grouped with their corresponding $|\mc{P}_{G|_W}(\lam_i)|$ terms. Letting $j=\lam_i$, the term of $\omega(\ol{X}_G)$ for a given $W$ is thus the sum of all products of unordered sets of terms of the form $$\frac1j|\mc{P}_{G|_W}(j)|\sum_{d\ge 1}\frac{\mu(d)}{d}\ln(1+\ol{p}_{dj}')$$ for $k\ge 1.$ But the generating series for all products of unordered sets of terms of a certain type is equivalent to the exponential of the sum of all the possible terms of that type, so $$\omega(\ol{X}_G) = \sum_{W \se V}(-1)^{|V\bs W|}\exp\left(\sum_{j\ge 1}\frac1j|\mc{P}_{G|_W}(j)|\sum_{d\ge 1}\frac{\mu(d)}{d}\ln(1+\ol{p}_{dj}')\right).$$ Reorganize the terms according to the value of $k=dj$ gives $$\omega(\ol{X}_G) = \sum_{W \se V}(-1)^{|V\bs W|}\exp\left(\sum_{k\ge 1}\ln(1+\ol{p}'_{k})\cdot \frac1k\sum_{d\mid k}\mu(d)\left|\mc{P}_{G|_W}\left(\frac kd\right)\right|\right).$$ But as we saw in equation (\ref{eqn:L_mobius}) from \S \ref{sec:XG_new_ps}, $$\frac1k\sum_{d\mid k} \mu(d)\left|\mc{P}_{G|_W}\left(\frac kd\right)\right| = |\mc{L}_{G|_W}(k)|,$$ hence our equation becomes 
\begin{align*}
    \omega(\ol{X}_G) &= \sum_{W\se V}(-1)^{|V\bs W|}\exp\left(\sum_{k\ge 1}\ln(1+\ol{p}_k')\cdot|\mc{L}_{G|_W}(k)|\right) \\
    &= \sum_{W\se V}(-1)^{|V\bs W|}\prod_{k\ge 1}(1+\ol{p}_k')^{|\mc{L}_{G|_W}(k)|},
\end{align*}
matching the factorization from \S \ref{sec:omega_XG_new_ps} and \S\ref{sec:omega_XG_new_ps_2} that we used to prove Theorem \ref{thm:omega_XG_new_ps}.

\subsection{\texorpdfstring{$\ol{p}'$}{p'}-expansion for \texorpdfstring{$\ol{X}_G(q)$}{XG(q)}}\label{sec:new_ps_q}

Since $\omega(p_\lam) = (-1)^{|\lam|-\ell(\lam)},$ we can use the same $p$-to-$\ol{p}'$ transition formula, except that each $\ol{p}'_\lam$ term will be multiplied by $(-1)^{|\lam'|-\ell(\lam')}$ for whichever $\lam'$ it comes from. So we can immediately deduce:

\begin{prop}\label{prop:new_p_XG_q}
If $G$ is the incomparability graph of a unit interval order, $$\ol{X}_G(q) = \sum_\lam \frac{(-1)^{|\lam|-\ell(\lam)}}{z_\lam}\sum_{P \in \mc{P}^{\tn{all }V}_G(\lam)} q^{\tn{asc}(P)}\prod_{i=1}^{\ell(\lam)}\sum_{d\ge 1}\frac{\mu(d)}{d}\ln(1+\ol{p}'_{d\lam_i}),$$ and the $\ol{p}'$-coefficients for $\ol{X}_G(q)$ are
$$\displaystyle{[\ol{p}'_\lam]\ol{X}_G(q) = \sum_{(d\lam')^n=\lam}\frac{(-1)^{|\lam'|-\ell(\lam')}}{z_{\lam'}}\sum_{P\in \mc{P}_G(\lam')} q^{\tn{asc}(P)}}\prod_{i=1}^{\ell(\lam')}\frac{\mu(d_i)(-1)^{n_i+1}}{d_i n_i}.$$
\end{prop}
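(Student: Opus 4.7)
The plan is to derive Proposition~\ref{prop:new_p_XG_q} by repeating the argument of \S\ref{sec:omega_new_ps_q} with a single sign twist inserted at the level of the $p$-basis. The key observation is that $\omega$ is a ring involution with $\omega(p_\lam) = (-1)^{|\lam|-\ell(\lam)}p_\lam$, so applying $\omega$ to both sides of (\ref{eqn:XG_q_final}) yields the $p$-expansion
$$\ol{X}_G(q) = \sum_\lam \frac{(-1)^{|\lam|-\ell(\lam)}\,p_\lam}{z_\lam}\sum_{P\in \mc{P}^{\tn{all }V}_G(\lam)}q^{\tn{asc}(P)},$$
which differs from the expansion exploited in \S\ref{sec:omega_new_ps_q} only by the partition-dependent sign. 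This is the content of the remark that one can use the same $p$-to-$\ol{p}'$ transition formula, except that each $\ol{p}'_\lam$ term is multiplied by $(-1)^{|\lam'|-\ell(\lam')}$ inherited from the outer $\lam'$-index. Note that one cannot simply apply $\omega$ directly to the $\ol{p}'$-expansion of $\omega(\ol{X}_G(q))$ in Proposition~\ref{prop:XG_q_1}, because that would produce an expansion in $\omega(\ol{p}')$ rather than $\ol{p}'$; the twist must be done at the $p$-basis level, before substituting the transition formula.

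From here I would proceed exactly as in \S\ref{sec:omega_new_ps_q}. First, I would substitute (\ref{eqn:p_transition_1}) for each factor $p_{\lam_i}$ in $p_\lam=\prod_i p_{\lam_i}$, obtaining the first displayed formula of Proposition~\ref{prop:new_p_XG_q} after pulling the overall sign $(-1)^{|\lam|-\ell(\lam)}$ out in front of the $\prod_i\sum_{d\ge 1}$ product. Second, to read off $[\ol{p}'_\lam]\ol{X}_G(q)$, I would use the partition-level form (\ref{eqn:p_coeff_1}) and collect all contributions whose output partition $(d\lam')^n$ equals the fixed target $\lam$. Because the extra sign $(-1)^{|\lam'|-\ell(\lam')}$ depends only on the outer summation index $\lam'$, it factors cleanly onto each summand of the coefficient formula, producing exactly the expression claimed.

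The main obstacle is purely notational bookkeeping: one must carefully distinguish the outer summation index $\lam'$, which runs over partitions indexing the $p$-expansion of $\ol{X}_G(q)$ and carries the sign $(-1)^{|\lam'|-\ell(\lam')}$, from the inner target partition $\lam$ of the $\ol{p}'$-coefficient being extracted, which is determined by $\lam'$ together with the transition data $d,n$ via the relation $(d\lam')^n=\lam$. Beyond this, no new analytic, combinatorial, or algebraic tools are needed: the calculations of \S\ref{sec:omega_new_ps_q} carry over essentially verbatim, and the only substantive change is the insertion of the sign factor at the outset.
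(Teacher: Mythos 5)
Your proposal is correct and matches the paper's own (very brief) derivation: insert the $\omega$-sign $(-1)^{|\lam|-\ell(\lam)}$ at the $p$-basis level, then reuse the same $p$-to-$\ol{p}'$ transition (\ref{eqn:p_transition_1}), (\ref{eqn:p_coeff_1}) from \S\ref{sec:omega_new_ps_q}. Your explicit caution that one cannot simply apply $\omega$ to Proposition \ref{prop:XG_q_1} (since that would land in the $\omega(\ol{p}')$-basis, not $\ol{p}'$) is exactly the point the paper's one-line remark glosses over, and your reading of (\ref{eqn:XG_q_final}) as giving the $p$-expansion of $\omega(\ol{X}_G(q))$ is the intended one.
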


We can relate Proposition \ref{prop:new_p_XG_q} to Theorem \ref{thm:XG_new_ps} by following the same approach as in \S \ref{sec:omega_new_ps_q}. We set $q=1$ and replace each $\mc{P}_G^{\tn{all }V}(\lam)$ term with an alternating sum of $\mc{P}_{G|_W}(\lam)$ terms to get $$\ol{X}_G = \sum_{W\se V}(-1)^{|V\bs W|} \sum_{\lam}\frac{(-1)^{|\lam|-\ell(\lam)}}{z_\lam}|\mc{P}_{G|_W}(\lam)|\prod_{i=1}^{\ell(\lam)}\sum_{d\ge 1}\frac{\mu(d)}{d}\ln(1+\ol{p}'_{d\lam_i}).$$ Now since $(-1)^{|\lam|-\ell(\lam)} = \prod_{i=1}^{\ell(\lam)}(-1)^{\lam_i-1},$ we can again factor the part for a given $W$ and $\lam$ to get $$\ol{X}_G = \sum_{W\se V}(-1)^{|V\bs W|}\sum_\lam \frac1{z_\lam}\prod_{i=1}^{\ell(\lam)} (-1)^{\lam_i-1}|\mc{P}_{G|_W}(\lam)|\sum_{d\ge 1}\mu(d)\ln(1+\ol{p}'_{d\lam_i}).$$ By the same reasoning as in \S \ref{sec:omega_new_ps_q}, we can rewrite this as an exponential and then reorganize the terms based on the value of $k = dj$ to get
\begin{align*}
    \ol{X}_G &= \sum_{W \se V}(-1)^{|V\bs W|}\exp\left(\sum_{j\ge 1}\frac{(-1)^{j-1}}{j}|\mc{P}_{G|_W}(j)|\sum_{d\ge 1}\frac{\mu(d)}{d}\ln(1+\ol{p}'_{dj})\right) \\
    &= \sum_{W \se V}(-1)^{|V\bs W|}\exp\left(\sum_{k\ge 1}\ln(1+\ol{p}'_{k})\cdot\frac1k\sum_{d\ge 1} (-1)^{k/d-1}\frac{\mu(d)}{d}\left|\mc{P}_{G|_W}\left(\frac kd\right)\right|\right).
\end{align*}
This is the same M\"obius inversion formula as in (\ref{eqn:c_mobius}) from Claim \ref{claim:c}, so it follows that $$\ol{X}_G = \sum_{W\se V}(-1)^{|V\bs W|}\exp\left(\sum_{k\ge 1}\ln(1+\ol{p}'_k)\cdot c_W(k)\right) = \sum_{W \se V}(-1)^{|V\bs W|}\prod_{k\ge 1}(1+\ol{p}_k')^{c_W(k)},$$ recovering the factorization used in \S \ref{sec:XG_new_ps} and \S\ref{sec:XG_new_ps_2} to prove Theorem \ref{thm:XG_new_ps}.

\subsection{\texorpdfstring{$\ol{p}$}{p}-expansion for \texorpdfstring{$\omega(\ol{X}_G(q))$}{omega(XG(q))}}\label{sec:omega_old_ps_q}

We use a similar strategy to \S \ref{sec:omega_new_ps_q} to find the transition formula from the $p$-basis to the $\ol{p}$-basis. By (\ref{eqn:H_E_factors}), 
\begin{equation}\label{eqn:p_E}
    1+\ol{p}_k = \prod_{i\ge 1}(1+x_i^k) = \sum_{n\ge 1}e_n[p_k] = E(1)[p_k].
\end{equation} 
Recalling from \S \ref{sec:background} that $\ln(E(t)) = -P(-t)$, taking logarithms on both sides of (\ref{eqn:p_E}) gives 
\begin{equation}\label{eqn:p_transition_2}
    \ln(1+\ol{p}_k) = -P(-1)[p_k] = \sum_{n\ge 1} \frac{(-1)^{n+1}p_{n}[p_k]}{n} = \sum_{n\ge 1} \frac{(-1)^{n+1}p_{nk}}{n}.
\end{equation} Now we can isolate $p_k$ using the Dirichlet inverse $\hat{\mu}$ of the sequence $(-1)^{n+1}$ from Lemma \ref{lem:dirichlet}: $$p_k = \sum_{n\ge 1}\frac{p_{nk}}{n}\sum_{d\mid n}(-1)^{n/d+1}\hat{\mu}(d) = \sum_{d\ge 1}\frac{\hat{\mu}(d)}{d}\sum_{j\ge 1}\frac{(-1)^{j+1}p_{jdk}}{j} = \sum_{n\ge 1}\frac{\hat{\mu}(d)}{d}\ln(1+\ol{p}_{dk}),$$ using the substitution $j=n/d$ as before. Expanding the logarithm power series and then multiplying over the parts of an arbitrary partition $\lam$ gives 
\begin{equation}\label{eqn:old_ps}
    p_\lam = \sum_{d,n}\prod_{i=1}^{\ell(\lam)}\frac{\hat{\mu}(d_i)(-1)^{n_i+1}}{d_in_i}\ol{p}_{(d\lam)^n},
\end{equation} 
where again $d = (d_1,\dots,d_{\ell(\lam)})$ and $n = (n_1,\dots,n_{\ell(\lam)})$ are compositions with all parts positive. Plugging (\ref{eqn:p_transition_2}) and (\ref{eqn:old_ps}) into (\ref{eqn:XG_q_final}) gives:

\begin{prop}\label{prop:omega_old_ps_q}
    If $G$ is the incomparability graph of a unit interval order, $$\omega(\ol{X}_G(q)) = \sum_\lam \frac1{z_\lam}\sum_{P \in \mc{P}^{\tn{all }V}_G(\lam)} q^{\tn{asc}(P)}\prod_{i=1}^{\ell(\lam)}\sum_{d\ge 1}\frac{\hat{\mu}(d)}{d}\ln(1+\ol{p}_{d\lam_i}),$$ and the $\ol{p}$-coefficients for $\omega(\ol{X}_G(q)$ are $$\displaystyle{[\ol{p}_\lam]\omega(\ol{X}_G(q)) = \sum_{(d\lam')^n=\lam}\frac{1}{z_{\lam'}}\sum_{P\in \mc{P}_G(\lam')} q^{\tn{asc}(P)}}\prod_{i=1}^{\ell(\lam')}\frac{\hat{\mu}(d_i)(-1)^{n_i+1}}{d_i n_i},$$ where $\hat{\mu}(n) = \begin{cases}
            \mu(n) & \tn{if }n\tn{ is odd}, \\
            2^{j-1}\mu(n/2^j) &\tn{if }n\tn{ is even with }n/2^j\tn{ odd}.
        \end{cases}$
\end{prop}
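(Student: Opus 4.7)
The plan is to mirror the derivation in \S\ref{sec:omega_new_ps_q}, substituting the $p$-to-$\ol{p}$ transition formula for the $p$-to-$\ol{p}'$ transition formula used there. First I would recognize the defining product for $\ol{p}_k$ as a plethystic evaluation: by (\ref{eqn:H_E_factors}), $1+\ol{p}_k = \prod_{i\ge 1}(1+x_i^k) = E(1)[p_k]$. Taking logarithms and invoking the identity $\ln E(t) = -P(-t)$ recalled in \S\ref{sec:background} then produces
$$\ln(1+\ol{p}_k) \;=\; -P(-1)[p_k] \;=\; \sum_{n\ge 1}\frac{(-1)^{n+1}}{n}\,p_{nk},$$
the analog of (\ref{eqn:p_transition_1}) but with alternating signs coming from the $\ln E$ versus $\ln H$ identity.

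The crucial step is to invert this relation for $p_k$. Because the coefficient sequence is $(-1)^{n+1}$ rather than the constant sequence $1$, ordinary M\"obius inversion will not work; I would instead use its Dirichlet inverse $\hat\mu$, which was computed for exactly this purpose in Lemma \ref{lem:dirichlet}. Dirichlet convolution then yields $p_k = \sum_{d\ge 1}\frac{\hat\mu(d)}{d}\ln(1+\ol{p}_{dk})$. Expanding the logarithm as a power series in $\ol{p}_{dk}$ and multiplying over the parts of a partition $\lam$, indexed by compositions $d=(d_1,\dots,d_{\ell(\lam)})$ and $n=(n_1,\dots,n_{\ell(\lam)})$ with positive entries, then produces equation (\ref{eqn:old_ps}) for $p_\lam$ expanded in the $\ol{p}$-basis.

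Finally, substituting (\ref{eqn:old_ps}) into (\ref{eqn:XG_q_final}) gives the first formula in the proposition directly. The coefficient formula for $[\ol{p}_\lam]\omega(\ol{X}_G(q))$ then follows by re-indexing the resulting triple sum: I would collect the contributions to each fixed partition $\lam$ by summing over all triples $(\lam', d, n)$ for which $(d\lam')^n = \lam$, which picks out exactly the partitions $\lam'$ and compositions $d,n$ of length $\ell(\lam')$ that can produce $\ol{p}_\lam$ under the transition.

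The only genuine obstacle is the Dirichlet inversion step, where the 2-adic valuation of indices makes $\hat\mu$ strictly more intricate than $\mu$ and forces the piecewise formula in the proposition's statement. That complication is entirely absorbed by Lemma \ref{lem:dirichlet}, however, so after that point the rest of the argument is a formal power series manipulation parallel to \S\ref{sec:omega_new_ps_q}, and no new ideas beyond those already used for the $\ol{p}'$-expansion are required.
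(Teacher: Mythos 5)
Your proposal reproduces the paper's own argument in \S\ref{sec:omega_old_ps_q} essentially step for step: rewriting $1+\ol{p}_k = E(1)[p_k]$, applying $\ln E(t) = -P(-t)$, inverting with the Dirichlet inverse $\hat\mu$ from Lemma~\ref{lem:dirichlet}, expanding the log and multiplying over parts to obtain (\ref{eqn:old_ps}), and substituting into (\ref{eqn:XG_q_final}). This is correct and matches the paper's proof.
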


We can recover the factorization for Theorem \ref{thm:omega_XG_old_ps} from Proposition \ref{prop:omega_old_ps_q} using the same approach as in \S \ref{sec:omega_new_ps_q} and \S \ref{sec:new_ps_q}. We can take an alternating sum over $W \se V$ and factor over the parts of $\lam$ to get $$\omega(\ol{X}_G) = \sum_{W \se V}(-1)^{|V\bs W|}\frac1{z_\lam}\prod_{i=1}^{\ell(\lam)}|\mc{P}_{G|_W}(\lam_i)|\sum_{d\ge 1}\frac{\hat{\mu}(d)}{d}\ln(1+\ol{p}_{d\lam_i}),$$ and then we can interpret this as an exponential and reorganize the terms to get
\begin{align*}
    \omega(\ol{X}_G) &= \sum_{W \se V}(-1)^{|V\bs W|}\exp\left(\sum_{j\ge 1}\frac1j|\mc{P}_{G|_W}(j)|\sum_{d\ge 1}\frac{\hat{\mu}(d)}{d}\ln(1+\ol{p}_{dj})\right) \\
    &= \sum_{W \se V}(-1)^{|V\bs W|}\exp\left(\sum_{k\ge 1}\ln(1+\ol{p}_{k})\cdot\frac1k\sum_{d\ge 1} \hat{\mu}(d)\left|\mc{P}_{G|_W}\left(\frac kd\right)\right|\right).
\end{align*}
Since this is the same $\hat{\mu}$ from Lemma \ref{lem:dirichlet}, by the same M\"obius inversion argument as in \S \ref{sec:omega_XG_old_ps}, this becomes $$\omega(\ol{X}_G) = \sum_{W\se V}(-1)^{|V\bs W|}\exp\left(\sum_{k\ge 1}\ln(1+\ol{p}_k)\cdot b_W(k)\right) = \sum_{W\se V}(-1)^{|V\bs W|}\prod_{k\ge 1}(1+\ol{p}_k)^{b_W(k)},$$ matching the factorization from \S \ref{sec:omega_XG_old_ps} and \S \ref{sec:omega_XG_old_ps_2} that we used to prove Theorem \ref{thm:omega_XG_old_ps}.

\subsection{\texorpdfstring{$\ol{p}$}{p}-expansion for \texorpdfstring{$\ol{X}_G(q)$}{XG(q)}}\label{sec:old_ps_q}

By the similar reasoning to \S \ref{sec:new_ps_q}, we can use the fact that $\omega(p_\lam) = (-1)^{|\lam|-\ell(\lam)}p_\lam$ together with the derivation in \S \ref{sec:omega_old_ps_q} to get:

\begin{prop}\label{prop:old_ps_q}
    If $G$ is the incomparability graph of a unit interval order, $$\ol{X}_G(q) = \sum_\lam \frac{(-1)^{|\lam|-\ell(\lam)}}{z_\lam}\sum_{P \in \mc{P}^{\tn{all }V}_G(\lam)} q^{\tn{asc}(P)}\prod_{i=1}^{\ell(\lam)}\sum_{d\ge 1}\frac{\hat{\mu}(d)}{d}\ln(1+\ol{p}_{d\lam_i}),$$ and the $\ol{p}$-coefficients for $\ol{X}_G(q)$ are $$\displaystyle{[\ol{p}_\lam]\omega(\ol{X}_G(q)) = \sum_{(d\lam')^n=\lam}\frac{(-1)^{|\lam'|-\ell(\lam')}}{z_{\lam'}}\sum_{P\in \mc{P}_G(\lam')} q^{\tn{asc}(P)}}\prod_{i=1}^{\ell(\lam')}\frac{\hat{\mu}(d_i)(-1)^{n_i+1}}{d_i n_i},$$ where $\hat{\mu}(n) = \begin{cases}
            \mu(n) & \tn{if }n\tn{ is odd}, \\
            2^{j-1}\mu(n/2^j) &\tn{if }n\tn{ is even with }n/2^j\tn{ odd}.
        \end{cases}$
\end{prop}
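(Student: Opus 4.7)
The plan is to mimic the derivation of Proposition \ref{prop:new_p_XG_q} from Proposition \ref{prop:XG_q_1} in \S \ref{sec:new_ps_q}, but now with the $p$-to-$\ol{p}$ transition formula derived in \S \ref{sec:omega_old_ps_q} in place of the $p$-to-$\ol{p}'$ transition. The key ingredient is the classical identity $\omega(p_\lam) = (-1)^{|\lam|-\ell(\lam)} p_\lam$, which lets us convert between the $p$-expansions of $\ol{X}_G(q)$ and $\omega(\ol{X}_G(q))$ by a sign flip on each $p_\lam$-coefficient.

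First I would apply $\omega$ to (\ref{eqn:XG_q_final}) and use $\omega(p_\lam) = (-1)^{|\lam|-\ell(\lam)} p_\lam$ to obtain the $p$-expansion
$$\ol{X}_G(q) = \sum_\lam \frac{(-1)^{|\lam|-\ell(\lam)}\, p_\lam}{z_\lam} \sum_{P \in \mc{P}_G^{\tn{all }V}(\lam)} q^{\tn{asc}(P)},$$
which is identical to (\ref{eqn:XG_q_final}) except for the extra sign. Then I would substitute the transition formula (\ref{eqn:p_transition_2}), namely $p_k = \sum_{d\ge 1} \frac{\hat{\mu}(d)}{d}\ln(1+\ol{p}_{dk})$, into each $p_{\lam_i}$ factor; this yields the first displayed formula of Proposition \ref{prop:old_ps_q} directly.

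To extract the coefficient of $\ol{p}_\lam$, I would instead substitute the fully expanded version (\ref{eqn:old_ps}), which writes $p_{\lam'}$ as a sum over pairs of compositions $(d,n)$ of length $\ell(\lam')$ with all parts positive, weighted by $\prod_i \hat{\mu}(d_i)(-1)^{n_i+1}/(d_i n_i)$, of the basis elements $\ol{p}_{(d\lam')^n}$. Collecting the terms where $(d\lam')^n = \lam$ and carrying along the sign $(-1)^{|\lam'|-\ell(\lam')}$ from the $p$-expansion above immediately produces the second displayed formula of the proposition.

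There is no real obstacle here: the entire derivation amounts to reusing the transition formula already proved in \S \ref{sec:omega_old_ps_q} and applying the sign rule for $\omega$ on the $p$-basis, exactly as in \S \ref{sec:new_ps_q}. The only thing to keep track of is that the sign $(-1)^{|\lam|-\ell(\lam)}$ coming from $\omega$ is indexed by the partition $\lam'$ that appears in Theorem \ref{thm:athanasiadis} (equivalently, the shape of the pyramid list $P$), rather than by the outer partition $\lam$ indexing the $\ol{p}_\lam$-coefficient; this is why the factor in the final formula appears as $(-1)^{|\lam'|-\ell(\lam')}$.
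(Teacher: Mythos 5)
Your proposal is correct and takes essentially the same approach as the paper, which proves Proposition \ref{prop:old_ps_q} by citing the argument of \S\ref{sec:new_ps_q} (apply $\omega(p_\lam)=(-1)^{|\lam|-\ell(\lam)}p_\lam$ to the $p$-expansion from (\ref{eqn:XG_q_final})) together with the $p$-to-$\ol p$ transition derived in \S\ref{sec:omega_old_ps_q}; you merely spell this out a bit more explicitly. One small label slip: the identity $p_k=\sum_{d\ge1}\frac{\hat\mu(d)}{d}\ln(1+\ol p_{dk})$ that you cite is the unnumbered display following (\ref{eqn:p_transition_2}), not (\ref{eqn:p_transition_2}) itself, but you quote the correct formula so the argument is unaffected.
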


We can relate Proposition \ref{prop:old_ps_q} to Theorem \ref{thm:XG_old_ps} by the same reasoning as before. Setting $q=1,$ taking an alternating sum over $W\se V$, and factoring over the parts of $\lam$ gives $$\ol{X}_G = \sum_{W \se V}(-1)^{|V\bs W|}\frac1{z_\lam}\prod_{i=1}^{\ell(\lam)}(-1)^{\lam_i-1}|\mc{P}_{G|_W}(\lam_i)|\sum_{d\ge 1}\frac{\hat{\mu}(d)}{d}\ln(1+\ol{p}_{d\lam_i}).$$ Then interpreting it in terms of exponentials and reorganizing the terms gives 
\begin{align*}
    \ol{X}_G &= \sum_{W \se V}(-1)^{|V\bs W|}\exp\left(\sum_{j\ge 1}\frac{(-1)^{j+1}}j|\mc{P}_{G|_W}(j)|\sum_{d\ge 1}\frac{\hat{\mu}(d)}{d}\ln(1+\ol{p}_{dj})\right) \\
    &= \sum_{W \se V}(-1)^{|V\bs W|}\exp\left(\sum_{k\ge 1}\ln(1+\ol{p}_{k})\cdot\frac1k\sum_{d\ge 1} (-1)^{k/d+1}\hat{\mu}(d)\left|\mc{P}_{G|_W}\left(\frac kd\right)\right|\right).
\end{align*}
By the M\"obius inversion argument from \S \ref{sec:XG_old_ps}, this becomes $$\ol{X}_G = \sum_{W\se V}(-1)^{|V\bs W|}\exp\left(\sum_{k\ge 1}\ln(1+\ol{p}_k)\cdot a_W(k)\right) = \sum_{W\se V}(-1)^{|V\bs W|}\prod_{k\ge 1}(1+\ol{p}_k)^{a_W(k)},$$ with the $a_W(k)$'s from \S \ref{sec:XG_old_ps}. Thus, we recover the factorization from \S \ref{sec:XG_old_ps} and \S \ref{sec:XG_old_ps_2}, and hence Theorem \ref{thm:XG_old_ps}.

\section*{Acknowledgments}

I thank Oliver Pechenik for the problem suggestion and helpful discussions related to it. I thank Boon Leong Ng for a correction to the list of graphs after Corollary \ref{cor:subgraphs}.

\section*{Conflict of interest statement}

On behalf of all authors, the corresponding author states that there is no conflict of interest.

\section*{Data availability statement}

This research has no associated data.

\printbibliography

@article{lalonde1995lyndon,
  title={Lyndon heaps: an analogue of Lyndon words in free partially commutative monoids},
  author={Lalonde, Pierre},
  journal={Discrete Mathematics},
  volume={145},
  number={1-3},
  pages={171--189},
  year={1995},
  publisher={Elsevier},
  url={https://doi.org/10.1016/0012-365X(94)00032-E}
}

@article{bernardi2020combinatorial,
  title={Combinatorial reciprocity for the chromatic polynomial and the chromatic symmetric function},
  author={Bernardi, Olivier and Nadeau, Philippe},
  journal={Discrete Mathematics},
  volume={343},
  number={10},
  pages={111989},
  year={2020},
  publisher={Elsevier},
  url={https://doi.org/10.1016/j.disc.2020.111989},
  
}

@article{arunkumar2022generalised,
  title={Generalised Chromatic Polynomials of Graphs from Heaps of Pieces},
  author={Arunkumar, G},
  journal={Annals of Combinatorics},
  volume={26},
  number={2},
  pages={345--377},
  year={2022},
  publisher={Springer},
  url={https://link.springer.com/article/10.1007/s00026-022-00571-w}
}

@unpublished{crew2023kromatic,
  title={The Kromatic Symmetric Function: A $ K $-theoretic Analogue of $ X_G$},
  author={Crew, Logan and Pechenik, Oliver and Spirkl, Sophie},
  year={2023},
  url={https://arxiv.org/abs/2301.02177}
}

@article{marberg2023kromatic,
  title={Kromatic quasisymmetric functions},
  author={Marberg, Eric},
  journal={The Electronic Journal of Combinatorics},
  volume={32},
  issue={1},
  number={P1.11},
  year={2025},
  url={https://doi.org/10.37236/13207}
}

@unpublished{pierson2024power,
  title={A power sum expansion for the Kromatic symmetric function},
  author={Pierson, Laura},
  year={2024},
  url={https://arxiv.org/abs/2408.01395}
}

@unpublished{hikita2024proof,
  title={A proof of the Stanley-Stembridge conjecture},
  author={Hikita, Tatsuyuki},
  year={2024},
  url={https://arxiv.org/abs/2410.12758}
}

@unpublished{kato2024geometric,
  title={A geometric realization of the chromatic symmetric function of a unit interval graph},
  author={Kato, Syu},
  year={2024},
  url={https://arxiv.org/abs/2410.12231}
}

@article{stanley1995symmetric,
  title={A symmetric function generalization of the chromatic polynomial of a graph},
  author={Stanley, Richard P},
  journal={Advances in Mathematics},
  volume={111},
  number={1},
  pages={166--194},
  year={1995},
  publisher={Elsevier},
  url={https://doi.org/10.1006/aima.1995.1020}
}

@article{gasharov1996incomparability,
  title={Incomparability graphs of (3+1)-free posets are s-positive},
  author={Gasharov, Vesselin},
  journal={Discrete Mathematics},
  volume={157},
  number={1-3},
  pages={193--197},
  year={1996},
  publisher={Elsevier},
  url={https://doi.org/10.1016/S0012-365X(96)83014-7}
}

@book{cartier2006problemes,
  title={Problemes combinatoires de commutation et r{\'e}arrangements},
  author={Cartier, Pierre and Foata, Dominique},
  volume={85},
  year={2006},
  publisher={Springer},
  url={https://mat.univie.ac.at/~slc/books/cartfoa.pdf}
}

@inproceedings{viennot2006heaps,
  title={Heaps of pieces, I: Basic definitions and combinatorial lemmas},
  author={Viennot, G{\'e}rard Xavier},
  booktitle={Combinatoire {\'e}num{\'e}rative: Proceedings of the “Colloque de combinatoire {\'e}num{\'e}rative”, held at Universit{\'e} du Qu{\'e}bec {\`a} Montr{\'e}al, May 28--June 1, 1985},
  pages={321--350},
  year={2006},
  organization={Springer},
  url={https://link.springer.com/chapter/10.1007/BFb0072524}
}

@unpublished{pierson2024counting,
  title={Counting induced subgraphs with the Kromatic symmetric function},
  author={Pierson, Laura},
  year={2024},
  url={https://arxiv.org/abs/2403.15929}
}

@article{buch2005combinatorial,
  title={Combinatorial K-theory},
  author={Buch, Anders Skovsted},
  journal={Topics in Cohomological Studies of Algebraic Varieties: Impanga Lecture Notes},
  pages={87--103},
  year={2005},
  publisher={Springer},
  url={https://link.springer.com/chapter/10.1007/3-7643-7342-3_3}
}

@article{orellana2014graphs,
  title={Graphs with equal chromatic symmetric functions},
  author={Orellana, Rosa and Scott, Geoffrey},
  journal={Discrete Mathematics},
  volume={320},
  pages={1--14},
  year={2014},
  publisher={Elsevier},
  url={https://doi.org/10.1016/j.disc.2013.12.006}
}

@article{martin2008distinguishing,
  title={On distinguishing trees by their chromatic symmetric functions},
  author={Martin, Jeremy L and Morin, Matthew and Wagner, Jennifer D},
  journal={Journal of Combinatorial Theory, Series A},
  volume={115},
  number={2},
  pages={237--253},
  year={2008},
  publisher={Elsevier},
  url={https://www.sciencedirect.com/science/article/pii/S0097316507000775}
}

@article{heil2018algorithm,
  title={On an algorithm for comparing the chromatic symmetric functions of trees},
  author={Heil, Sam and Ji, Caleb},
  year={2019},
  journal={The Australasian Journal of Combinatorics},
  volume={75},
  number={2},
  pages={210--222},
  url={https://ajc.maths.uq.edu.au/pdf/75/ajc_v75_p210.pdf}
}

@article{aliste2024chromatic,
  title={Chromatic symmetric functions and polynomial invariants of trees},
  author={Aliste-Prieto, Jos{\'e} and Martin, Jeremy L and Wagner, Jennifer D and Zamora, Jos{\'e}},
  journal={Bulletin of the London Mathematical Society},
  volume={56},
  number={11},
  pages={3452--3476},
  year={2024},
  publisher={Wiley Online Library},
  url={https://doi.org/10.1112/blms.13144}
}

@article{wang2024class,
  title={A class of trees determined by their chromatic symmetric functions},
  author={Wang, Yuzhenni and Yu, Xingxing and Zhang, Xiao-Dong},
  journal={Discrete Mathematics},
  volume={347},
  number={9},
  pages={114096},
  year={2024},
  publisher={Elsevier},
  url={https://doi.org/10.1016/j.disc.2024.114096}
}

@article{crew2022note,
  title={A note on distinguishing trees with the chromatic symmetric function},
  author={Crew, Logan},
  journal={Discrete Mathematics},
  volume={345},
  number={2},
  pages={112682},
  year={2022},
  publisher={Elsevier},
  url={https://doi.org/10.1016/j.disc.2021.112682}
}

@article{stevanovic1997clique,
  title={Clique polynomials of threshold graphs},
  author={Stevanovi{\'c}, Dragan},
  journal={Publikacije Elektrotehni{\v{c}}kog fakulteta. Serija Matematika},
  pages={84--87},
  year={1997},
  publisher={JSTOR},
  url={https://www.jstor.org/stable/43666386}
}

@article{levit2008roots,
  title={On the roots of independence polynomials of almost all very well-covered graphs},
  author={Levit, Vadim E and Mandrescu, Eugen},
  journal={Discrete Applied Mathematics},
  volume={156},
  number={4},
  pages={478--491},
  year={2008},
  publisher={Elsevier},
  url={https://doi.org/10.1016/j.dam.2006.06.016}
}

@article{shareshian2016chromatic,
  title={Chromatic quasisymmetric functions},
  author={Shareshian, John and Wachs, Michelle L},
  journal={Advances in Mathematics},
  volume={295},
  pages={497--551},
  year={2016},
  publisher={Elsevier},
  url={https://doi.org/10.1016/j.aim.2015.12.018}
}

@article{alexandersson2021p,
  title={P-partitions and p-positivity},
  author={Alexandersson, Per and Sulzgruber, Robin},
  journal={International Mathematics Research Notices},
  volume={2021},
  number={14},
  pages={10848--10907},
  year={2021},
  publisher={Oxford University Press},
  url={https://doi.org/10.1093/imrn/rnz130}
}

@article{athanasiadis2014power,
  title={Power sum expansion of chromatic quasisymmetric functions},
  author={Athanasiadis, Christos A},
  journal={The Electronic Journal of Combinatorics},
  volume={22},
  issue={2},
  number={P2.7},
  year={2015},
  url={https://doi.org/10.37236/4761}
}

@article{crew2020deletion,
  title={A deletion--contraction relation for the chromatic symmetric function},
  author={Crew, Logan and Spirkl, Sophie},
  journal={European Journal of Combinatorics},
  volume={89},
  pages={103143},
  year={2020},
  publisher={Elsevier},
  url={https://doi.org/10.1016/j.ejc.2020.103143}
}

@article{beaton2018independence,
  title={Independence equivalence classes of paths and cycles},
  author={Beaton, Iain and Brown, Jason I and Cameron, Ben},
  journal={The Australasian Journal of Combinatorics},
  volume={75},
  number={1},
  pages={127--145},
  url={https://ajc.maths.uq.edu.au/pdf/75/ajc_v75_p127.pdf},
  year={2019}
}

\end{document}